\documentclass[12pt]{article}
\usepackage{amsthm,amsfonts,amssymb,amsmath}
\usepackage{stmaryrd}
\usepackage{cite,hyperref}
\usepackage{epsfig}
\usepackage{url}
\usepackage{xcolor,tikz}
\usetikzlibrary{positioning, arrows.meta,calc}
\usetikzlibrary{matrix}
\usepackage{multicol,graphicx}
\usepackage{fullpage}
\usepackage{bbm}
\usetikzlibrary{decorations}
\usepackage{svg}

\usepackage[shortlabels]{enumitem}

\numberwithin{equation}{section}

\newtheorem{thm}[equation]{Theorem}

\newtheorem{lem}[equation]{Lemma}

\newtheorem{conj}[equation]{Conjecture}

\theoremstyle{definition}
\newtheorem{defn}[equation]{Definition}

\newtheorem{const}[equation]{Construction}
\newtheorem*{ack}{Acknowledgements}

\theoremstyle{remark}
\newtheorem{case}{Case}
\newtheorem{casee}{Case}
\newtheorem{caseee}{Case}

\title{Optimal and Near-Optimal Constructions for Bootstrap Percolation in Hypercubes}
\author{Jonathan A. Noel\thanks{Department of Mathematics and Statistics, University of Victoria, Victoria, B.C., Canada. E-mail: {\tt noelj@uvic.ca}. Research supported by NSERC Discovery Grant RGPIN-2021-02460.}}

\DeclareTextCompositeCommand{\v}{OT1}{l}{l\nobreak\hspace{-.1em}'}
\DeclareTextCompositeCommand{\v}{OT1}{t}{t\nobreak\hspace{-.1em}'\nobreak\hspace{-.15em}}

\begin{document}

\maketitle

\begin{abstract}
The \emph{$r$-neighbour bootstrap process} on a graph $G$ begins with a set of infected vertices; subsequently, healthy vertices become infected once they have at least $r$ infected neighbours. The central extremal problem in bootstrap percolation is to determine the minimum cardinality of an initial infected set that eventually spreads to all vertices of $G$, denoted $m(G;r)$. Morrison and Noel~\cite{MorrisonNoel18} established a general lower bound on $m(Q_d;r)$, where $Q_d$ is the $d$-dimensional hypercube, and asked whether it is tight whenever $d$ is sufficiently large with respect to $r$. This question was answered affirmatively for $r\leq 3$. In this paper, we show that $m(Q_d;4)=\frac{d(d^2+3d+14)}{24}+1$, matching the bound in~\cite{MorrisonNoel18}, for infinitely many $d$. We also obtain, for general $d$, an upper bound on $m(Q_d;4)$ that differs from the Morrison--Noel lower bound by an additive $O(d)$ term. Several key constructions in this paper were obtained with the assistance of AlphaEvolve~\cite{Novikov25}.
\end{abstract}

\section{Introduction}

The \emph{$r$-neighbour bootstrap process} on a graph $G$ starts with a set $A_0$ of infected vertices (with all other vertices initially healthy) and, in each round, a healthy vertex becomes infected if it has at least $r$ infected neighbours. That is, if $A_0$ is the set of infected vertices at time zero, then 
\[A_t:=A_{t-1}\cup\{v\in V(G): |N_G(v)\cap A_{t-1}|\geq r\}\]
is the set of vertices infected after time $t$ for any $t\geq1$, where $N_G(v)$ is the neighbourhood of $v$ in $G$. We say that $A_0$ \emph{percolates} if $\bigcup_{t=0}^\infty A_t=V(G)$.

Despite its simple definition, the bootstrap process leads to many beautiful and challenging mathematical problems. One major line of research concerns estimating the \emph{critical probability} for bootstrap percolation, defined as the infimum over all $p$ for which a random set of density $p$ percolates with probability at least $1/2$, in square grids of various dimensions~\cite{BaloghBollobas06,vanEnter87,BaloghBollobasDCMorris12,AizenmanLebowitz88,BaloghBollobasMorris09a,BaloghBollobasMorris09b,BaloghBollobasMorris10,CerfCirillo99,CerfManzo02,GravnerHolroydMorris12,Holroyd03,HartarskyMorris19,Uzzell19}.

Our focus is on the well-studied extremal problem of determining the minimum cardinality of a percolating set under the $r$-neighbour bootstrap process in a graph $G$, which we denote by $m(G;r)$; see, e.g.,~\cite{MiralaeiMohammadianTR26,HambardzumyanHatamiQian20,DukesNoelRomer23,BenevidesBermondLesfariNisse24,PrzykuckiShelton20,Pete97}. A well-known exercise which appears in many puzzle books, e.g.~\cite[Problem~34]{Bollobas06} and~\cite[p.~79]{Winkler04}, asks one to prove that
\[m([n]^2;2)=n\]
where $[n]^d$ denotes the $n\times n\times\cdots \times n$ grid in $d$ dimensions.\footnote{If you do not know the solution, we recommend pausing here to enjoy the puzzle before reading further.} According to~\cite{BaloghMR}, it originated in the Soviet popular science magazine \emph{Kvant} in the mid-1980s. It was later brought to Szeged by Lajos Pint\'er who encouraged G\'abor Pete to write an undergraduate thesis on it~\cite{Pete97}, and it has continued to percolate\footnote{Pun intended.} throughout  mathematics ever since.

In this paper, we focus on the \emph{$d$-dimensional hypercube} $Q_d$; i.e. the graph with vertex set $\{0,1\}^d$ in which two vertices are adjacent if they differ in exactly one coordinate. Balogh and Bollob\'as~\cite[Lemma~7(ii)]{BaloghBollobas06} proved that 
\begin{equation}
\label{eq:r=2}
m(Q_d;2)=\lceil d/2\rceil+1
\end{equation}
for all $d\geq2$ and conjectured\footnote{While this conjecture does not actually seem to appear in~\cite{BaloghBollobas06}, a later paper of Balogh, Bollob\'as, Morris and Riordan~\cite{BaloghBollobasMorrisRiordan12} attributes it to~\cite{BaloghBollobas06}. A weaker conjecture also appears in~\cite{BaloghBollobasMorris10}.} that
\begin{equation}
\label{eq:BBConj}
m(Q_d;r)=(1+o(1))\frac{d^{r-1}}{r!}
\end{equation}
for each fixed $r\geq1$ and large $d$. The following theorem of Morrison and Noel~\cite{MorrisonNoel18} settled the aforementioned conjecture of Balogh and Bollob\'as~\cite{BaloghBollobas06}.

\begin{thm}[Morrison and Noel~\cite{MorrisonNoel18}]
\label{th:MN}
For $d\geq r\geq1$,
\[m(Q_d;r)\geq 2^{r-1}+\sum_{j=1}^{r-1}\binom{d-j-1}{r-j}\frac{j2^{j-1}}{r}.\]
\end{thm}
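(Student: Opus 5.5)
The plan is to follow the linear-algebraic (polynomial) method of Morrison and Noel, which generalises the Balogh--Bollob\'as--Morris--Riordan approach. I will construct a vector space $W$ of functions on $V(Q_d)=\{0,1\}^d$, or equivalently on subcubes, with three properties. First, $W$ contains a spanning set indexed by the vertices. Second, if $A_0$ percolates in the $r$-neighbour process, then the vectors indexed by $A_0$ span $W$. Third, $\dim W$ equals the right-hand side of the bound. Then $|A_0|\geq \dim W$.

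Concretely, consider the space $\mathcal{F}$ of all real functions $f$ on $\{0,1\}^d$. Impose the condition that for every subcube $Q$ of dimension $r$ (every copy of $Q_r$ in $Q_d$), $f$ satisfies a prescribed linear relation over the vertices of $Q$. The natural choice is that $f$ restricted to any $Q_r$-subcube is orthogonal to the \emph{parity character}: $\sum_{x\in Q}(-1)^{|x|}f(x)=0$. More generally, I would use a slightly weighted relation chosen so that the dimension count works out. I will take $W$ to be the dual of this: the span of the evaluation functionals $\delta_v$ modulo the relations generated by the $r$-subcubes.

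The key percolation step is as follows. Suppose a healthy vertex $v$ has $r$ infected neighbours $u_1,\dots,u_r$. These determine an $r$-subcube $Q\ni v$, and every other vertex of $Q$ must be shown already to lie in the span. Here I would proceed by induction, exactly as in the graded-relation argument: the relation on $Q$ expresses $\delta_v$ as a combination of the functionals of the other $2^r-1$ vertices of $Q$. To avoid needing all of $Q$ infected, the relation must instead be arranged in terms of lower-dimensional subcubes through $v$. So I will define $W$ as a space of functionals on subcubes of all dimensions $\le r-1$, with the rule that the $j$ neighbours of $v$ in a $j$-subcube determine it. This is precisely where the weights $j2^{j-1}/r$ should arise.

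The dimension computation is the core of the argument. I would count a basis for the space of functions $f$ on vertices, and their extensions to subcubes, satisfying the relations. This is done by filtering by the ``direction set'' of each subcube: fix the first coordinates, and count subcubes of dimension $r-j$ whose directions lie above index $j+1$. This gives the term $\binom{d-j-1}{r-j}$. The term $2^{r-1}$ comes from a base $Q_r$-type contribution.

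The main obstacle is choosing the right space and proving that its dimension is exactly the stated sum, i.e.\ that the proposed basis is linearly independent. The fractional weights $j2^{j-1}/r$ indicate that a pure dimension count will not suffice. I expect to need a weighted or averaged argument instead: average over the $r$ neighbours used to infect a vertex, or sum several dimension bounds, one for each coordinate ordering, and divide by $r$. My first attempt would therefore prove, for each ordering of coordinates, a lower bound on a weighted count of infection steps, and then average over orderings to obtain the fractional coefficients.
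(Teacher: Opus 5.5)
Your proposal has a genuine gap, and it sits exactly at what you call the main obstacle. (The paper only quotes this theorem from Morrison and Noel, and their argument is edge-based rather than vertex-based.) First, the space $W$ is never defined. Your one concrete candidate is the space of functions with $\sum_{x\in Q}(-1)^{|x|}f(x)=0$ on every $r$-subcube $Q$. That is exactly the space of multilinear polynomials of degree $<r$, of dimension $\sum_{j<r}\binom{d}{j}$. A spanning argument based on it would ``prove'' $m(Q_d;r)\ge\sum_{j<r}\binom{d}{j}$, which is false: for $r=2$ it gives $d+1>\lceil d/2\rceil+1$. You noticed the reason yourself---$r$ infected neighbours of $v$ do not make the rest of the subcube infected. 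But the repair (``functionals on subcubes of dimension $\le r-1$, with the rule that the $j$ neighbours of $v$ in a $j$-subcube determine it'') is not a definition, so neither the spanning property nor the dimension count is established. Second, your proposed origin of the coefficients $j2^{j-1}/r$ has no content. No weighted count of infection steps is specified. Moreover, if each coordinate ordering $\sigma$ gives a valid bound $|A_0|\ge B_\sigma$, averaging over $\sigma$ cannot beat $\max_\sigma B_\sigma$, and the coordinate symmetry of $Q_d$ makes all $B_\sigma$ equal anyway. With one vector per vertex you only ever get $|A_0|\ge\dim W$; the bookkeeping must instead charge $r$ dimensions to each vertex of $A_0$.

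The missing idea is to pass from vertices to edges, i.e.\ to weak saturation. If $A_0$ percolates, let $F$ consist of $r$ edges at each vertex of $A_0$. Then $F$ is weakly $K_{1,r+1}$-saturated in $Q_d$. When $v$ is infected by $u_1,\dots,u_r$, each $u_i$ already has degree at least $r$ in the current graph. So each edge $u_iv$ completes a new copy of $K_{1,r+1}$ centred at $u_i$ and can be added, after which $v$ has degree at least $r$. Once all vertices have degree at least $r$, every remaining edge can be added. Hence $r\,m(Q_d;r)\ge\mathrm{wsat}(Q_d,K_{1,r+1})$, and this is the source of the division by $r$. Linear-algebraically, you assign vectors $f_e$ to edges so that at every vertex any $r$ incident vectors span all of them. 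Each vertex then carries a space of dimension at most $r$, and you need the span of all $f_e$ to have dimension at least $r$ times the right-hand side, namely $w(d,r):=\sum_{j=0}^{r-1}(r-j)\binom{d}{j}=\sum_{k=1}^{r}\sum_{j<k}\binom{d}{j}$. This quantity satisfies $w(d,r)=w(d-1,r)+w(d-1,r-1)$ with $w(d,1)=1$ and $w(r,r)=r2^{r-1}=|E(Q_r)|$; the latter holds because no edge of the $r$-regular graph $Q_r$ can ever be added. Unrolling the recursion down to the diagonal values $w(j,j)=j2^{j-1}$ gives exactly $r2^{r-1}+\sum_{j=1}^{r-1}\binom{d-j-1}{r-j}j2^{j-1}$, which explains the shape of the statement. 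Note also that $w(d,r)$ is a sum of $r$ polynomial-space dimensions whose largest summand is your parity space, which on its own is not a valid lower bound. So the theorem genuinely needs this joint edge-based argument, not an average of separately valid vertex-based bounds.
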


In this paper, we focus on $m(Q_d;4)$. Our main result shows that the lower bound in Theorem~\ref{th:MN} is tight for infinitely many dimensions $d$.

\begin{thm}
\label{th:main}
For every $d\geq4$ such that either $d=4$, $6\leq d\leq 15$ or $d\equiv 0,4\bmod 6$,
\[m(Q_d;4) = \frac{d(d^2+3d+14)}{24}+ 1.\]
\end{thm}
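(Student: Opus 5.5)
The lower bound is immediate from Theorem~\ref{th:MN} with $r=4$. The bound is $8+\frac14\binom{d-2}{3}+\binom{d-3}{2}+3(d-4)$, and a routine expansion shows it equals $\frac{d(d^2+3d+14)}{24}+1$; the case $d=4$ gives $8$ on both sides, which serves as a sanity check. So all the work is in the upper bound: for each admissible $d$ we must exhibit a set $A_0\subseteq V(Q_d)$ of exactly this size that percolates in the $4$-neighbour process.

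The plan is induction on $d$ in steps of $6$. The base cases $d=4$ and $6\le d\le 15$ come from explicit sets found by computer search (AlphaEvolve), each verified by simulating the process. The residues $d\equiv 0,4\bmod 6$ with $d\ge 16$ are then reached from the bases $10,12,\dots$ and $d=4$ by the step $d\to d+6$.

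For the inductive step I would write $Q_{d+6}=Q_d\times Q_6$ and prove a gluing lemma of the following kind. Suppose $A\subseteq Q_d$ percolates under $r=4$ and has some auxiliary structure: certain coordinate subcubes are infected early, and certain points lie in prescribed positions. Then $A\times\{0\}$, together with a carefully chosen extra set $B$, percolates in $Q_{d+6}$. The mechanism is the usual one. Once $Q_d\times\{x\}$ is fully infected, every vertex in a neighbouring copy $Q_d\times\{y\}$ already has one infected neighbour. That copy therefore only needs a set percolating under the $3$-neighbour process, or under the $2$-neighbour process once two neighbouring copies are infected, and so on. The explicit optimal constructions for $r\le 3$ (the known tight cases, and~\eqref{eq:r=2}) supply these pieces.

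Sets of size $\approx\frac{d^2}{6}$ are needed in copies adjacent to one infected copy. Sets of size $O(d)$ are needed in copies adjacent to two infected copies. Only $O(1)$ points are needed further out. Counting, $f(d+6)-f(d)$ is a quadratic in $d$, and the budget must be distributed exactly among the $63$ nonzero copies indexed by $Q_6$. The requirement that the $\mathbb{Z}_6$ periodicity works out is presumably what forces the restriction $d\equiv 0,4\bmod 6$.

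The main obstacle is exactness. Any slack of even one vertex per step accumulates and destroys optimality. So the invariant carried through the induction must be strong enough that every lower-order piece is itself optimal and sits compatibly with the previous one. I would first try to identify the invariant from the base constructions: for instance, that a specific $Q_{d-1}$ or $Q_{d-2}$ subcube becomes infected using only part of $A$. I would then check by hand, using the $Q_6$ pattern of the $d=10\to16$ step found by search, that the invariant regenerates at dimension $d+6$.
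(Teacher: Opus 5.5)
Your lower-bound computation and your treatment of the base cases agree with the paper. The inductive step, however, has a genuine gap: with the mechanism you describe it cannot exist. Write $f(d):=\frac{d(d^2+3d+14)}{24}+1$. If one copy of $Q_d$ carries the $4$-neighbour set and the other copies indexed by $Q_6$ carry optimal $r$-neighbour sets with $r\le 3$, the extra budget is
\[f(d+6)-f(d)=\tfrac34 d^2+6d+17 .\]
Each copy carrying a $3$-neighbour set costs $\frac{d^2}{6}+O(d)$, and every other copy costs $O(d)$. Matching the $d^2$ terms therefore needs exactly $9/2$ such copies. Any integer number either overshoots by order $d^2$ or would undercut Theorem~\ref{th:MN}. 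More generally, a step $d-k\to d$ needs $3k/4$ copies carrying $3$-neighbour sets, so $4\mid k$.

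There is a second obstruction. The lower-order blocks have their exact sizes $\frac m2+1$ and $\frac{m(m+3)}6+1$ only when the base dimension $m$ is $\equiv 0 \bmod 6$. For example, your step $10\to16$ would use $|S_3^{10}|=23>\frac{10\cdot 13}{6}+1$. This divisibility is what forces $d\equiv 0,4\bmod 6$, not a $\mathbb{Z}_6$-periodicity of a $Q_6$ pattern. Beyond this, your gluing lemma, invariant and $Q_6$ pattern are all left unspecified, so the proposal is a search plan rather than a proof.

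The paper instead uses steps $k=4$ and $k=12$ from a base $d-k\equiv 0\bmod 6$. It applies Lemma~\ref{lem:meta} with the labelings $\ell_0^4$ (counts $1,3,3,1$ for labels $1,2,3,4$) and $\ell_0^{12}$ (counts $55,33,9,1$). For these, $f(d)-f(d-k)=n_3|S_3^{d-k}|+n_2|S_2^{d-k}|+n_1$ holds exactly.

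Two ingredients missing from your plan make this work. First, the meta process has a Promotion rule as well as Completion. A copy $x$ receives $S_j^x$ for free once four neighbouring copies have label at least $j$, because each vertex of $S_j^x$ is adjacent to its translates in those copies. This is what lets only $3k/4$ copies carry $3$-neighbour sets. In $\ell_0^4$, for instance, the unique label-$4$ copy has only label-$0$ neighbours, so Completion alone never starts. Second, Promotion requires the nesting $S_1\subseteq S_2\subseteq S_3$, which is why the paper reproves the $r=3$ result in nested form (Lemma~\ref{lem:r=3}).

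Conversely, no invariant on the $4$-neighbour set is needed. $S_4^{d-k}$ is used as a black box in a single copy, so the self-regenerating invariant you propose to search for is unnecessary.
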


We also obtain constructions within only a linear difference from the lower bound in Theorem~\ref{th:MN} for all dimensions $d\geq4$. We did not attempt to optimize the constant in the linear term; it may be possible to reduce it via a more careful analysis.

\begin{thm}
\label{th:linearDiff}
We have
\[m(Q_5;4)\leq 14\]
and, for all $d\geq6$,
\[m(Q_d;4) \leq \left\lceil\frac{d(d^2+3d+14)}{24}\right\rceil + 1 + 20\left\lfloor \frac{d-4}{12}\right\rfloor.\]
\end{thm}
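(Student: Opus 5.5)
The plan is a recursive construction that peels off low-dimensional ``slabs'' of $Q_d$ and infects them in stages, together with a computer-assisted base case for $Q_5$.

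\textbf{Base case.} The bound $m(Q_5;4)\leq 14$ is witnessed by an explicit set $A_0\subseteq\{0,1\}^5$ of size $14$. I would exhibit this set and check directly, by simulating the process, that it percolates.

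\textbf{Recursive tool.} Write $Q_d=Q_{d-k}\times Q_k$ and think of $Q_d$ as $2^k$ copies of $Q_{d-k}$ indexed by the vertices of $Q_k$. Suppose the copy indexed by $0$ has been fully infected. Then every vertex in a copy indexed by a weight-one vertex of $Q_k$ already has one infected neighbour, so inside that copy the process behaves like a $3$-neighbour process. Continuing inductively, a copy whose index has weight $i$ sees roughly $i$ infected neighbours from earlier copies, so it needs only an $(r-i)$-neighbour percolating set. This reduces the problem to $m(Q_{d-k};r')$ for $r'\leq 3$, and the paper notes these values are known exactly and match Theorem~\ref{th:MN}.

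Step $6\to 6$ is what makes the sizes close up. I would take $k=6$ and use Theorem~\ref{th:main} for $d\equiv 4\bmod 6$ (and its small cases) as the engine. Concretely, I would prove a lemma of the form
\[m(Q_{d+6};4)\leq m(Q_d;4)+\Delta(d),\]
where $\Delta(d)$ is exactly the increment of the cubic $\frac{d(d^2+3d+14)}{24}$ from $d$ to $d+6$. This would be done by infecting the extra directions through sets built from optimal $3$-, $2$- and $1$-neighbour percolating sets of the subcubes.

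For the general statement, write $d=d_0+12q$ with $d_0$ in a range of base dimensions covered by Theorem~\ref{th:main} (namely $4$ and $6$--$15$, plus $Q_5$). Each block of $12$ extra dimensions is handled by a step that is slightly lossy when the optimal $r'$-neighbour sets for the intermediate dimensions do not exist with exact parity. I would bound this loss by a constant $20$ per block. That gives the $20\lfloor (d-4)/12\rfloor$ term, and the ceiling absorbs the non-integrality of the cubic.

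\textbf{Main obstacle.} The hard part is the gluing step. Copies of $Q_{d-k}$ with the same weight can also infect each other, and the vertices that receive exactly $4-i$ help must be precisely the ones placed in the lower-threshold percolating sets. Getting the counts to telescope exactly to the cubic requires structured ``extremal'' $3$-neighbour sets, and finding these is presumably where AlphaEvolve was used. I would first try to identify in the cited $r=3$ constructions a percolating set whose closure under the combined process is controllable. Failing that, I would pay an additive constant per step, which is precisely what Theorem~\ref{th:linearDiff} tolerates.
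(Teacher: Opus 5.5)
Your proposal has a genuine gap. The recursive tool you describe cannot reach the bound, and its loss per step is quadratic in $d$, not an additive constant.

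\textbf{The layering step loses at order $d^2$.} In your weight-layering scheme, the copy indexed by a weight-$i$ vertex of $Q_k$ gets an optimal $(4-i)$-neighbour set and relies only on the $i$ fully infected copies below it. So each of the $k$ weight-one copies needs a $3$-neighbour set, and each step costs at least $k\,m(Q_{d-k};3)\approx kd^2/6$. Writing $f(d)=\frac{d(d^2+3d+14)}{24}$, the target grows only by $f(d)-f(d-k)\approx kd^2/8$. Already for $k=1$ the step costs $m(Q_{d-1};3)\geq\frac{d^2+d+4}{6}$, against $f(d)-f(d-1)=\frac{d^2+d+4}{8}$. Your fallback of ``paying an additive constant per step'' is therefore unavailable, and iterating yields only $m(Q_d;4)\lesssim d^3/18$.

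\textbf{The missing idea.} To match the $d^2$ coefficient, exactly $3k/4$ of the subcubes may carry $3$-neighbour sets. This forces subcubes that are \emph{not} fully infected to help one another. The paper does this with the $4$-meta bootstrap process of Lemma~\ref{lem:meta}. Its Promotion rule lets a subcube with four neighbours of label at least $j$ acquire $S_j$ for free. This is why the nesting $S_1\subseteq S_2\subseteq S_3$ of Lemma~\ref{lem:r=3} is needed. The other ingredient is an explicit percolating labeling. The paper's $\ell_0^{12}$ has $1,9,33,55$ vertices of labels $4,3,2,1$. For $e=d-12\equiv 0\pmod 6$ it adds exactly $9\bigl(\frac{e(e+3)}{6}+1\bigr)+33\bigl(\frac{e}{2}+1\bigr)+55=\frac{3}{2}e^2+21e+97=f(e+12)-f(e)$. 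For other $e$, the only loss comes from the ceilings in $|S_3^e|$, $|S_2^e|$ and $\lceil f(e)\rceil$, at most $\frac{81}{4}$ in total. That is the source of the $20$ per block of $12$. The hard object is this labeling, not special $3$-neighbour sets; the latter are just optimal nested ones.

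\textbf{Your exact $6$-step lemma is too strong.} A lemma $m(Q_{d+6};4)\le m(Q_d;4)+\Delta(d)$ with exact increment, combined with the seeds $6\le d\le 11$, would give $m(Q_d;4)=\lceil f(d)\rceil+1$ for every $d\geq 6$. That is the conjecture the paper leaves open. Within the framework of Lemma~\ref{lem:meta} it is also impossible for $k=6$, since matching the $d^2$ coefficient would require exactly $3k/4=9/2$ subcubes of label $3$. This is why the paper uses $k=4$ and $k=12$, with $3$ and $9$ such subcubes.

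\textbf{The $Q_5$ seed breaks $d=17$.} Even granting a $12$-step with loss at most $20$, seeding the class $d\equiv 5\pmod{12}$ with $Q_5$ fails. The bound $m(Q_5;4)\le 14$ already exceeds $\lceil f(5)\rceil+1=13$. Applying $\ell_0^{12}$ over $Q_5$ gives $14+9\cdot 8+33\cdot 4+55=273>272$ at $d=17$. The paper instead handles $d=17$ by applying $\ell_0^4$ to the $Q_{13}$ seeds, giving $122+3\cdot 36+3\cdot 8+1=255$. It then runs the step-$12$ induction from the bases $6\le d\le 17$, with $d=16$ supplied by Theorem~\ref{th:main}.
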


The high-level strategy for proving our main theorems is to begin with a few ad hoc constructions in small dimensions and use them to build optimal $4$-neighbour percolating sets in higher dimensions. The key idea is to partition $Q_d$ into a ``cube of subcubes'' and place optimal percolating sets for the $r$-neighbour bootstrap process, with $r\in\{1,2,3,4\}$, into carefully chosen subcubes. Because we have only a very small budget of initially infected vertices, these placements must be chosen carefully to ensure that the infection spreads as efficiently as possible. We also require that the constructions for $r=2$ are nested within those for $r=3$. Consequently, in addition to proving our results on the case $r=4$, we also reprove the result of~\cite[Theorem~1.4]{MorrisonNoel18} for $r=3$ in a form that ensures the required nesting of the constructions.

We developed the overall strategy described above by extending ideas in~\cite[Section~5]{MorrisonNoel18}, but the specific ``building block'' constructions\footnote{Specifically, the constructions in Sections~\ref{sec:constructions3},~\ref{sec:constructionsSeed4} and~\ref{sec:constructionsMeta4}.} needed to implement it were discovered with the assistance of the AlphaEvolve~\cite{Novikov25} system developed by Google DeepMind. AlphaEvolve explored a wide range of search heuristics autonomously, without requiring us to suggest specific approaches or supervise the search process. Our role was to formulate the global strategy, guide the search via prompts, and generate code (see this \href{https://colab.research.google.com/drive/1_owaZNVjQTtzwKwdJ38Z51VGLKjKb-Fd?usp=sharing}{Google Colab Notebook} and the ancillary file \texttt{FourNeighbourBootstrapVerification.ipynb} uploaded with the arxiv version of this paper) to independently verify all constructions produced by AlphaEvolve. 

In the next subsection, we provide context by discussing some of the existing constructions in the literature and how they compare to the lower bound in Theorem~\ref{th:MN}. In Section~\ref{sec:strategy}, we outline our main approach and describe the types of constructions that we require to prove Theorems~\ref{th:main} and~\ref{th:linearDiff}. The main constructions, together with proofs that they have the required properties, are provided in Sections~\ref{sec:constructions3},~\ref{sec:constructionsSeed4} and~\ref{sec:constructionsMeta4}. We put all of these pieces together to derive the main theorems in Section~\ref{sec:proofs}.

\subsection{Prior Constructions}
\label{sec:pastWork}

The purpose of this subsection is to compare Theorems~\ref{th:main} and~\ref{th:linearDiff} to the existing upper bounds in the area. The conjecture \eqref{eq:BBConj} of~\cite{BaloghBollobas06} was inspired by the following construction (see~\cite[p.~1330]{BaloghBollobasMorrisRiordan12}).

\begin{const}
\label{const:Steiner}
Let $d\geq2r-3$ and take $A_0\subseteq V(Q_d)$ to consist of all vertices whose coordinates sum to $r-2$ (i.e.\ vertices on \emph{layer} $r-2$), together with a set of $\frac{1+o(1)}{r}\binom{d}{r-1}$ vertices on layer $r$ whose neighbourhoods cover all vertices on layer $r-1$. The existence of such a set follows from R\"odl's theorem on approximate Steiner systems~\cite{Rodl85}. Every vertex on layer $r-1$ has $r$ neighbours in $A_0$ and is therefore infected after one step. Once layers $r-2$ and $r-1$ are all infected, the infection will spread to layers $r,r+1,\dots$ as well as layers $r-3,r-4,\dots$, sequentially. In the best case, i.e. when an exact Steiner system exists, this construction has cardinality $\frac{1}{r}\binom{d}{r-1}+\binom{d}{r-2}$, which is
\begin{equation}
\label{eq:SteinerUpper}
\frac{d^{r-1}}{r!} + \left(\frac{r+2}{2r(r-2)!}\right)d^{r-2} + O(d^{r-3}).
\end{equation}
\end{const}

By expanding the right side of the lower bound in Theorem~\ref{th:MN}, we see that it yields
\begin{equation}
\label{eq:MN}
m(Q_d;r)\geq \frac{d^{r-1}}{r!} + \left(\frac{6-r}{2r(r-2)!}\right)d^{r-2} + O(d^{r-3}).
\end{equation}
For $r=2$, both the first and second terms of \eqref{eq:MN} match with \eqref{eq:SteinerUpper} and with the exact result \eqref{eq:r=2}. However, for $r\geq3$, there is a gap between the second-order terms of \eqref{eq:MN} and \eqref{eq:SteinerUpper}. In the case $r=3$,  Morrison and Noel~\cite{MorrisonNoel18} provided an upper bound construction which showed that Theorem~\ref{th:MN} is tight; in particular
\begin{equation}\label{eq:r=3}m(Q_d;3)=\left\lceil \frac{d(d+3)}{6}\right\rceil +1\end{equation}
for all $d\geq3$. This led them to ask whether, for all $r\geq2$, there exists $d_0(r)$ such that the lower bound in Theorem~\ref{th:MN} is tight for all $d\geq d_0(r)$. B\'erczi and Wagner~\cite{BercziWagner24} recently provided the following upper bound construction which improves Construction~\ref{const:Steiner} and gets close to the lower bound in \eqref{eq:MN} for $d$ and $r$ satisfying certain conditions. 

\begin{const}
\label{const:BW}
For $r\geq3$, let $d\geq 2r-3$ such that there exists a set of vertices of cardinality $\frac{1}{r}\binom{d}{r-1}$ on layer $r$ of $Q_d$ whose neighbourhoods cover every vertex on layer $r-1$ exactly once. Take $A_0$ to consist of all vertices of the aforementioned set on layer $r$ together with all vertices of $Q_d$ on layer $r-3$ and a set of $\frac{2+o(1)}{r-1}\binom{d}{r-2}$ vertices on layer $r-1$ whose neighbourhoods cover every vertex on layer $r-2$ at least twice; again, such a set exists by the result of~\cite{Rodl85}. The cardinality is 
\begin{equation}
\label{eq:BW}
\frac{d^{r-1}}{r!} + \left(\frac{6-r}{2r(r-2)!} + \frac{2}{r!}\right)d^{r-2} + O(d^{r-3}).
\end{equation}
\end{const}

Thus, when $r\geq3$, Construction~\ref{const:BW} improves upon Construction~\ref{const:Steiner} in the second-order asymptotic term. However, it still does not match \eqref{eq:MN} and, like Construction~\ref{const:Steiner}, it has the disadvantage that it depends on the existence of exact Steiner systems. B\'erczi and Wagner~\cite{BercziWagner24} also used a generative AI approach known as PatternBoost~\cite{Charton+24} to search for small percolating sets in $Q_d$ for the case $r=4$. They obtained improved upper-bound constructions that match the lower bound of Theorem~\ref{th:MN} for $d=7,8,13$, come within $1$ of the bound for $d=9,11,12$ and within $2$ of the bound for $d=10$.

\section{General Strategy}
\label{sec:strategy}

We build up percolating sets for the $4$-neighbour bootstrap process on $Q_d$ by inserting optimal constructions for the $i$-neighbour bootstrap process into smaller subcubes of $Q_d$ and analyzing the interactions between them. For this, it is useful to introduce the following auxiliary propagation process.

\begin{defn}
For a graph $G$ and $r\geq1$, we define the \emph{$r$-meta bootstrap process} on $G$ as follows. We start with an initial labeling $\ell_0:V(G)\to \{0,\dots,r\}$. The labeling is then updated according to the following two rules:
\begin{itemize}
    \item \textbf{Rule 1 (Completion):} If a vertex of label $i$ has at least $r-i$ neighbours of label $r$, then its label is updated to $r$. 
    \item \textbf{Rule 2 (Promotion):} If a vertex $v$ has at least $r$ neighbours of higher label than its own, then the label of $v$ is updated to the $r$th highest label among its neighbours. 
\end{itemize}
The process terminates when no vertex satisfies the conditions of Rule 1 or Rule 2. 
\end{defn}

Observe that, due to the monotonicity of the rules, the final labeling is independent of the order in which the rules are applied. We say that a labeling $\ell_0$ \emph{percolates} under the $r$-meta bootstrap process on $G$ if every vertex eventually receives label $r$.

The next lemma describes how we apply percolating labelings for the $r$-meta bootstrap process on a hypercube of fixed dimension to obtain small percolating sets for the $r$-neighbour bootstrap process in larger hypercubes. The following notation is useful for describing the proof. For $d>k\geq 1$ and a vertex $x=(x_1,\dots,x_k)$ of $Q_k$, let $Q_d^{x}$ be the subgraph of $Q_d$ consisting of all vertices $v$ such that the projection of $v$ onto its first $k$ coordinates is equal to $x$. It is not hard to see that $Q_d^{x}$ is isomorphic to $Q_{d-k}$. Also, given a set $S\subseteq V(Q_{d-k})$ and a vertex $x$ of $Q_k$, let $S^{x}$ be the set in $V(Q_d)$ obtained by prefixing each vector in $S$ with $x$. For example, if $d=5,k=2$ and $S=\{(1,1,1),(0,1,0)\}$, then $S^{(1,0)}=\{(1,0,1,1,1),(1,0,0,1,0)\}$. The idea behind this lemma is very much inspired by the approach in~\cite[Section~5]{MorrisonNoel18}.

\begin{lem}
\label{lem:meta}
Let $r\geq2$, $k\geq1$ and $d\geq r+k$. If $S_1,S_2,\dots,S_r$ are subsets of $V(Q_{d-k})$ and $\ell_0:V(Q_k)\to\{0,1,\dots,r\}$ are such that
\begin{enumerate}[(a)]
    \item\label{eq:blocksPercolate} $S_i$ percolates with respect to the $i$-neighbour bootstrap process on $Q_{d-k}$ for all $1\leq i\leq r$, 
    \item\label{eq:labelingPercolates} $\ell_0$ percolates with respect to the $r$-meta bootstrap process on $Q_k$, and
    \item\label{eq:nested} $S_1\subseteq S_2\subseteq\cdots \subseteq S_{r-1}$,
\end{enumerate}
then the set 
\[A_0:=\bigcup_{i=1}^r\left(\bigcup_{x\in\ell_0^{-1}(i)}S_i^x\right)\]
percolates with respect to the $r$-neighbour bootstrap process on $Q_d$. Consequently,
\[m(Q_d;r)\leq \sum_{i=1}^r|\ell_0^{-1}(i)|\cdot |S_i|.\]
\end{lem}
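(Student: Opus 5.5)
We will prove the lemma by establishing an invariant that ties the meta process on $Q_k$ to the actual bootstrap process on $Q_d$. For a labeling $\ell$ of $V(Q_k)$ reached during the $r$-meta process started at $\ell_0$, we claim that the closure $\langle A_0\rangle$ of $A_0$ under the $r$-neighbour process on $Q_d$ satisfies the following for every $x\in V(Q_k)$:
\begin{itemize}
\item if $\ell(x)=r$, then $\langle A_0\rangle$ contains all of $Q_d^x$;
\item if $1\le \ell(x)=i\le r-1$, then $\langle A_0\rangle\supseteq S_i^x$;
\item if $\ell(x)=0$, nothing is required.
\end{itemize}
At time zero this holds by construction. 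For a label-$r$ vertex it holds because $S_r$ percolates in $Q_{d-k}$ under the $r$-neighbour process, and every vertex of $Q_d^x$ has its $d-k$ neighbours inside $Q_d^x$. Once the invariant is shown to survive each application of Rule 1 and Rule 2, condition (b) gives that every $x$ eventually has label $r$. Hence $\langle A_0\rangle=V(Q_d)$. The cardinality bound is then immediate, since $|S_i^x|=|S_i|$ and label-$0$ vertices contribute nothing.

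\emph{Rule 1.} Suppose $\ell(x)=i$ and $x$ has at least $r-i$ neighbours $y$ with $\ell(y)=r$. Each vertex $v^x\in Q_d^x$ is adjacent to the copy $v^y\in Q_d^y$ for each such $y$, so $v^x$ receives $r-i$ infected neighbours from outside $Q_d^x$. Therefore, restricted to $Q_d^x$, infection proceeds as the $i$-neighbour process on $Q_{d-k}$ started from $S_i$. By (a), it fills $Q_d^x$. For $i=0$, every vertex of $Q_d^x$ already has $r$ infected neighbours. So $Q_d^x\subseteq\langle A_0\rangle$, as required for the new label $r$.

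\emph{Rule 2.} Suppose $x$ has at least $r$ neighbours of label greater than $\ell(x)$, and let $j$ be the $r$th highest neighbouring label. Then $j>\ell(x)$, and there are $r$ neighbours $y_1,\dots,y_r$ with $\ell(y_t)\ge j$. We must show $S_j^x\subseteq\langle A_0\rangle$. If $j=r$, Rule 1 already applies (there are $r\ge r-\ell(x)$ neighbours of label $r$), so assume $j\le r-1$. By the invariant and the nesting (c), each $Q_d^{y_t}$ contains an infected copy of $S_j$: either $S_{\ell(y_t)}^{y_t}\supseteq S_j^{y_t}$ when $\ell(y_t)\le r-1$, or all of $Q_d^{y_t}$ when $\ell(y_t)=r$. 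Thus every $v\in S_j$ has its copy $v^x$ adjacent to the $r$ infected vertices $v^{y_1},\dots,v^{y_r}$, so $v^x$ becomes infected. This gives $S_j^x\subseteq\langle A_0\rangle$.

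The invariant records only $S_{\ell(x)}^x$ rather than everything infected so far. This is harmless because the closure is monotone and we only need lower bounds on what is infected.

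The main obstacle is the Rule 2 step. It uses exactly that the sets $S_1\subseteq\cdots\subseteq S_{r-1}$ are nested, so that copies of $S_j$ sit in the same coordinates across different subcubes, together with the fact that a label-$r$ subcube contains a copy of every $S_j$. We must also handle the boundary case $j=r$ separately, since $S_r$ need not contain $S_{r-1}$; this is why it is routed through Rule 1. The hypothesis $d\ge r+k$ ensures $d-k\ge r$, so the percolating sets in (a) make sense and are nontrivial; it plays no further role in the argument.
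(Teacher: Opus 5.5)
Your proposal is correct and takes essentially the same approach as the paper. The paper also runs through single rule applications and shows by induction that $A_t^*$ (copies of $S_i$ in label-$i$ subcubes, full subcubes for label $r$) is eventually infected, using $r-i$ external neighbours plus (a) for Completion and the nesting (c) for Promotion. Your explicit handling of the case $j=r$ in Rule 2 is slightly cleaner than the paper, which only concludes that $S_r^x$ is infected and leaves implicit the further appeal to (a) needed to fill $Q_d^x$. The reason that case needs care is that label $r$ requires the whole subcube, not any failure of nesting.
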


\begin{proof}
By \ref{eq:labelingPercolates}, we can let $\ell_1,\dots,\ell_N$ be labelings of $V(Q_k)$ where $\ell_N$ maps every vertex to $r$ and, for $1\leq t\leq N$, $\ell_t$ is obtained from $\ell_{t-1}$ by applying exactly one of the rules of the $r$-meta bootstrap process on exactly one vertex of $Q_k$. 

For each $0\leq t\leq N$, define 
\[A_t^*:=\bigcup_{i=1}^{r-1}\left(\bigcup_{x\in\ell_t^{-1}(i)}S_i^x\right)\cup\left(\bigcup_{x\in\ell_t^{-1}(r)} V(Q_{d-k})^x\right).\]
Our goal is to show that, under the $r$-neighbour bootstrap process in $Q_d$,
\begin{itemize}
    \item if every vertex of $A_0$ is infected, then every vertex of $A_0^*$ eventually becomes infected, and
    \item for $0\leq t\leq N-1$, if every vertex of $A_t^*$ is infected, then every vertex of $A_{t+1}^*$ eventually becomes infected.
\end{itemize}
This is sufficient to prove the lemma because $A_N^*=V(Q_d)$. 

For the first claim, observe that, for every $x\in\ell_0^{-1}(r)$, the subgraph of $Q_d$ induced by $V(Q_{d-k})^x$ is isomorphic to $Q_{d-k}$ and so the set $S_r^x$ percolates under the $r$-neighbour bootstrap process within this set by \ref{eq:blocksPercolate}. Thus, every vertex of $A_0^*$ is eventually infected. 

Now, consider the second claim. Suppose first that $\ell_t$ and $\ell_{t+1}$ differ on one vertex $x$ whose label was changed from $i$ to $r$ via the Completion Rule. Let $y_1,\dots,y_{r-i}$ be neighbours of $x$ such that $\ell_t(y_1)=\cdots=\ell_t(y_{r-i})=r$. Then, by definition, $A_t^*$ contains every vertex of $V(Q_{d-k})^{y_1},\dots,V(Q_{d-k})^{y_{r-i}}$. Every vertex of $V(Q_{d-k})^x$ has exactly one neighbour in each of these sets and so each vertex of $V(Q_{d-k})^x$ has at least $r-i$ infected neighbours which lie outside of $V(Q_{d-k})^x$. Also, $A_t^*$ contains all vertices of $S_i^x$, which is a percolating set for the $i$-neighbour bootstrap percolation in the subgraph of $Q_d$ induced by $V(Q_{d-k})^x$. Putting this together, we get that every vertex of $V(Q_{d-k})^x$ is eventually infected. 

Now, suppose that $\ell_t$ and $\ell_{t+1}$ differ on one vertex $x$ whose label was changed from $i$ to $j$ via the Promotion Rule. Let $y_1,\dots,y_{r}$ be neighbours of $x$ such that $\ell_t(y_1),\dots,\ell_t(y_{r})\geq j$. Then, by definition of $A_t^*$ and condition \ref{eq:nested} of the lemma, we have that $A_t^*$ contains every vertex of $S_j^{y_1},\dots,S_j^{y_{r}}$. Every vertex of $S_j^x$ has exactly one neighbour in each of these sets, and so every such vertex has at least $r$ infected neighbours. Thus, every vertex of $S_j^x$ eventually gets infected. This completes the proof.
\end{proof}

Our approach in proving Theorems~\ref{th:main} and~\ref{th:linearDiff} is via an application of Lemma~\ref{lem:meta}. In order to ensure that condition \ref{eq:nested} of the lemma is satisfied, we require percolating sets for the $1$-, $2$- and $3$-neighbour bootstrap processes which are nested within one another. Constructing such sets is the goal of the next section.

\section{Nested 1-, 2- and 3-Neighbour Constructions}
\label{sec:constructions3}

The aim of this section is to prove the following lemma.

\begin{lem}
\label{lem:r=3}
If $d\geq3$, then there exist subsets $S_1^d,S_2^d$ and $S_3^d$ of $V(Q_d)$ such that
\begin{enumerate}[(a)]
\item $S_1^d\subseteq S_2^d\subseteq S_3^d$,
\item $|S_1^d|=1$,
\item $|S_2^d|=\left\lceil\frac{d}{2}\right\rceil+1$, 
\item $|S_3^d|=\left\lceil\frac{d(d+3)}{6}\right\rceil+1$, and
    \item $S_i^d$ percolates with respect to the $i$-neighbour bootstrap process on $Q_d$ for all $i\in\{1,2,3\}$. 
\end{enumerate}
\end{lem}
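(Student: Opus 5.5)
We plan to argue by induction on $d$, building the nested triple $(S_1^d,S_2^d,S_3^d)$ from triples in lower dimensions. The sets $S_1^d$ and $S_2^d$ are essentially forced. We take $S_1^d=\{\mathbf{0}\}$. For $S_2^d$ we take $\mathbf{0}$ together with $\lceil d/2\rceil$ vertices of weight two whose supports are the pairs $\{1,2\},\{3,4\},\dots$, with the last pair overlapping an earlier one when $d$ is odd. This is the standard Balogh--Bollob\'as construction for~\eqref{eq:r=2}. It percolates for $r=2$: weight-one vertices in each pair get infected from $\mathbf{0}$ and the weight-two vertex, and subcubes spanned by infected subcubes with a common vertex then merge. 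The real content of the lemma is therefore to build a $3$-neighbour percolating set $S_3^d$ of the optimal size $\lceil d(d+3)/6\rceil+1$ (matching~\eqref{eq:r=3}) that contains this particular $S_2^d$, or an isomorphic copy under a coordinate permutation or translation.

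For $S_3^d$ we will mimic the recursive construction of~\cite[Section~5]{MorrisonNoel18}, which is a $2$-level version of Lemma~\ref{lem:meta}. We write $Q_d=Q_k\times Q_{d-k}$ with small $k$, either $k=3$ or a step of size $6$ in $d$. Note that $\lceil (d+6)(d+9)/6\rceil-\lceil d(d+3)/6\rceil = 2d+9$, which suggests a step of $d\to d+6$ adding $2d+9$ vertices. Concretely, we apply the $3$-meta bootstrap process on a small cube $Q_k$ with a labeling $\ell_0$ that uses one label-$3$ vertex (carrying $S_3^{d-k}$ by induction) and several label-$2$ and label-$1$ vertices (carrying $S_2^{d-k}$ and $S_1^{d-k}$), plus a few extra vertices. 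The cost is then $|S_3^{d-k}|+a\lceil (d-k)/2\rceil+b$. We choose $a,b$ so that this equals the target, and check that the resulting union is itself nested, i.e.\ that it contains a copy of $S_2^d$ and $\mathbf{0}$. Lemma~\ref{lem:meta} handles percolation once the meta-labeling percolates, and nestedness at level $d-k$ comes from the induction hypothesis.

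Base cases $3\le d\le d_0$ (about $d\le 8$, one per residue class mod $6$) will be given explicitly and checked by hand or computer. The inductive step reduces to finding a small meta-labeling on $Q_k$ of the right cost, again a finite check.

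The main obstacle will be nestedness together with exact optimality. The meta construction of Lemma~\ref{lem:meta} places copies of $S_2^{d-k}$ on several fibres, and we must ensure that the union still contains a genuine $2$-percolating set of size exactly $\lceil d/2\rceil+1$ in $Q_d$. The natural choice is $S_2^d=S_2^{d-k,x}\cup\{\text{a few vertices in neighbouring fibres}\}$ with $x$ the label-$3$ or label-$2$ fibre. We must verify that these few vertices are already present in $A_0$, which constrains the labeling. We will also need to handle the ceiling/parity bookkeeping across residues of $d$ modulo $6$, possibly needing different labelings for different residues. We expect to find these by small search and to verify them directly.
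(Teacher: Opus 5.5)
Your framework is the paper's: $S_1^d=\{0^d\}$, the same paired $S_2^d$, explicit base cases for $3\le d\le 8$, and Lemma~\ref{lem:meta} with $k=3$ or $k=6$ for the induction. The gap is that the proposal stops exactly where the content of the lemma begins. Lemma~\ref{lem:meta} and the cardinality arithmetic are routine. What actually has to be proved is twofold: that optimal sets $S_3^d\supseteq S_2^d$ exist for $4\le d\le 8$, and that there are percolating $3$-meta labelings of exactly the right cost whose union contains $S_2^d$. Saying ``we expect to find these by small search'' establishes neither.

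The paper supplies both. It gives explicit sets for $d\le 8$. For odd $d\ge 9$ it uses the labeling from~\cite{MorrisonNoel18} with $\ell_0^3(0,0,0)=3$, $\ell_0^3(1,1,0)=\ell_0^3(0,1,1)=2$ and $\ell_0^3(1,0,1)=1$, applied to the triple in the even dimension $d-3$. For even $d\ge 10$ it uses a labeling $\ell_0^6$ of $Q_6$ with one $3$ at $(1,1,0,0,0,0)$, four $2$s including $0^6$ and $(0,0,0,0,1,1)$, and five $1$s including $(0,0,1,1,0,0)$. This makes $A_0$ contain $S_2^{d-6}$ prefixed by $0^6$ together with $e_1+e_2$, $e_3+e_4$ and $e_5+e_6$.

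Your residue bookkeeping also needs to be sharper than ``possibly different labelings for different residues'', because which step serves which parity is forced. For one labeling to work along a whole residue class, matching the coefficient of $d$ forces $a=2$ label-$2$ vertices when $k=3$ and $a=4$ when $k=6$.

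A $k=3$ step from odd $d-3$ would then need no label-$1$ vertices at all. But a labeling of $Q_3$ with one $3$, two $2$s and no $1$s cannot percolate. Fed into Lemma~\ref{lem:meta} at odd $d$, together with an optimal $3$-percolating set of $Q_{d-3}$ (which exists by~\eqref{eq:r=3}), it would give $m(Q_d;3)\le\lceil d(d+3)/6\rceil$, contradicting Theorem~\ref{th:MN}.

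Similarly, your $d\to d+6$ step starting from an odd dimension needs exactly three label-$1$ vertices. A $Q_6$ labeling with one $3$, four $2$s and three $1$s would beat Theorem~\ref{th:MN} by two at even $d$. So odd $d$ must come from $k=3$ and even $d$ from $k=6$, both starting from an even dimension, which is what the paper does.

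Finally, your allowance for an isomorphic copy of $S_2^d$ is genuinely needed. In the odd step the union contains $0^d,e_1+e_2,e_2+e_3,e_4+e_5,\dots,e_{d-1}+e_d$, which coincides with the paper's $S_2^d$ only after reversing the order of the coordinates.
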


\begin{proof}
Let $0^d$ be the zero vector in dimension $d$ and, for each $1\leq i\leq d$, let $e_i$ be the $i$th standard basis vector. For each $d\geq1$, define
\[S_1^d:=\{0^d\}.\]
For each even $d\geq2$ define
\[S_2^d:=S_1^d\cup\{e_{2i-1}+e_{2i}: 1\leq i\leq d/2\}\]
and, for odd $d\geq3$, define
\[S_2^d:=S_1^d\cup\{e_{2i-1}+e_{2i}: 1\leq i\leq \lfloor d/2\rfloor\}\cup\{e_{d-1}+e_d\}.\]
Clearly, $|S_1^d|=1,|S_2^d|=\left\lceil\frac{d}{2}\right\rceil+1$ and $S_1^d\subseteq S_2^d$ for all $d\geq2$. The fact that $S_1^d$ percolates with respect to the $1$-neighbour bootstrap process is trivial and it is easy to verify that $S_2^d$ percolates with respect to the $2$-neighbour bootstrap process; in fact, $S_2^d$ coincides with the set $A_0$ in Construction~\ref{const:Steiner} specialized to $r=2$.

We prove, by induction on $d$, that, for every $d\geq3$, there exists a set $S_3^d\subseteq V(Q_d)$ of cardinality $\left\lceil\frac{d(d+3)}{6}\right\rceil+1$ that percolates under the $3$-neighbour bootstrap process in $Q_d$ and contains $S_2^d$. Our base cases are $d=3,4,5,6,7,8$.

\begin{case}
$d=3$.
\end{case}

Let $S_3^3$ be the set of all vertices of $Q_3$ whose coordinate sum is even. Clearly $|S_3^3|=4=\frac{3(3+3)}{6}+1$ and $S_2^3\subseteq S_3^3$. It is easy to see that $S_3^3$ percolates with respect to the $3$-neighbour bootstrap process in $Q_3$ (in one step). 

\begin{case}
$d=4$.
\end{case}

Define
\[S_3^4:=S_2^4\cup\{(0,1,1,0),
(1,0,0,1),
(1,1,1,1)\}.\]
Clearly, $|S_3^4|=6=\left\lceil\frac{4(4+3)}{6}\right\rceil+1$. The vertices 
\[(1,0,0,0),\ (0,1,0,0),\ (0,0,1,0),\ (0,0,0,1),\ (1,1,1,0),\ (1,1,0,1),\ (1,0,1,1),\ (0,1,1,1)\]
all have three neighbours in $S_3^4$ and so become infected after one round of the $3$-neighbour bootstrap process starting with $S_3^4$. At this point, all remaining uninfected vertices have three infected neighbours and so $S_3^4$ percolates. We also verify this by computer; see this \href{https://colab.research.google.com/drive/1_owaZNVjQTtzwKwdJ38Z51VGLKjKb-Fd?usp=sharing}{Google Colab Notebook} and the ancillary file \texttt{FourNeighbourBootstrapVerification.ipynb} uploaded with the arxiv version of this paper. 

\begin{case}
$d=5$.
\end{case}

Define
\[S_3^5:=S_2^5\cup\{(1, 0, 1, 1, 1), (0, 1, 0, 0, 1), (1, 1, 0, 1, 1), (0, 1, 1, 0, 0)\}.\]
Clearly, $|S_3^5|=8=\left\lceil\frac{5(5+3)}{6}\right\rceil+1$. The fact that $S_3^5$ percolates with respect to the $3$-neighbour process in $Q_5$ is verified by the Python program in this \href{https://colab.research.google.com/drive/1_owaZNVjQTtzwKwdJ38Z51VGLKjKb-Fd?usp=sharing}{Google Colab Notebook} and the ancillary file \texttt{FourNeighbourBootstrapVerification.ipynb} uploaded with the arxiv version of this paper. 

\begin{case}
$d=6$.
\end{case}

Define
\begin{align*}
S_3^6:=S_2^6\cup \{&(1, 1, 0, 0, 1, 1),  (1, 1, 0, 1, 0, 1), (0, 1, 1, 1, 0, 1), (1, 0, 1, 0, 1, 1),\\& (1, 0, 1, 0, 0, 0), (0, 0, 0, 1, 1, 0)\}.
\end{align*}
Clearly, $|S_3^6|=10 = \frac{6(6+3)}{6}+1$. The fact that $S_3^6$ percolates with respect to the $3$-neighbour process in $Q_6$ is verified by the Python program in this \href{https://colab.research.google.com/drive/1_owaZNVjQTtzwKwdJ38Z51VGLKjKb-Fd?usp=sharing}{Google Colab Notebook} and the ancillary file \texttt{FourNeighbourBootstrapVerification.ipynb} uploaded with the arxiv version of this paper.

\begin{case}
$d=7$.
\end{case}

Define 
\begin{align*}
S_3^7:=S_2^7\cup\{
&(1, 0, 0, 1, 1, 1, 1), (0, 1, 0, 0, 1, 0, 0), (1, 0, 0, 0, 0, 0, 1), (1, 0, 1, 0, 1, 0, 0),\\& (1, 1, 1, 1, 1, 0, 1), (1, 0, 0, 1, 0, 0, 0), (1, 0, 1, 1, 1, 1, 0),  (1, 1, 0, 1, 1, 1, 0)\}
\end{align*}
Clearly, $|S_3^7|=13=\left\lceil\frac{7(7+3)}{6}\right\rceil+1$. The fact that $S_3^7$ percolates with respect to the $3$-neighbour process in $Q_7$ is verified by the Python program in this \href{https://colab.research.google.com/drive/1_owaZNVjQTtzwKwdJ38Z51VGLKjKb-Fd?usp=sharing}{Google Colab Notebook} and the ancillary file \texttt{FourNeighbourBootstrapVerification.ipynb} uploaded with the arxiv version of this paper. 

\begin{case}
$d=8$.
\end{case}

Define 
\begin{align*}
S_3^8:=S_2^8\cup\{
&(0, 1, 0, 1, 1, 0, 0, 1), (1, 0, 1, 0, 1, 1, 0, 0), (0, 0, 1, 0, 1, 0, 0, 1), (0, 1, 0, 0, 1, 1, 1, 1),\\& (1, 1, 0, 1, 0, 1, 0, 0), (1, 0, 0, 1, 0, 1, 0, 0), (1, 0, 1, 1, 0, 1, 1, 0), (1, 1, 1, 0, 1, 1, 1, 1),\\& (0, 0, 1, 0, 0, 1, 1, 1), (0, 1, 0, 0, 0, 0, 0, 1), (0, 0, 1, 0, 0, 1, 0, 0)\}
\end{align*}
Clearly, $|S_3^8|=16=\left\lceil\frac{8(8+3)}{6}\right\rceil+1$. The fact that $S_3^8$ percolates with respect to the $3$-neighbour process in $Q_8$ is verified by the Python program in this \href{https://colab.research.google.com/drive/1_owaZNVjQTtzwKwdJ38Z51VGLKjKb-Fd?usp=sharing}{Google Colab Notebook} and the ancillary file \texttt{FourNeighbourBootstrapVerification.ipynb} uploaded with the arxiv version of this paper.

\begin{case}
$d\geq9$ and $d$ is odd.
\end{case}

We construct $S_3^d$ via an application of Lemma~\ref{lem:meta} with $k=3$. This construction comes from~\cite[Lemma~5.1]{MorrisonNoel18}. Consider the labeling $\ell_0^3:V(Q_3)\to\{0,1,2,3\}$ such that
\[\ell_0^3(0,0,0)=3,\]
\[\ell_0^3(1,1,0)=\ell_0^3(0,1,1) = 2,\]
\[\ell_0^3(1,0,1)=1,\]
and all other vertices are labeled $0$. It is not hard to verify $\ell_0^3$ percolates with respect to the $3$-meta bootstrap process in $Q_3$; see this \href{https://colab.research.google.com/drive/1_owaZNVjQTtzwKwdJ38Z51VGLKjKb-Fd?usp=sharing}{Google Colab Notebook} and the ancillary file \texttt{FourNeighbourBootstrapVerification.ipynb} uploaded with the arxiv version of this paper.

By the inductive hypothesis, there exists a percolating set $S_3^{d-3}$ for the $3$-neighbour bootstrap process in $Q_{d-3}$ of cardinality $\left\lceil\frac{(d-3)d}{6}\right\rceil+1$ which contains $S_2^{d-3}$ as a subset. Let $A_0$ be the subset of $V(Q_d)$ obtained by applying Lemma~\ref{lem:meta} to $\ell_0:=\ell_0^3$ and the sets $S_1:=S_1^{d-3},S_2:=S_2^{d-3}$ and $S_3:=S_3^{d-3}$ and define $S_3^d:=A_0$. The fact that $S_3^d$ percolates under the $3$-neighbour bootstrap process in $Q_d$ follows from Lemma~\ref{lem:meta}. Also, by construction and the fact that $d-3$ is even,
\begin{align*}|S_3^d|=|S_3^{d-3}|+2|S_2^{d-3}| + |S_1^{d-3}|& = \left(\left\lceil\frac{(d-3)d}{6}\right\rceil+1\right) + 2\left(\frac{d-3}{2}+1\right) + 1\\
&=\left\lceil\frac{d(d+3)}{6}\right\rceil+1.\end{align*}
Now, we show that $S_2^d\subseteq S_3^d$. Since $\ell_0^3(0,0,0)=3$ and $S_2^{d-3}\subseteq S_3^{d-3}$ by induction, every vertex obtained from a vertex of $S_2^{d-3}$ by prefixing it with three zeros is in $S_3^d$. Also, since $\ell_0(1,1,0)$ and $\ell_0(0,1,1)$ are non-zero and $S_1^{d-3}\subseteq S_2^{d-3}$, the two vertices obtained from $0^{d-3}$ by prefixing with $(1,1,0)$ or $(0,1,1)$ are contained in $S_3^d$. Thus, $S_2^d\subseteq S_3^d$ and we are done. 

\begin{case}
$d\geq9$ and $d$ is even.
\end{case}

We construct $S_3^d$ via an application of Lemma~\ref{lem:meta} with $k=6$. Consider the labeling $\ell_0^6:V(Q_6)\to\{0,1,2,3\}$ such that
\[\ell_0^6(1,1,0,0,0,0)=3,\]
\[\ell_0^6(0,0,0,0,0,0)=\ell_0^6(0,0,0,0,1,1) = \ell_0^6(0,0,0,1,0,1)=\ell_0^6(1,0,1,0,0,0)=2,\]
\begin{align*}&\ell_0^6(0,0,1,1,0,0)=\ell_0^6(0,1,0,1,1,0)=\ell_0^6(0,1,1,1,1,1)=\ell_0^6(1,0,1,1,0,1)\\
&=\ell_0^6(1,1,0,1,1,1)=1,\end{align*}
and all other vertices are labeled $0$. The fact that $\ell_0^6$ percolates with respect to the $3$-meta bootstrap process in $Q_6$ is verified by the Python program in this \href{https://colab.research.google.com/drive/1_owaZNVjQTtzwKwdJ38Z51VGLKjKb-Fd?usp=sharing}{Google Colab Notebook} and the ancillary file \texttt{FourNeighbourBootstrapVerification.ipynb} uploaded with the arxiv version of this paper. 

By the inductive hypothesis, there exists a percolating set $S_3^{d-6}$ for the $3$-neighbour bootstrap process in $Q_{d-6}$ of cardinality $\frac{(d-6)(d-3)}{6}+1$ which contains $S_2^{d-6}$ as a subset. Let $A_0$ be the subset of $V(Q_d)$ obtained by applying Lemma~\ref{lem:meta} to $\ell_0:=\ell_0^6$ and the sets $S_1:=S_1^{d-6},S_2:=S_2^{d-6}$ and $S_3:=S_3^{d-6}$ and define $S_3^d:=A_0$. The fact that $S_3^d$ percolates under the $3$-neighbour bootstrap process in $Q_d$ follows from Lemma~\ref{lem:meta}. Also, by construction,
\begin{align*}|S_3^d|=|S_3^{d-6}|+4|S_2^{d-6}| + 5|S_1^{d-6}|& = \left(\left\lceil\frac{(d-6)(d-3)}{6}\right\rceil+1\right) + 4\left(\frac{d-6}{2}+1\right) + 5\\
&=\left\lceil\frac{d(d+3)}{6}\right\rceil+1.\end{align*}
Now, we show that $S_2^d\subseteq S_3^d$. Since $\ell_0^6(0,0,0,0,0,0)=2$, we have that every vertex obtained from a vertex of $S_2^{d-6}$ by prefixing it with six zeros is in $S_3^d$. Also, since $\ell_0^6(1,1,0,0,0,0),\ell_0^6(0,0,1,1,0,0)$ and $\ell_0^6(0,0,0,0,1,1)$ are all non-zero and $S_1^{d-6}\subseteq S_2^{d-6}\subseteq S_3^{d-6}$, the three vertices obtained from $0^{d-6}$ by prefixing with $(1,1,0,0,0,0),(0,0,1,1,0,0)$ or $(0,0,0,0,1,1)$ are contained in $S_3^d$. Thus, $S_2^d\subseteq S_3^d$ and we are done. 
\end{proof}

\section{4-Neighbour Seed Constructions}
\label{sec:constructionsSeed4}

In this section, we provide explicit constructions of percolating sets for the $4$-neighbour bootstrap process in $Q_d$ for $4\leq d\leq 15$  which, together with the constructions in the previous section, form the atomic ``building blocks'' that are fed into Lemma~\ref{lem:meta} to prove our main theorems. 

\begin{lem}
\label{lem:Q4}
There exists a percolating set $S_4^4$ of cardinality $8$ for the $4$-neighbour bootstrap process in $Q_4$. 
\end{lem}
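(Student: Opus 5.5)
We will take $S_4^4$ to be one side of the bipartition of $Q_4$, in direct analogy with the case $d=3$ of Lemma~\ref{lem:r=3}. Concretely, let $S_4^4$ be the set of all vertices of $Q_4$ whose coordinate sum is even. There are $\frac{1}{2}\cdot 2^4=8$ such vertices, so the cardinality condition holds immediately.

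To show that $S_4^4$ percolates under the $4$-neighbour bootstrap process, we use parity. Let $v$ be a vertex with odd coordinate sum. Each of its neighbours differs from $v$ in exactly one coordinate, so every neighbour has even coordinate sum and hence lies in $S_4^4$. Since $Q_4$ is $4$-regular, $v$ has exactly $4$ neighbours, and all of them are infected at time zero. Therefore every vertex of odd weight becomes infected in the first round, and the process percolates after one step.

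The only point that needs care is that $r=d=4$: a healthy vertex must have \emph{all} of its neighbours infected, so there is no slack in the argument. The bipartite structure of $Q_4$ supplies exactly this, so I do not expect any real obstacle. The lemma only asserts existence, so there is no need to prove optimality. Still, as a consistency check, $8$ equals both the Morrison--Noel bound of Theorem~\ref{th:MN} at $d=r=4$ (all binomial terms vanish, leaving $2^{3}$) and the value $\frac{4(16+12+14)}{24}+1=8$ in Theorem~\ref{th:main}. As with the other small cases, the construction could also be confirmed by the verification notebook.
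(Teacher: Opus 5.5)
Your proposal is correct and matches the paper's proof: take $S_4^4$ to be the $8$ even-weight vertices of $Q_4$, so that every odd-weight vertex has all $4$ of its neighbours infected and becomes infected in the first round. Your consistency checks against Theorem~\ref{th:MN} and Theorem~\ref{th:main} (both giving $8$) are also correct.
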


\begin{proof}
Take $S_4^4$ to be the set of all vertices of $Q_4$ whose coordinate sum is even. Then $S_4^4$ is easily seen to have cardinality 8 and to percolate under the $4$-neighbour bootstrap process in $Q_4$ (in only one step). 
\end{proof}

Next, we consider $d=5$. Note that the size of the percolating set that we find in this case is one larger than the lower bound in Theorem~\ref{th:MN}, which is consistent with the exhaustive computer search discussed in~\cite[Section~6]{MorrisonNoel18}.

\begin{lem}
\label{lem:Q5}
There exists a percolating set $S_4^5$ of cardinality $14$ for the $4$-neighbour bootstrap process in $Q_5$. 
\end{lem}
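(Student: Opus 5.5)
Since the statement only asks for existence of a set of size $14$, I would give an explicit set and verify it, preferably with a structural argument rather than pure computation. The natural route is to split $Q_5$ along the last coordinate into two copies $Q_4^{(0)}$ and $Q_4^{(1)}$ of $Q_4$. Each vertex then has four neighbours inside its own copy and exactly one neighbour in the other copy. Suppose one copy, say $Q_4^{(0)}$, becomes fully infected. Then every vertex of $Q_4^{(1)}$ has one infected external neighbour, so it only needs $3$ infected neighbours inside $Q_4^{(1)}$. Hence it suffices that the trace of the initial set on $Q_4^{(1)}$, together with whatever gets infected there earlier, percolates under the $3$-neighbour process on $Q_4$. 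Case $d=4$ of Lemma~\ref{lem:r=3} shows that $6$ vertices suffice for that.

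The budget is therefore $14=8+6$, and the plan has two steps.
\begin{enumerate}[1.]
\item Fully infect one copy, which costs $8$ vertices if done in isolation (Lemma~\ref{lem:Q4}).
\item Place a $3$-percolating set of size $6$ in the other copy.
\end{enumerate}
This only works if the first copy becomes infected on its own. Taking $S_4^4\times\{0\}$ (even-weight vertices of the first copy) together with $S_3^4\times\{1\}$ gives exactly $14$ vertices. Step 1 goes through as in Lemma~\ref{lem:Q4}: every odd vertex of $Q_4^{(0)}$ has its four in-copy neighbours infected, so $Q_4^{(0)}$ fills in one round. Step 2 then follows from the reduction above together with case $d=4$ of Lemma~\ref{lem:r=3}.

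The only real issue is whether the cardinality and the use of Lemma~\ref{lem:r=3} are consistent. Once $Q_4^{(0)}$ is infected, each vertex $v\in Q_4^{(1)}$ has exactly one infected neighbour outside its copy. So the $4$-neighbour process restricted to $Q_4^{(1)}$ dominates the $3$-neighbour process on $Q_4$ started from $S_3^4$. That process percolates by Lemma~\ref{lem:r=3}, so everything is infected.

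I expect this simple product construction to suffice. If something fails, the fallback is to exhibit a set found by computer search and verify percolation with the same Python notebook used for the other base cases. The main point to double-check is that the even-weight set of $Q_4^{(0)}$ percolates using only neighbours inside that copy. This is exactly the one-step argument of Lemma~\ref{lem:Q4}, so there is no obstacle.
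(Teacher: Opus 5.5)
Your construction is correct, and it takes a genuinely different route from the paper. The paper exhibits an explicit computer-found $14$-element set and delegates the percolation check to a verification program. That set is not of your product form: for each of the five coordinates, exactly seven of its vertices have that coordinate equal to $1$.

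You instead split $Q_5$ into two copies of $Q_4$. You place the even-weight set of Lemma~\ref{lem:Q4} in one copy and $S_3^4$ from Lemma~\ref{lem:r=3} in the other. Once the first copy is full, every vertex of the second copy has one infected outside neighbour, so it needs only $3$ inside. This is precisely the $k=1$ case of Lemma~\ref{lem:meta}, with $Q_1$ labeled by $4$ and $3$ and the $3$ upgraded by the Completion Rule. Equivalently, it is the bound $m(Q_d;r)\leq m(Q_{d-1};r)+m(Q_{d-1};r-1)$.

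Your route gives a proof checkable entirely by hand, since both ingredients are verified by hand in the paper. The paper's route has no structural advantage here beyond uniformity with its other seed constructions. The recursion is lossy in general: for $d=6$ it gives only $14+8=22$ against the optimal $18$, which is why the paper needs explicit seeds elsewhere. At $d=5$, however, it attains the optimum, since $14$ is optimal by the exhaustive search cited in the paper. Your proposed fallback to a computer search is therefore unnecessary.
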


\begin{proof}
Define
\begin{align*}
S_4^5:=
\{
&(0,0,0,0,1),
(0,0,1,0,0),
(0,0,1,1,1),
(0,1,0,1,0),
(0,1,0,1,1),
\\&(0,1,1,0,1),
(0,1,1,1,0),
(1,0,0,0,0),
(1,0,0,1,1),
(1,0,1,0,1),
\\&(1,0,1,1,0),
(1,1,0,0,1),
(1,1,0,1,0),
(1,1,1,0,0)
\}.
\end{align*}
Clearly, $|S_4^5|=14$. For a verification program showing that $S_4^5$ percolates under the $4$-neighbour bootstrap process in $Q_5$, see this \href{https://colab.research.google.com/drive/1_owaZNVjQTtzwKwdJ38Z51VGLKjKb-Fd?usp=sharing}{Google Colab Notebook} and the ancillary file \texttt{FourNeighbourBootstrapVerification.ipynb} uploaded with the arxiv version of this paper. 
\end{proof}

Next, for $d=6$, we present a construction which matches the lower bound in Theorem~\ref{th:MN}.

\begin{lem}
\label{lem:Q6}
There exists a percolating set $S_4^6$ of cardinality $18$ for the $4$-neighbour bootstrap process in $Q_6$. 
\end{lem}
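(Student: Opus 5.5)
The target $18$ is exactly the value of the Morrison--Noel bound in Theorem~\ref{th:MN} for $r=4$ and $d=6$: the terms are $8+\binom{4}{3}\tfrac14+\binom{3}{2}\tfrac{4}{4}+\binom{2}{1}\tfrac{12}{4}=8+1+3+6=18$. So there is no slack, and I would prove Lemma~\ref{lem:Q6} by exhibiting an explicit $18$-element set $S_4^6\subseteq V(Q_6)$ and checking that it percolates. This follows the pattern of Lemmas~\ref{lem:Q4} and~\ref{lem:Q5} and of the small base cases in Lemma~\ref{lem:r=3}.

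First I would rule out getting the set for free from Lemma~\ref{lem:meta}.
\begin{itemize}
\item With $k=1$, the only useful labeling of $Q_1$ puts label $4$ on one vertex and label $3$ on the other (Completion). This costs $|S_4^5|+|S_3^5|=14+8=22$.
\item With $k=2$, every vertex of $Q_2$ has degree $2$, so Promotion (which needs $4$ higher-labelled neighbours) never applies. Only Completion chains are available, and the cheapest costs $8+6+6+3=23$, using $S_4^4$, two copies of $S_3^4$ and $S_2^4$.
\end{itemize}
Both exceed $18$, so a genuinely ad hoc set is needed.

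To find it, I would run a computer local search (simulated annealing over $18$-subsets, or an evolutionary search of the AlphaEvolve type). The objective would be the size of the final closure under the $4$-neighbour process, maximized until it equals $64$. I would seed the search with structured candidates to shrink the space. A natural seed is a set containing the even-weight vertices of a $Q_4$-subcube, which infect that subcube in one step. The remaining $10$ vertices would then be chosen so that the infected copy of $Q_4$ supplies one infected neighbour to every vertex of the adjacent $Q_4$-copies. Those copies then need percolating sets for only the $3$-neighbour process, with savings from sharing vertices across copies.

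Once a set is found, I would list its $18$ vectors explicitly in the proof and verify percolation with the same Python program used for the earlier lemmas, referring to the Colab notebook and the ancillary file. If the set is reasonably structured, I would also record the first one or two rounds of infection by hand, as in Case~$d=4$ of Lemma~\ref{lem:r=3}.

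The main obstacle is finding the set. Because $18$ matches the lower bound, every initially infected vertex must be used efficiently. The proof of Theorem~\ref{th:MN} suggests the process must essentially never ``waste'' an infected neighbour, so random search will rarely succeed and the structured seeding above is what I would try first. Verification, by contrast, is routine.
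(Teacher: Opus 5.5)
\textbf{The construction itself is missing.} Your outline has the same shape as the paper's proof: an explicit $18$-element set whose percolation is checked by computer. But that set is the entire content of the proof, and you never produce one. A plan to "run a search until the closure reaches all $64$ vertices" establishes nothing unless the search succeeds, and nothing in your argument shows that an $18$-element percolating set exists. Since $18$ is exactly the Morrison--Noel lower bound, existence is precisely the non-trivial point. The paper settles it by listing the $18$ vectors of $S_4^6$ and verifying percolation with the accompanying program.

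\textbf{Your proposed seed cannot work.} The one concrete search idea you give is provably impossible. Suppose $|A|=18$, $A$ percolates, and $A$ contains at least $8$ vertices of a $Q_4$-subcube, say $B_{00}$ (last two coordinates $00$). Let $L,M,R$ be the $Q_4$-blocks with last two coordinates $10,11,01$, and let $H=L\cup M\cup R$.
\begin{itemize}
\item Pre-infecting all of $B_{00}$ only helps. The process on $H$ then has threshold $3$ on $L\cup R$ (degree $5$ in $H$) and threshold $4$ on $M$ (degree $6$).
\item Each vertex of $H$ has only two neighbours outside its own block. So $A\cap L$ and $A\cap R$ must each percolate $Q_4$ under the $2$-neighbour process, giving $a:=|A\cap(L\cup R)|\geq 6$.
\item Let $c=|A\cap M|$ and let $\epsilon_v\geq 0$ be the surplus of infected neighbours over the threshold when $v$ is infected. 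Track the edge boundary of the infected set in $H$ along a linearization of the process. It starts at $5a+6c-2e(A\cap H)$, changes by $\deg_H(v)-2k_v$ when $v$ is infected with $k_v$ infected neighbours, and ends at $0$. This gives $3a+4c=32+e(A\cap H)+\sum_v\epsilon_v$.
\item The last vertex of $H$ to be infected has surplus $2$. The last vertices infected in $L$ and in $R$ have surplus at least $1$, since all four of their block-neighbours are already infected. Hence $\sum_v\epsilon_v\geq 3$ and $3a+4c\geq 35$.
\item But $a+c\leq 10$ and $a\geq 6$ give $3a+4c=3(a+c)+c\leq 34$, a contradiction.
\end{itemize}
So no $18$-element percolating set meets a $Q_4$-subcube in $8$ vertices, and the hoped-for ``savings from sharing'' never materialize. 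Consistently, the paper's set meets each $Q_4$-subcube in at most $6$ vertices.

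\textbf{A minor slip.} For $k=2$, the cheapest percolating meta-labeling puts label $4$ on two antipodal vertices of $Q_2$ and label $2$ on the other two. This costs $2\cdot 8+2\cdot 3=22$, not $23$. It does not affect your conclusion that Lemma~\ref{lem:meta} cannot reach $18$.
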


\begin{proof}
Define
\begin{align*}
S_4^6:=
\{
&(1,0,0,0,0,1),
(1,0,0,0,1,0),
(0,0,0,0,1,1),
(0,0,0,1,0,0),
(1,0,0,1,1,1),
\\&(0,0,1,0,1,0),
(1,0,1,1,0,1),
(1,0,1,1,1,0),
(1,1,0,0,0,0),
(0,1,0,0,1,0),
\\&(1,1,0,1,0,1),
(1,1,0,1,1,0),
(0,1,0,1,1,1),
(0,1,1,0,0,0),
(1,1,1,0,1,1),
\\&(1,1,1,1,0,0),
(0,1,1,1,0,1),
(0,1,1,1,1,0)
\}.
\end{align*}
Clearly, $|S_4^6|=18$. For a verification program showing that $S_4^6$ percolates under the $4$-neighbour bootstrap process in $Q_6$, see this \href{https://colab.research.google.com/drive/1_owaZNVjQTtzwKwdJ38Z51VGLKjKb-Fd?usp=sharing}{Google Colab Notebook} and the ancillary file \texttt{FourNeighbourBootstrapVerification.ipynb} uploaded with the arxiv version of this paper. 
\end{proof}

The next construction is for $d=7$. This matches the construction found by B\'erczi and Wagner~\cite{BercziWagner24} and the lower bound in Theorem~\ref{th:MN}.

\begin{lem}
\label{lem:Q7}
There exists a percolating set $S_4^7$ of cardinality $26$ for the $4$-neighbour bootstrap process in $Q_7$. 
\end{lem}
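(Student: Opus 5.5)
The plan is to prove Lemma~\ref{lem:Q7} the same way as Lemmas~\ref{lem:Q5} and~\ref{lem:Q6}: write down an explicit $26$-element set $S_4^7\subseteq V(Q_7)$ and certify by exhaustive simulation of the $4$-neighbour bootstrap process that it percolates. A quick check shows this is the right target: the bound in Theorem~\ref{th:MN} for $r=4$, $d=7$ is $\lceil 7\cdot 84/24\rceil+1=26$. So no smaller construction is possible, and we need an extremal one.

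Before searching, I would check whether Lemma~\ref{lem:meta} can produce such a set from the smaller blocks, since that would give a human-readable proof. The condition $d\geq r+k$ forces $k\leq 3$, so we would place the blocks $S_1^4,S_2^4,S_3^4,S_4^4$ (of sizes $1,3,6,8$, from Lemmas~\ref{lem:r=3} and~\ref{lem:Q4}) according to a labelling of $Q_3$. In $Q_3$ every vertex has only three neighbours, so the Promotion Rule never fires for $r=4$, and only Completion is available. This makes percolating labellings expensive. For example, two vertices of label $4$ at distance two already cost $16$. The two common neighbours of these vertices then each need label $2$, for a further $6$. After those complete, each vertex of the opposite face has only one label-$4$ neighbour, so it needs label $3$. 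That pushes the total well past $26$, and the other small cases fail similarly. I therefore expect the meta approach to be too wasteful in dimension $7$, and I would fall back on a direct construction.

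For the direct construction I would take the set found by B\'erczi and Wagner~\cite{BercziWagner24}, or a set found by a local search: start from a near-optimal set, for instance a restriction of a Steiner-type Construction~\ref{const:Steiner}, and repeatedly swap vertices in and out to maximise the final infected set. I would list its $26$ vectors explicitly. I would then run the verification program from the Colab notebook and the ancillary file, which iterates $A_t$ until it stabilises and checks that all $128$ vertices are infected. Where possible, I would also describe the first round or two of infection by hand, as in Case $d=4$ of Lemma~\ref{lem:r=3}, to make the certificate transparent.

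The main obstacle is finding the set itself. The search space of $26$-subsets of $Q_7$ is huge, and sets of exactly the extremal size are rare, because every initially infected vertex must contribute essentially without waste. Once a candidate is in hand, the verification is a routine finite computation.
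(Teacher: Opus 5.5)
Your proposal matches the paper's proof. The paper lists an explicit $26$-element set $S_4^7$, notes that it coincides with the B\'erczi--Wagner construction~\cite{BercziWagner24} you propose to use, and certifies percolation with the verification program. Your computation that $26$ meets the bound of Theorem~\ref{th:MN} is also correct. Since the whole content of the lemma is that certificate, your plan becomes a proof once you reproduce or cite that specific set; the side discussion of Lemma~\ref{lem:meta} is not needed.
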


\begin{proof}
Define
\begin{align*}
S_4^7:=
\{
&
(0,0,0,0,0,1,0),
(0,0,0,0,1,1,1),
(0,0,0,1,0,1,1),
(0,0,0,1,1,0,1),
(0,0,1,0,1,0,1),
\\&(0,0,1,1,0,0,1),
(0,0,1,1,0,1,0),
(0,1,0,0,1,1,0),
(0,1,0,1,0,1,0),
(0,1,0,1,1,0,0),
\\&(0,1,1,0,1,1,1),
(0,1,1,1,0,0,0),
(0,1,1,1,0,1,1),
(1,0,0,0,0,1,1),
(1,0,0,1,1,1,1),
\\&(1,0,1,0,0,0,1),
(1,0,1,0,0,1,0),
(1,0,1,0,1,0,0),
(1,0,1,1,1,0,1),
(1,1,0,0,0,1,0),
\\&(1,1,0,0,1,0,0),
(1,1,0,0,1,1,1),
(1,1,0,1,0,0,1),
(1,1,0,1,1,1,0),
(1,1,1,0,0,1,1),
\\&(1,1,1,1,1,0,0)
\}.
\end{align*}
Clearly, $|S_4^7|=26$. For a verification program showing that $S_4^7$ percolates under the $4$-neighbour bootstrap process in $Q_7$, see this \href{https://colab.research.google.com/drive/1_owaZNVjQTtzwKwdJ38Z51VGLKjKb-Fd?usp=sharing}{Google Colab Notebook} and the ancillary file \texttt{FourNeighbourBootstrapVerification.ipynb} uploaded with the arxiv version of this paper. 
\end{proof}

The next construction is for $d=8$. This matches the construction found by B\'erczi and Wagner~\cite{BercziWagner24} and the lower bound in Theorem~\ref{th:MN}.

\begin{lem}
\label{lem:Q8}
There exists a percolating set $S_4^8$ of cardinality $35$ for the $4$-neighbour bootstrap process in $Q_8$. 
\end{lem}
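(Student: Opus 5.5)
The plan is to exhibit an explicit set of $35$ vertices of $Q_8$ and verify that it percolates. There are two natural routes. The first is an ad hoc list found by search, as was done for Lemmas~\ref{lem:Q5}--\ref{lem:Q7}. The second is structured: apply Lemma~\ref{lem:meta} with $k=4$ and $d-k=4$. I would try the structured route first, because it gives a human-checkable argument. It needs a labeling $\ell_0:V(Q_4)\to\{0,1,2,3,4\}$ that percolates under the $4$-meta bootstrap process. We then plug in $S_4:=S_4^4$ from Lemma~\ref{lem:Q4} (size $8$), and $S_3^4,S_2^4,S_1^4$ from Lemma~\ref{lem:r=3} (sizes $6,3,1$, nested as required). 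The resulting cardinality is
\[8a_4+6a_3+3a_2+a_1,\qquad a_i:=|\ell_0^{-1}(i)|,\]
and the target is $35$. A natural candidate is $a_4=1$, $a_3=3$, $a_2=3$, $a_1=0$, giving $8+18+9=35$. Other candidates are $a_4=1,a_3=2,a_2=4,a_1=3$, or $a_4=2,a_3=1,a_2=5,a_1=6$.

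The meta-labeling must then be shown to percolate, and I would argue this by tracking the rules. Put label $4$ at $0000$, and labels $3$ on several neighbours of $0000$ (say $e_1,e_2,e_3$). Each label-$3$ vertex adjacent to the label-$4$ vertex satisfies Completion, since it needs only $4-3=1$ neighbour of label $4$. After that, every vertex $e_i+e_j$ with $i,j\le 3$ has two label-$4$ neighbours. So a label $2$ placed there completes, and so on outward by layers. The label-$2$ vertices should be placed so that each later vertex sees enough completed neighbours: a label-$0$ vertex needs four label-$4$ neighbours, or first Promotion from four higher-labelled neighbours. I would check the exact placement by a short case analysis over layers of $Q_4$, or simply by running the paper's verification program.

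If no meta-labeling of cost $35$ on $Q_4$ percolates, which I suspect may happen since $d=8$ is small and the paper appears to treat $4\le d\le 15$ as seeds, I would fall back on an explicit list of $35$ vertices. One could reuse the B\'erczi--Wagner set, or produce one by local search starting from Construction~\ref{const:BW}-like layered sets. Percolation would then be certified by computer, as in Lemmas~\ref{lem:Q5}--\ref{lem:Q7}.

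The main obstacle is finding a structure that achieves exactly $35$, which matches the lower bound of Theorem~\ref{th:MN}. Once a candidate set is fixed, verification is routine.
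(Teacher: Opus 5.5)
Your structured route does not produce $35$ with any of the labelings you name, so the proof has to come from your fallback, which you do not carry out.

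\textbf{The candidates fail.} The vector $(a_4,a_3,a_2,a_1)=(2,1,5,6)$ costs $16+6+15+6=43$, not $35$. The other two candidates cannot percolate at all. Whether $\ell_0$ percolates depends only on $Q_4$, so such an $\ell_0$ could be fed into Lemma~\ref{lem:meta} with $d=10$ and $k=4$. Use $S_4^6$ of size $18$ (Lemma~\ref{lem:Q6}) and $S_1^6\subseteq S_2^6\subseteq S_3^6$ of sizes $1,4,10$ (Lemma~\ref{lem:r=3}). This gives a percolating set in $Q_{10}$ of size $18+30+12=60$ for $(1,3,3,0)$, and $18+20+16+3=57$ for $(1,2,4,3)$. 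Both contradict $m(Q_{10};4)\ge 61$ from Theorem~\ref{th:MN}.

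You can also watch your layered placement stall: $4$ at $0^4$, $3$ at $e_1,e_2,e_3$, and $2$ at $e_i+e_j$ for $i<j\le 3$.
\begin{itemize}
\item Every vertex of $\{x_4=0\}$ except $e_1+e_2+e_3$ completes.
\item $e_1+e_2+e_3$ is then stuck, because its fourth neighbour $(1,1,1,1)$ still has label $0$.
\item Every vertex with $x_4=1$ is stuck: it has only one neighbour in $\{x_4=0\}$, and its other three neighbours are still labelled $0$.
\end{itemize}
More generally, in $Q_4$ a label-$0$ vertex can change only when all four of its neighbours have nonzero label. So the nonzero set evolves inside the $4$-neighbour process on $Q_4$, and at least $m(Q_4;4)=8$ vertices must start nonzero. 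The paper's percolating labeling $\ell_0^4$ (Lemma~\ref{lem:Q4meta}) meets this with counts $(1,3,3,1)$, but in this dimension it costs $8+18+9+1=36$.

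\textbf{The missing step.} Your fallback is exactly the paper's proof. The paper writes down an explicit list of $35$ vertices of $Q_8$, noting that it matches the B\'erczi--Wagner construction, and certifies percolation by computer. The statement is a pure existence claim, and your proposal never exhibits such a set, so as written it is a search plan rather than a proof. To complete it you must actually give the set (or cite the B\'erczi--Wagner set explicitly) and run the verification.
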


\begin{proof}
Define
\begin{align*}
S_4^8:=
\{
&(1,0,0,0,0,0,1,1),
(0,0,0,0,0,1,0,0),
(1,0,0,0,0,1,1,0),
(1,0,0,0,1,0,0,1),
\\&(0,0,0,0,1,1,1,0),
(0,0,0,1,0,0,1,1),
(1,0,0,1,0,1,0,0),
(0,0,0,1,0,1,1,0),
\\&(0,0,0,1,1,0,0,0),
(0,0,0,1,1,1,1,1),
(1,0,1,0,0,0,0,1),
(1,0,1,0,0,1,0,0),
\\&(1,0,1,0,0,1,1,1),
(1,0,1,0,1,1,1,0),
(0,0,1,0,1,1,1,1),
(1,0,1,1,0,0,0,0),
\\&(0,0,1,1,0,1,1,1),
(0,0,1,1,1,0,0,1),
(0,0,1,1,1,1,1,0),
(0,1,0,0,0,0,0,0),
\\&(1,1,0,0,0,0,0,1),
(1,1,0,0,0,1,1,1),
(0,1,0,0,1,0,0,1),
(1,1,0,0,1,0,1,1),
\\&(1,1,0,0,1,1,1,0),
(0,1,0,0,1,1,1,1),
(0,1,0,1,0,0,1,0),
(1,1,0,1,0,1,1,0),
\\&(0,1,0,1,1,0,1,1),
(1,1,0,1,1,1,0,0),
(0,1,1,0,0,0,0,1),
(0,1,1,0,0,1,0,0),
\\&(1,1,1,0,0,1,0,1),
(0,1,1,0,1,1,0,1),
(1,1,1,1,1,0,1,1)
\}.
\end{align*}
Clearly, $|S_4^8|=35$. For a verification program showing that $S_4^8$ percolates under the $4$-neighbour bootstrap process in $Q_8$, see this \href{https://colab.research.google.com/drive/1_owaZNVjQTtzwKwdJ38Z51VGLKjKb-Fd?usp=sharing}{Google Colab Notebook} and the ancillary file \texttt{FourNeighbourBootstrapVerification.ipynb} uploaded with the arxiv version of this paper. 
\end{proof}

The next construction is for $d=9$. This improves the construction found by B\'erczi and Wagner~\cite{BercziWagner24} by one element and matches the lower bound in Theorem~\ref{th:MN}.

\begin{lem}
\label{lem:Q9}
There exists a percolating set $S_4^9$ of cardinality $47$ for the $4$-neighbour bootstrap process in $Q_9$. 
\end{lem}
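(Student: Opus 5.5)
The plan is to give $S_4^9$ explicitly as a list of $47$ vertices of $Q_9$ and to certify percolation by direct simulation, exactly as for Lemmas~\ref{lem:Q5}--\ref{lem:Q8}. The target size is forced: the bound in Theorem~\ref{th:MN} for $r=4$, $d=9$ is $\frac{9(81+27+14)}{24}+1=45.75+1$, so any percolating set has at least $47$ elements, and we must hit this exactly.

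I first considered a structured route through Lemma~\ref{lem:meta}, and it looks unpromising for $d=9$.
\begin{itemize}
\item With $k=3$, the meta-labeling cannot use Promotion at all, because every vertex of $Q_3$ has only $3<4$ neighbours. Completion alone would then force every vertex to carry a positive label. With blocks from $Q_6$ of sizes $18,10,4,1$ (from Lemma~\ref{lem:Q6} and Lemma~\ref{lem:r=3}), this is far too expensive.
\item With $k=4$ and blocks from $Q_5$, Lemma~\ref{lem:Q5} gives $|S_4^5|=14$, one above optimal. This slack tends to propagate into the total, making $47$ hard to reach.
\end{itemize}
So I would not rely on Lemma~\ref{lem:meta} here, and would instead treat $Q_9$ as a genuine seed case.

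The construction step is a computer search for a $47$-element set. I would guide the search by the extremal structure behind Theorem~\ref{th:MN}: equality in that bound requires essentially no ``wasted'' infections. So I would seed the search with sets concentrated on a few layers, in the spirit of Construction~\ref{const:BW}:
\begin{itemize}
\item a sparse part near layers $1$ and $2$;
\item a near-perfect packing of layer-$4$ vertices covering layer $3$;
\item possibly the near-optimal $46$--$48$ element sets of B\'erczi and Wagner~\cite{BercziWagner24} as starting points.
\end{itemize}
From there, a local search (swap one vertex, re-simulate, score by the number of vertices eventually infected) should reach a percolating configuration. This is the approach the paper uses with AlphaEvolve.

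Once a candidate is found, the proof is routine. We state the $47$ vectors, note that the cardinality is $47$, and run the $4$-neighbour bootstrap process on $Q_9$ ($512$ vertices) to check that the closure is all of $V(Q_9)$; the verification code is in the same notebook used for the other cases. The main obstacle is the search itself: the set must be exactly optimal, so the configuration space has very little slack, and naive random search tends to stall at $48$. I would therefore spend the effort on the scoring heuristic, for example rewarding the size of the final infected set and penalising initially infected vertices that already have many infected neighbours.
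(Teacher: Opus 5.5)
\textbf{The proposal never exhibits the set, and the set is the whole proof.} Your outline has the same shape as the paper's proof: an explicit list of $47$ vectors of $\{0,1\}^9$, followed by a machine check that their $4$-neighbour closure is all of $V(Q_9)$. Two of your preliminary steps are correct. Theorem~\ref{th:MN} evaluates to $46.75$ at $d=9$, so $47$ is forced. Your reasons for setting aside Lemma~\ref{lem:meta} also hold up: with $\ell_0^4$ from Lemma~\ref{lem:Q4meta} and blocks from $Q_5$ one gets only $14+3\cdot 8+3\cdot 4+1=51$.

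The lemma is a bare existence claim. Saying that a local search ``should reach a percolating configuration'' gives no reason to believe that any $47$-element subset of $V(Q_9)$ percolates. This is not a formality, for three reasons:
\begin{enumerate}[(i)]
\item $47$ is exactly the Morrison--Noel bound, so there is no slack at all.
\item The same bound is \emph{not} attained for $d=5$, where $14$ vertices are needed rather than $13$.
\item The search of B\'erczi and Wagner~\cite{BercziWagner24} reached only $48$ for $d=9$.
\end{enumerate}
A search heuristic, however carefully designed, is a way of possibly finding a proof, not a proof.

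To close the gap you must write down the $47$ vectors; the paper lists them, and they were found with AlphaEvolve. You then run the simulation on the $512$ vertices of $Q_9$, which is the only step of your proposal that is actually complete. Short of that, you would need some other explicit construction, and you have yourself set aside the available meta-labeling route for $k\le 4$.
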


\begin{proof}
Define
\begin{align*}
S_4^9:=
\{&(1,1,0,0,0,0,0,0,0),
(0,0,0,0,0,0,0,1,1),
(1,0,0,0,0,1,0,0,0),
(0,1,0,0,0,1,0,1,1),
\\&(0,1,0,0,1,0,0,0,0),
(1,0,0,0,1,0,0,1,1),
(0,0,0,0,1,0,1,0,0),
(1,1,0,0,1,0,1,0,0),
\\&(0,1,0,0,1,1,0,1,0),
(1,0,0,0,1,1,1,0,0),
(0,1,0,1,0,0,0,0,1),
(0,0,0,1,0,0,0,1,0),
\\&(0,0,0,1,0,0,1,0,1),
(1,1,0,1,0,1,0,0,0),
(0,0,0,1,0,1,0,0,0),
(0,1,0,1,0,1,0,1,0),
\\&(1,0,0,1,0,1,0,1,1),
(1,0,0,1,0,1,1,0,0),
(0,1,0,1,0,1,1,0,1),
(1,1,0,1,1,0,0,0,0),
\\&(1,0,0,1,1,0,0,1,0),
(0,1,0,1,1,0,0,1,1),
(0,1,0,1,1,0,1,0,0),
(0,0,0,1,1,0,1,1,1),
\\&(0,1,0,1,1,1,0,0,0),
(1,0,0,1,1,1,0,0,1),
(0,0,0,1,1,1,0,1,1),
(0,0,0,1,1,1,1,1,0),
\\&(1,0,0,1,1,1,1,1,1),
(0,1,1,0,0,0,0,0,0),
(0,0,1,0,0,0,0,1,0),
(1,1,1,0,0,0,0,1,0),
\\&(1,1,1,0,0,1,0,0,0),
(0,0,1,0,0,1,0,0,1),
(1,0,1,0,0,1,1,1,1),
(0,0,1,0,1,0,0,0,0),
\\&(0,1,1,0,1,0,0,1,1),
(1,1,1,0,1,0,1,0,1),
(0,1,1,0,1,1,0,0,0),
(1,0,1,0,1,1,0,0,0),
\\&(0,0,1,1,0,0,0,0,0),
(1,0,1,1,0,1,0,0,0),
(0,0,1,1,0,1,0,1,0),
(0,0,1,1,0,1,1,0,1),
\\&(0,0,1,1,1,0,1,0,0),
(0,1,1,1,1,0,1,1,0),
(0,1,1,1,1,1,1,0,1)
\}.
\end{align*}
Clearly, $|S_4^9|=47$. For a verification program showing that $S_4^9$ percolates under the $4$-neighbour bootstrap process in $Q_9$, see this \href{https://colab.research.google.com/drive/1_owaZNVjQTtzwKwdJ38Z51VGLKjKb-Fd?usp=sharing}{Google Colab Notebook} and the ancillary file \texttt{FourNeighbourBootstrapVerification.ipynb} uploaded with the arxiv version of this paper. 
\end{proof}

The next construction is for $d=10$. This improves the construction found by B\'erczi and Wagner~\cite{BercziWagner24} by two elements and matches the lower bound in Theorem~\ref{th:MN}.

\begin{lem}
\label{lem:Q10}
There exists a percolating set $S_4^{10}$ of cardinality $61$ for the $4$-neighbour bootstrap process in $Q_{10}$. 
\end{lem}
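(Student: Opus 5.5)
The most direct route is the one used for Lemmas~\ref{lem:Q5}--\ref{lem:Q9}. I would exhibit an explicit set $S_4^{10}\subseteq V(Q_{10})$ of $61$ vertices and certify by computer that it percolates under the $4$-neighbour bootstrap process. The certificate is simply to iterate $A_t:=A_{t-1}\cup\{v:|N(v)\cap A_{t-1}|\geq 4\}$ until it stabilises and check that all $1024$ vertices are infected. The count matches the lower bound, since $\frac{10(100+30+14)}{24}+1=61$. The real work is therefore finding the set, which I would do by a guided local search seeded from a structured starting point, in the spirit of the AlphaEvolve searches described in the introduction.

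To make the search tractable, and to get a more conceptual construction, I would first try to build $S_4^{10}$ from Lemma~\ref{lem:meta} with $r=4$, $k=4$ and blocks in $Q_6$. The available blocks are:
\begin{itemize}
\item $S_4:=S_4^6$ from Lemma~\ref{lem:Q6}, of size $18$;
\item $S_1^6\subseteq S_2^6\subseteq S_3^6$ from Lemma~\ref{lem:r=3}, of sizes $1$, $4$ and $10$.
\end{itemize}
Conditions (a) and (c) of Lemma~\ref{lem:meta} then hold automatically. Writing $n_i:=|\ell_0^{-1}(i)|$, we need a labeling $\ell_0:V(Q_4)\to\{0,\dots,4\}$ with
\[18n_4+10n_3+4n_2+n_1=61\]
that percolates under the $4$-meta bootstrap process on $Q_4$.

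Natural candidate profiles are $(n_4,n_3,n_2,n_1)=(1,3,3,1)$, $(2,2,1,1)$ and $(1,2,7,3)$. I would start with $(1,3,3,1)$, placing the $4$-label at $0^4$, the $3$-labels on some weight-$2$ vertices, and the lower labels on weight-$2$ and weight-$3$ vertices. This echoes the $k=3$ labeling $\ell_0^3$ used in the proof of Lemma~\ref{lem:r=3}. There are only $5^{16}$ labelings, and restricting to a fixed profile modulo the symmetries of $Q_4$ makes exhaustive search over these profiles feasible. Percolation of any candidate can be checked mechanically.

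The main obstacle is that no labeling with these block sizes may percolate. The meta process is considerably weaker than true $4$-neighbour percolation, because it ignores interactions between partially infected subcubes. If this is the case, I would try the alternative splittings:
\begin{itemize}
\item $k=3$, with blocks in $Q_7$ of sizes $26$, $13$, $5$, $1$;
\item $k=5$, with blocks in $Q_5$ of sizes $14$, $8$, $4$, $1$.
\end{itemize}
The second option is less promising, since $S_4^5$ is already one above optimal. Failing all of these, I would abandon the meta framework and search directly in $Q_{10}$ for a $61$-element set. I would seed the search with the union of the near-miss meta constructions and perform swap moves that maximise the size of the final closure, exactly as for the smaller seeds. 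In either case the final proof consists of listing the set, or the labeling, and pointing to the verification program.
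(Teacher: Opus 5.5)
\textbf{The gap: no witness is exhibited.} As written, your proposal is a search plan rather than a proof. Lemma~\ref{lem:Q10} is a pure existence statement, so its whole content is the witness, and you never produce one; you even allow that your preferred route might fail. Until a labeling or a set is actually exhibited and checked, nothing is proved.

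\textbf{How to close it.} Your first route works, and the missing object is already in the paper. The labeling $\ell_0^4$ of Lemma~\ref{lem:Q4meta}, which does not depend on Lemma~\ref{lem:Q10}, has profile $(n_4,n_3,n_2,n_1)=(1,3,3,1)$. Feed it into Lemma~\ref{lem:meta} with $S_4^6$ from Lemma~\ref{lem:Q6} and $S_1^6\subseteq S_2^6\subseteq S_3^6$ from Lemma~\ref{lem:r=3}. This gives a percolating set of size $18+3\cdot 10+3\cdot 4+1=61$. It is exactly the $d=10$ instance of Case~1 in the proof of Theorem~\ref{th:main}.

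The labeling can even be checked by hand.
\begin{itemize}
\item Translate by $0110$, an automorphism of $Q_4$. Label $4$ is then at $0000$, labels $3$ at $1100,1010,1001$, labels $2$ at $0110,0011,1111$, and label $1$ at $0101$.
\item $1000$ is promoted to $3$ and completed, which completes $1100,1010,1001$.
\item $0010,1110,1011$ are promoted to $2$ and completed, which completes $0110,0011,1111$.
\item $0100,0001,1101,0111$ are promoted to $1$ and completed.
\item Finally $0101$ is completed.
\end{itemize}
One of the $2$-labels sits at weight $4$, so this is not quite the shape you guessed, but your exhaustive search over the profile would find it. Also, $(1,2,7,3)$ is not an admissible profile: it gives $18+20+28+3=69$. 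Perhaps you meant $(1,2,5,3)$.

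\textbf{Comparison with the paper.} The paper's own proof of Lemma~\ref{lem:Q10} is different. It lists an explicit $61$-element set found by AlphaEvolve and verifies percolation on all $1024$ vertices by computer, treating $d=10$ as an independent seed. Your completed route gives a more conceptual certificate that is far cheaper to check: a $16$-vertex labeling on top of the already-verified $S_4^6$ and $S_3^6$. It also shows that $d=10$ is redundant as a seed. The direct listing buys independence from those other constructions, at the price of an opaque witness that only brute force can certify.
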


\begin{proof}
Define
{\footnotesize
\begin{align*}
S_4^{10}:=
\{&
(0,1,1,0,0,0,0,1,1,0),
(1,1,0,1,1,1,1,1,0,1),
(0,0,1,0,0,0,1,1,1,0),
(0,0,0,0,0,0,1,1,1,1),
\\&(1,0,1,0,0,1,0,1,0,0),
(1,0,1,0,0,1,1,0,1,1),
(0,1,0,0,0,1,1,1,0,1),
(1,0,0,0,0,1,1,1,1,1),
\\&(1,1,0,0,1,0,0,0,0,1),
(1,0,1,0,1,0,0,0,0,1),
(1,1,1,0,1,0,0,0,1,1),
(1,0,0,0,1,0,0,1,0,1),
\\&(1,0,1,0,1,0,0,1,1,1),
(1,1,0,0,1,0,1,0,0,0),
(1,0,0,0,1,0,1,0,0,1),
(0,1,1,0,1,0,1,0,1,1),
\\&(0,1,1,0,1,0,1,1,1,0),
(1,1,1,0,1,0,1,1,1,1),
(1,1,1,0,1,1,0,1,1,1),
(0,0,1,0,1,1,1,1,0,1),
\\&(1,0,1,0,1,1,1,1,1,0),
(0,0,0,0,1,1,1,1,1,1),
(0,1,0,1,0,0,0,0,0,0),
(0,1,1,0,1,1,1,1,1,1),
\\&(1,1,1,1,0,0,0,0,0,0),
(1,1,1,1,0,0,0,1,1,0),
(1,1,0,1,0,0,0,1,1,1),
(0,1,1,1,0,0,1,0,0,0),
\\&(0,0,0,1,0,0,1,0,1,1),
(1,0,1,1,0,0,1,0,1,1),
(1,1,0,1,0,0,1,0,1,1),
(0,0,0,1,0,0,1,1,1,0),
\\&(1,1,0,1,0,0,1,1,1,0),
(0,1,0,1,0,1,0,1,0,1),
(0,1,0,1,0,1,0,1,1,0),
(1,0,0,1,0,1,0,1,1,1),
\\&(1,1,1,1,0,1,0,1,1,1),
(0,0,1,1,0,1,1,0,0,0),
(1,0,1,1,0,1,1,1,1,1),
(0,1,1,1,0,1,1,1,1,1),
\\&(1,1,1,1,1,0,0,0,0,1),
(1,0,1,1,1,0,0,1,0,1),
(0,0,0,1,1,0,0,1,1,0),
(1,1,1,1,1,0,1,0,0,0),
\\&(0,0,1,1,1,0,1,0,0,0),
(0,1,1,1,1,0,1,0,0,1),
(1,0,1,1,1,0,1,0,1,0),
(1,0,0,1,1,0,1,0,1,1),
\\&(1,1,1,1,1,0,1,1,0,1),
(1,1,1,1,1,0,1,1,1,0),
(1,0,1,1,1,0,1,1,1,1),
(0,1,0,1,1,0,1,0,1,1),
\\&(1,0,1,1,1,1,0,0,0,1),
(0,0,0,1,1,0,1,1,1,1),
(0,0,1,1,1,1,1,0,1,0),
(1,1,0,1,1,1,0,1,1,1),
\\&(0,0,1,1,1,1,1,0,0,1),
(1,0,0,1,1,1,1,0,1,0),
(0,1,1,1,1,1,1,0,1,1),
(1,0,1,1,1,1,1,1,0,1),
\\&(1,0,0,1,1,1,1,1,1,1)
\}
\end{align*}}
Clearly, $|S_4^{10}|=61$. For a verification program showing that $S_4^{10}$ percolates under the $4$-neighbour bootstrap process in $Q_{10}$, see this \href{https://colab.research.google.com/drive/1_owaZNVjQTtzwKwdJ38Z51VGLKjKb-Fd?usp=sharing}{Google Colab Notebook} and the ancillary file \texttt{FourNeighbourBootstrapVerification.ipynb} uploaded with the arxiv version of this paper. 
\end{proof}

The next construction is for $d=11$. This matches the construction found by B\'erczi and Wagner~\cite{BercziWagner24} by one element and matches the lower bound in Theorem~\ref{th:MN}.

\begin{lem}
\label{lem:Q11}
There exists a percolating set $S_4^{11}$ of cardinality $78$ for the $4$-neighbour bootstrap process in $Q_{11}$. 
\end{lem}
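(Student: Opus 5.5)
The plan is to exhibit an explicit set $S_4^{11}\subseteq V(Q_{11})$ of cardinality $78$ and verify by direct simulation that it percolates, as in Lemmas~\ref{lem:Q5}--\ref{lem:Q10}. First note the target: $\frac{11(121+33+14)}{24}+1=\frac{11\cdot 168}{24}+1=78$, so such a set meets the bound of Theorem~\ref{th:MN} and no slack is available. A purely random or greedy search is therefore unlikely to succeed. I would first try to obtain the set in a structured way via Lemma~\ref{lem:meta}, and fall back on a computer search in $Q_{11}$ if that fails.

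For the structured attempt I would take $k=4$ and $d-k=7$, so that the building blocks are $S_1^7,S_2^7,S_3^7$ from Lemma~\ref{lem:r=3} and $S_4^7$ from Lemma~\ref{lem:Q7}. These have sizes $1,5,13,26$, and condition~\ref{eq:nested} of Lemma~\ref{lem:meta} holds by Lemma~\ref{lem:r=3}. The search then reduces to finding a labeling $\ell_0:V(Q_4)\to\{0,1,2,3,4\}$ that percolates under the $4$-meta bootstrap process and satisfies
\[|\ell_0^{-1}(1)|+5|\ell_0^{-1}(2)|+13|\ell_0^{-1}(3)|+26|\ell_0^{-1}(4)|=78.\]
A natural profile to try first is one vertex of label $4$, two of label $3$, four of label $2$ and six of label $1$. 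The labelings of $Q_4$ form a space of size $5^{16}$, and symmetry reduction plus pruning by the count constraint makes this search feasible. Each candidate is tested by running Rules~1 and~2 to termination.

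I would also try $k=5$ with $d-k=6$, using Lemma~\ref{lem:Q6} and blocks of sizes $1,4,10,18$. If some labeling works, Lemma~\ref{lem:meta} immediately gives percolation, and the cardinality is the displayed sum. As a sanity check, I would still simulate the resulting explicit subset of $V(Q_{11})$ under the $4$-neighbour process using the verification notebook.

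The main obstacle is that the meta-process is lossy compared with the true $4$-neighbour process, so an exactly tight labeling of $Q_4$ or $Q_5$ may not exist. If the search finds none, I would search directly in $Q_{11}$ by local search or evolutionary search, as with the other seed constructions found using AlphaEvolve. I would seed it with near-optimal sets built from Lemma~\ref{lem:meta}, for example of size $79$ or $80$, and then delete and swap vertices while tracking the size of the final closure. The final certificate is the explicit list of $78$ vectors together with the computer check that its closure under the $4$-neighbour process is all of $V(Q_{11})$.
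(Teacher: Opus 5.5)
Your proposal never produces the object whose existence is asserted, and that is the gap. The lemma is a pure existence statement. The paper's proof is entirely the witness: an explicit list of $78$ vectors in $V(Q_{11})$, together with a computer check that its closure under the $4$-neighbour process is all of $V(Q_{11})$. You describe three ways one might search for such a witness and say the final certificate will be such a list. But you exhibit neither a set nor a percolating meta-labeling, and nothing you write shows that any of the searches succeeds. As it stands nothing has been proved; the missing step is exactly the construction.

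Moreover, your primary structured route cannot work for any profile, not just the $(a_1,a_2,a_3,a_4)=(6,4,2,1)$ you suggest. Here is the argument.
\begin{enumerate}[(i)]
\item Suppose $\ell_0$ percolates under the $4$-meta process on $Q_4$, with $a_i=|\ell_0^{-1}(i)|$ and $a_1+5a_2+13a_3+26a_4=78$.
\item Feed the same $\ell_0$, with the six-dimensional blocks of Lemma~\ref{lem:r=3} and Lemma~\ref{lem:Q6} (sizes $1,4,10,18$), into Lemma~\ref{lem:meta}. This gives a percolating set in $Q_{10}$, so Theorem~\ref{th:MN} yields $a_1+4a_2+10a_3+18a_4\geq 61$.
\item Subtracting this from the equation in (i) gives $8a_4+3a_3+a_2\leq 17$, hence $32a_4+12a_3+4a_2\leq 68$.
\item Since $Q_4$ has $16$ vertices, $a_1\leq 16-a_2-a_3-a_4$. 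Substituting into the equation in (i) gives $25a_4+12a_3+4a_2\geq 62$.
\item Together, (iii) and (iv) force $7a_4\leq 6$, so $a_4=0$.
\item With $a_4=0$, label $4$ never appears: Rule~2 only copies an existing neighbouring label, and Rule~1 needs a neighbour of label $4$. So $\ell_0$ does not percolate, a contradiction.
\end{enumerate}

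The $k=5$ route with six-dimensional blocks is not excluded by this kind of counting. For instance, the counts $(a_1,a_2,a_3,a_4)=(4,4,4,1)$ give exactly $78$ and pass the analogous tests against Theorem~\ref{th:MN}. However, you give no such labeling of $Q_5$. Everything therefore rests on your fallback search in $Q_{11}$, and its output is precisely what is missing.
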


\begin{proof}
Define
{\scriptsize
\begin{align*}
S_4^{11}:=
\{&(1,1,0,0,0,0,0,0,1,0,0),
(1,1,0,0,0,0,0,1,0,0,1),
(1,1,0,0,0,0,0,1,0,1,0),
(1,0,0,0,0,0,0,1,1,0,0),
\\&(1,1,0,0,0,0,1,0,1,1,0),
(0,1,0,0,0,0,1,1,0,1,0),
(1,1,0,0,0,1,0,0,0,0,0),
(1,1,0,0,0,1,0,0,1,1,0),
\\&(1,0,0,0,0,1,0,1,0,0,0),
(1,1,0,0,0,1,0,1,0,1,1),
(1,1,0,0,0,1,0,1,1,0,0),
(1,1,0,0,0,1,1,0,0,1,0),
\\&(0,1,0,0,0,1,1,0,1,0,0),
(0,0,0,0,0,1,1,0,1,1,0),
(1,1,0,0,0,1,1,1,0,0,1),
(0,0,0,0,0,1,1,1,0,1,0),
\\&(0,0,0,0,0,1,1,1,0,0,1),
(1,0,0,0,1,1,0,1,0,1,0),
(0,0,0,0,1,1,0,1,0,1,1),
(0,1,0,0,1,1,0,1,0,1,0),
\\&(0,1,0,0,1,1,1,0,1,1,1),
(1,1,0,0,1,1,1,1,0,1,1),
(1,1,0,1,0,0,0,0,1,1,0),
(1,0,0,1,0,0,0,1,0,1,0),
\\&(1,1,0,1,0,0,0,1,0,1,1),
(0,0,0,1,0,0,0,1,1,1,0),
(0,1,0,1,0,0,0,1,1,1,1),
(0,1,0,1,0,0,1,0,0,1,1),
\\&(1,1,0,1,0,0,1,1,1,0,0),
(1,0,0,1,0,1,0,0,0,1,0),
(0,1,0,1,0,1,0,0,0,1,1),
(0,1,0,1,0,1,0,0,1,1,0),
\\&(1,1,0,1,0,1,0,1,0,0,0),
(0,0,0,1,0,1,0,1,0,1,0),
(1,1,0,1,0,1,1,0,0,0,0),
(0,0,0,1,0,1,1,0,0,1,0),
\\&(0,1,0,1,0,1,1,1,0,0,0),
(1,1,0,1,0,1,1,1,0,1,0),
(1,0,0,1,0,1,1,1,0,1,1),
(0,1,0,1,0,1,1,1,1,1,0),
\\&(1,1,0,1,0,1,1,1,1,1,1),
(1,1,0,1,1,0,0,0,0,0,0),
(0,0,0,1,1,0,0,0,0,1,0),
(1,1,0,1,1,0,0,1,0,0,1),
\\&(1,1,0,1,1,0,0,1,0,1,0),
(0,1,0,1,1,0,0,1,1,0,0),
(1,0,0,1,1,0,0,1,1,1,0),
(0,1,0,1,1,0,1,1,1,1,0),
\\&(0,0,0,1,1,1,0,0,0,0,0),
(0,1,0,1,1,1,0,1,0,1,1),
(0,1,0,1,1,1,0,1,1,1,0),
(0,1,0,1,1,1,1,0,0,1,0),
\\&(1,1,1,0,0,0,1,0,0,1,0),
(1,0,1,0,0,0,1,0,1,1,0),
(1,0,1,0,0,0,1,1,0,0,1),
(1,0,1,0,0,0,1,1,1,0,0),
\\&(0,1,1,0,0,1,0,0,1,1,0),
(1,0,1,0,0,1,0,0,1,1,0),
(1,0,1,0,0,1,0,1,0,0,1),
(0,0,1,0,0,1,1,0,0,1,0),
\\&(0,0,1,0,0,1,1,0,1,0,0),
(1,1,1,0,0,1,1,1,0,0,0),
(0,1,1,0,0,1,1,1,0,1,0),
(1,1,1,0,0,1,1,1,0,1,1),
\\&(1,0,1,0,1,0,1,0,0,0,1),
(1,0,1,0,1,1,0,0,0,1,0),
(0,0,1,0,1,1,0,0,1,0,1),
(1,1,1,0,1,1,0,1,0,1,1),
\\&(1,1,1,0,1,1,1,0,0,1,0),
(1,1,1,0,1,1,1,1,1,1,1),
(0,1,1,1,0,0,0,0,1,1,1),
(0,1,1,1,0,0,1,0,0,0,0),
\\&(1,0,1,1,0,1,1,0,0,1,0),
(0,1,1,1,0,1,1,0,1,0,0),
(0,0,1,1,0,1,1,0,1,1,0),
(1,0,1,1,0,1,1,1,1,1,1),
\\&(1,0,1,1,1,0,0,0,1,0,0),
(0,1,1,1,1,1,0,0,0,1,0)
\}.
\end{align*}}
Clearly, $|S_4^{11}|=78$. For a verification program showing that $S_4^{11}$ percolates under the $4$-neighbour bootstrap process in $Q_{11}$, see this \href{https://colab.research.google.com/drive/1_owaZNVjQTtzwKwdJ38Z51VGLKjKb-Fd?usp=sharing}{Google Colab Notebook} and the ancillary file \texttt{FourNeighbourBootstrapVerification.ipynb} uploaded with the arxiv version of this paper. 
\end{proof}

The next construction is for $d=12$. This improves the construction found by B\'erczi and Wagner~\cite{BercziWagner24} by one element and matches the lower bound in Theorem~\ref{th:MN}.

\begin{lem}
\label{lem:Q12}
There exists a percolating set $S_4^{12}$ of cardinality $98$ for the $4$-neighbour bootstrap process in $Q_{12}$. 
\end{lem}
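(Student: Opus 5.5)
The natural route is the same as for Lemmas~\ref{lem:Q6}--\ref{lem:Q11}. I would exhibit an explicit set $S_4^{12}\subseteq V(Q_{12})$ with $|S_4^{12}|=98$ and certify by direct simulation that it percolates under the $4$-neighbour bootstrap process. As a sanity check on the target, the bound in Theorem~\ref{th:MN} for $r=4$, $d=12$ is $\frac{12(144+36+14)}{24}+1=97+1=98$. So any valid set of this size is optimal, and no slack is available.

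\textbf{Structured attempt via Lemma~\ref{lem:meta}.} Before resorting to blind search, I would try to build the set with $k=4$. Take $S_1^8,S_2^8,S_3^8$ from Lemma~\ref{lem:r=3}, of sizes $1,5,16$ and nested as condition~\ref{eq:nested} requires, and $S_4^8$ from Lemma~\ref{lem:Q8}, of size $35$. A labeling $\ell_0$ of $Q_4$ with counts $a_4,a_3,a_2,a_1$ of labels $4,3,2,1$ gives a set of size $35a_4+16a_3+5a_2+a_1$. The choice $a_4=1$, $a_3=3$, $a_2=3$, $a_1=0$ gives exactly $35+48+15=98$; other decompositions, such as $a_4=1$, $a_3=3$, $a_2=2$, $a_1=5$, also work.

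I would then search over labelings of $Q_4$ with these label counts, up to the symmetries of $Q_4$, for one that percolates under the $4$-meta bootstrap process. This is a small finite search. If one exists, Lemma~\ref{lem:meta} immediately gives the required percolating set, and the only thing to check by computer is the meta-process on $Q_4$.

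\textbf{Fallback: direct search.} If no such labeling of $Q_4$ exists, I would repeat the attempt with $k=6$ over $Q_6$-labelings, where $d-k=6$ and the block sizes are $1,4,10,18$. If that also fails, I would revert to a direct search for a $98$-element set in $Q_{12}$, as was done for the earlier seeds. I would seed that search with near-miss constructions, such as a meta-construction of size $99$ or $100$, and locally modify them to remove elements. I would then verify the resulting explicit list with the same verification program used for the previous lemmas, namely iterating $A_t$ until stabilization and checking that $A_t=V(Q_{12})$.

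\textbf{Main obstacle.} The hard step is existence. Because the set is tight against Theorem~\ref{th:MN}, the structured meta-labelings may simply fail to percolate: every vertex must be used efficiently, which is why B\'erczi and Wagner only reached $99$. I would therefore expect to rely on a heuristic or evolutionary search, in the spirit of AlphaEvolve, for the final explicit set. The proof itself then reduces to listing the set and performing the computer verification.
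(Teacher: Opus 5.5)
Your proposal does not prove the lemma. The statement is a pure existence claim. The paper's proof consists entirely of an explicit $98$-element set together with a computer check that it percolates. You never produce a set or a labeling, so the decisive step is replaced by a description of a search.

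Moreover, the structured route you put first cannot succeed. A percolating $4$-meta labeling of $Q_4$ can be fed into Lemma~\ref{lem:meta} with \emph{every} $d-k\geq 4$, not only $d-k=8$, and each output must respect Theorem~\ref{th:MN}.

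\begin{itemize}
\item For your counts $(a_4,a_3,a_2,a_1)=(1,3,3,0)$, take $d-k=6$ with blocks of sizes $18,10,4,1$ (Lemmas~\ref{lem:Q6} and~\ref{lem:r=3}). This would give a percolating set of size $18+30+12=60$ in $Q_{10}$, below the bound $61$.
\item For $(1,3,2,5)$, take $d-k=10$ with blocks $61,23,6,1$ (Lemma~\ref{lem:Q10}). This would give $61+69+12+5=147<148$ in $Q_{14}$.
\end{itemize}

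The same test at $d-k\in\{6,10\}$ eliminates every count vector, once combined with three further facts:
\begin{itemize}
\item the budget $35a_4+16a_3+5a_2+a_1\leq 98$, where these block sizes are the minimum possible for $Q_8$;
\item the need for $a_4\geq 1$, since otherwise label $4$ never appears;
\item the vertex count $|V(Q_4)|=16$.
\end{itemize}
So no $k=4$ construction over $Q_8$ reaches $98$. The best is the paper's $\ell_0^4$ from Lemma~\ref{lem:Q4meta}, which has exactly your counts plus one vertex of label $1$ and yields $99$. The lost unit comes from rounding, $|S_3^8|=\lceil 88/6\rceil+1=16$. This is why the paper applies $\ell_0^4$ only when $6\mid d-4$.

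Your $k=6$ fallback is not excluded by this counting. For instance, counts $(1,5,7,2)$ give $18+50+28+2=98$ and pass the analogous tests. However, you give neither such a labeling of $Q_6$ nor any reason one should exist, and your final fallback is just ``search until something is found.''

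To turn the plan into a proof, you must supply a concrete witness together with its verification. That means either an explicit $98$-element subset of $V(Q_{12})$, as the paper does (found with AlphaEvolve and checked by the accompanying notebook), or an explicit percolating $4$-meta labeling of $Q_6$ with suitable label counts.
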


\begin{proof}
Define
{\scriptsize
\begin{align*}
S_4^{12}:=
\{
&(0,1,1,0,0,0,0,0,0,0,1,0),
(0,0,1,0,0,0,0,0,0,1,1,0),
(0,1,0,0,0,0,0,0,1,1,1,1),
(1,1,1,0,0,0,1,0,0,0,0,1),
\\&(1,0,0,0,0,1,0,0,0,0,0,0),
(1,1,1,0,0,1,0,0,0,0,0,1),
(0,0,1,0,0,1,0,0,0,0,0,1),
(0,0,0,0,0,1,0,0,0,1,1,0),
\\&(1,0,1,0,0,1,0,0,1,0,0,1),
(0,0,1,0,0,1,0,0,1,0,1,1),
(1,1,1,0,0,1,0,0,1,0,1,1),
(0,0,0,0,0,1,0,0,1,1,1,1),
\\&(0,1,1,0,0,1,0,0,1,1,1,1),
(0,1,1,0,0,1,0,1,0,0,1,0),
(1,1,1,0,0,1,0,1,0,0,1,1),
(1,1,0,0,0,1,0,1,0,0,1,0),
\\&(0,0,0,0,0,1,0,1,0,1,0,1),
(0,1,0,0,0,1,0,1,0,1,1,0),
(0,0,1,0,0,1,0,1,0,1,1,0),
(0,1,1,0,0,1,0,1,1,1,0,0),
\\&(0,1,1,0,0,1,1,0,1,1,0,1),
(1,0,0,0,0,1,1,0,1,1,1,1),
(0,0,0,0,1,0,0,0,0,1,1,0),
(0,0,1,0,1,0,0,0,1,1,1,0),
\\&(0,0,1,0,1,0,0,1,0,1,1,0),
(0,0,0,0,1,0,1,0,1,1,1,0),
(1,0,0,0,1,0,1,1,1,0,1,0),
(0,0,0,0,1,1,0,0,0,1,1,1),
\\&(0,1,1,0,1,1,0,0,1,0,1,1),
(0,0,0,0,1,1,0,0,1,1,1,0),
(1,0,0,0,1,1,0,0,1,1,1,1),
(0,0,1,0,1,1,1,0,0,0,0,1),
\\&(0,0,1,0,1,1,1,0,0,1,1,1),
(1,0,0,0,1,1,1,0,0,1,1,1),
(0,0,1,0,1,1,1,0,1,1,0,1),
(1,0,1,1,0,0,0,0,0,1,1,0),
\\&(1,1,1,1,0,0,0,0,0,1,1,1),
(0,1,1,1,0,0,0,0,0,1,1,0),
(0,0,1,1,0,0,0,1,0,1,1,0),
(1,1,1,1,0,0,0,1,0,1,1,0),
\\&(1,0,0,1,0,0,1,0,0,1,1,0),
(1,0,1,1,0,0,1,0,0,1,1,1),
(0,0,1,1,0,1,0,0,0,0,1,1),
(1,1,0,1,0,1,0,0,0,1,0,0),
\\&(1,1,1,1,0,1,0,0,0,0,1,1),
(0,1,0,1,0,1,0,0,0,1,1,0),
(0,1,1,1,0,1,0,0,0,1,1,1),
(1,1,1,1,0,1,0,0,0,1,0,1),
\\&(0,0,0,1,0,1,0,0,0,1,1,1),
(0,0,0,1,0,1,0,0,1,1,1,0),
(1,1,1,1,0,1,0,1,0,0,1,0),
(1,0,0,1,0,1,0,1,0,1,0,0),
\\&(1,1,0,1,0,1,0,1,0,1,1,0),
(0,1,1,1,0,1,0,1,0,1,1,0),
(1,0,1,1,0,1,0,1,1,1,0,0),
(1,1,1,1,0,1,1,0,0,0,0,1),
\\&(0,1,1,1,0,1,1,0,0,0,1,1),
(1,0,1,1,0,1,1,0,0,1,0,1),
(1,0,0,1,0,1,1,0,0,1,1,1),
(1,0,0,1,0,1,1,1,1,1,1,1),
\\&(0,0,1,1,1,0,0,0,0,1,0,1),
(0,1,0,1,1,0,0,0,0,1,0,1),
(1,0,1,1,1,0,0,0,1,1,0,1),
(1,1,1,1,1,0,0,1,0,0,1,0),
\\&(0,1,0,1,1,0,0,1,0,0,1,0),
(1,0,1,1,1,0,0,1,0,0,1,1),
(1,0,1,1,1,0,0,1,0,1,1,0),
(1,1,0,1,1,0,0,1,0,1,1,0),
\\&(1,0,1,1,1,0,0,1,1,0,0,1),
(1,0,1,1,1,0,0,1,1,0,1,0),
(1,0,1,1,1,0,0,1,1,1,1,1),
(1,1,1,1,1,0,1,0,0,0,0,0),
\\&(0,0,0,1,1,0,1,0,0,1,1,0),
(1,0,1,1,1,0,1,1,0,1,1,1),
(0,0,1,1,1,0,1,1,1,1,0,0),
(1,0,0,1,1,1,0,0,0,0,1,0),
\\&(0,0,0,1,1,1,0,0,0,0,1,1),
(0,1,0,1,1,1,0,0,0,0,1,0),
(0,1,1,1,1,1,0,0,0,1,0,1),
(1,0,1,1,1,1,0,0,1,0,0,1),
\\&(1,0,0,1,1,1,0,0,1,1,0,1),
(0,1,1,1,1,1,0,1,0,0,1,0),
(1,0,1,1,1,1,0,1,0,1,0,0),
(1,0,0,1,1,1,0,1,0,1,0,1),
\\&(1,0,0,1,1,1,0,1,0,1,1,0),
(0,0,1,1,1,1,0,1,0,1,1,0),
(1,0,1,1,1,1,0,1,1,1,0,1),
(1,0,0,1,1,1,0,1,1,1,1,1),
\\&(0,0,1,1,1,1,1,0,0,0,1,1),
(0,0,1,1,1,1,1,0,0,1,0,1),
(1,0,0,1,1,1,1,0,0,1,1,0),
(0,0,1,1,1,1,1,0,0,1,1,0),
\\&(0,0,0,1,1,1,1,0,1,1,1,0),
(0,0,1,1,1,1,1,1,0,0,0,1),
(1,1,0,1,1,1,1,1,0,1,0,0),
(1,0,0,1,1,1,1,1,0,1,1,1),
\\&(1,1,1,1,1,1,1,1,1,1,0,1),
(1,0,0,1,1,1,1,1,1,1,1,0)
\}.
\end{align*}}
Clearly, $|S_4^{12}|=98$. For a verification program showing that $S_4^{12}$ percolates under the $4$-neighbour bootstrap process in $Q_{12}$, see this \href{https://colab.research.google.com/drive/1_owaZNVjQTtzwKwdJ38Z51VGLKjKb-Fd?usp=sharing}{Google Colab Notebook} and the ancillary file \texttt{FourNeighbourBootstrapVerification.ipynb} uploaded with the arxiv version of this paper. 
\end{proof}

The next construction is for $d=13$. This matches the construction found by B\'erczi and Wagner~\cite{BercziWagner24} and the lower bound in Theorem~\ref{th:MN}.

\begin{lem}
\label{lem:Q13}
There exists a percolating set $S_4^{13}$ of cardinality $122$ for the $4$-neighbour bootstrap process in $Q_{13}$. 
\end{lem}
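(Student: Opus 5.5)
The plan is to exhibit an explicit set $S_4^{13}\subseteq V(Q_{13})$ with $|S_4^{13}|=122$ and certify by direct simulation that it percolates under the $4$-neighbour bootstrap process, exactly as in Lemmas~\ref{lem:Q5}--\ref{lem:Q12}. First note that $\frac{13(169+39+14)}{24}+1=121.25$, so Theorem~\ref{th:MN} forces $m(Q_{13};4)\geq 122$. Hence any valid construction of size $122$ is automatically optimal, and the whole task is to find one and check it.

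\textbf{Structured attempt.} Before resorting to a raw search, I would try to build the set from smaller blocks via Lemma~\ref{lem:meta} with $r=4$ and $k=4$. The inputs would be $S_1^9,S_2^9,S_3^9$ from Lemma~\ref{lem:r=3}, of sizes $1,6,19$, together with $S_4^9$ from Lemma~\ref{lem:Q9}, of size $47$. Lemma~\ref{lem:meta} then requires a percolating $4$-meta labeling $\ell_0$ of $Q_4$ satisfying
\[47|\ell_0^{-1}(4)|+19|\ell_0^{-1}(3)|+6|\ell_0^{-1}(2)|+|\ell_0^{-1}(1)|\le 122.\]
For example, the profile with two labels $4$, one $3$, one $2$ and three $1$'s has cost exactly $122$. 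Whether any such labeling percolates is a finite check over labelings of $16$ vertices, so I would search it exhaustively by computer. I expect the budget to be too tight for the $4$-meta process on $Q_4$ to percolate.

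\textbf{Fallback: direct search.} If no suitable labeling exists, I would search directly in $Q_{13}$ for a $122$-element set. The approach would be local search or an evolutionary method (AlphaEvolve/PatternBoost style), seeded from near-optimal structured sets such as Construction~\ref{const:BW} or the recursive sets above. The objective would be the number of vertices left uninfected at termination, and moves would swap individual vertices. The known B\'erczi--Wagner construction of size $122$ for $d=13$ could also be used directly as the witness.

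\textbf{Verification.} Once a candidate is in hand, verification is routine. I would list the $122$ vectors explicitly and run the bootstrap process on all $2^{13}=8192$ vertices, repeatedly infecting any vertex with at least $4$ infected neighbours until the process stabilizes. The set percolates if and only if all $8192$ vertices end up infected; this would be confirmed by the same notebook used for the earlier lemmas.

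\textbf{Main obstacle.} The real difficulty is the search step. The lower bound is met with zero slack, so the target sets are extremely rare, and the tightness of the budget is also why the structured meta-labeling attempt is likely to fail. Once a witness is found, the proof itself is the explicit list plus the computer check.
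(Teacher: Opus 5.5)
Your framework matches the paper's: its proof of this lemma is an explicit list of $122$ vectors of $\{0,1\}^{13}$ plus a computer check that the $4$-neighbour process started from them infects all $2^{13}$ vertices. The difficulty is that for an existence statement like this the witness \emph{is} the proof, and your proposal does not contain one. A local or evolutionary search that has not been run proves nothing, and you yourself call it the real difficulty. What actually proves the lemma is your passing remark that B\'erczi and Wagner already found a $122$-element percolating set in $Q_{13}$; the paper notes that its own set matches theirs in size. Citing that set, or writing it down and re-running the check, is a complete proof, though not a self-contained one. It should be your main line rather than a fallback.

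Your structured branch is not merely unlikely; it is provably impossible, with no search needed. Put $a_i:=|\ell_0^{-1}(i)|$. A percolating $4$-meta labeling needs $a_4\geq 1$, since neither rule can create a label $4$ when none is present, and your budget forces $a_4\leq 2$. Any such labeling, fed into Lemma~\ref{lem:meta} with smaller blocks, must respect Theorem~\ref{th:MN}. For example, your sample profile $(a_4,a_3,a_2,a_1)=(2,1,1,3)$ with $d-k=4$ (blocks of sizes $8,6,3,1$) would give a percolating set in $Q_8$ of size $28<35$.

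In general, use $d-k=6$ (blocks $18,10,4,1$, with $m(Q_{10};4)\geq 61$) and $d-k=12$ (blocks $98,31,7,1$, with $m(Q_{16};4)\geq 213$).
\begin{itemize}
\item If $a_4=2$: subtracting $10a_3+4a_2+a_1\geq 25$ from the budget $19a_3+6a_2+a_1\leq 28$ gives $9a_3+2a_2\leq 3$. Hence $a_1\geq 21$, more than the at most $14$ remaining vertices.
\item If $a_4=1$: the budget $19a_3+6a_2+a_1\leq 75$, together with $10a_3+4a_2+a_1\geq 43$ and $31a_3+7a_2+a_1\geq 115$, yields $9a_3+2a_2\leq 32$ and $12a_3+a_2\geq 40$. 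These have no solution in non-negative integers.
\end{itemize}
So the Lemma~\ref{lem:meta} route through $Q_4$ and $Q_9$ cannot reach $122$. Indeed, the paper's own $\ell_0^4$ gives exactly $47+3\cdot 19+3\cdot 6+1=123$ here, one too many.
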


\begin{proof}
Define
{\scriptsize
\begin{align*}
S_4^{13}:=
\{
&(0,0,1,0,0,0,0,0,0,1,1,1,0),
(0,0,1,0,0,0,0,1,0,0,0,0,0),
(1,0,1,0,0,0,0,1,0,0,0,1,1),
(0,0,1,0,0,0,0,1,0,0,1,0,1),
\\&(0,0,1,0,0,0,0,1,0,1,1,0,0),
(1,1,1,1,0,0,0,1,1,1,0,1,1),
(1,0,0,0,0,0,0,1,1,1,0,1,1),
(1,1,0,1,0,0,1,0,0,0,0,0,1),
\\&(1,1,0,1,0,0,1,0,1,0,0,0,0),
(1,1,1,1,0,0,1,0,1,0,0,0,1),
(1,0,1,1,0,0,1,0,1,0,0,1,1),
(1,0,1,1,0,0,1,0,1,1,0,0,1),
\\&(1,0,0,1,0,0,1,0,1,1,0,1,1),
(1,1,1,1,0,0,1,0,1,1,0,1,1),
(1,1,1,1,0,0,1,1,0,0,0,0,0),
(1,0,1,0,0,0,1,1,0,0,0,0,1),
\\&(1,0,0,1,0,0,1,1,0,0,0,0,1),
(0,0,0,0,0,0,1,1,0,0,0,0,0),
(1,0,1,0,0,0,1,1,0,0,0,1,0),
(0,0,1,0,0,0,1,1,0,1,0,0,0),
\\&(0,1,1,0,0,0,1,1,0,1,0,0,1),
(1,1,0,1,0,0,1,1,0,1,0,1,0),
(0,0,1,1,0,0,1,1,0,1,1,0,0),
(1,0,0,1,0,0,1,1,1,0,0,1,1),
\\&(0,0,1,1,0,0,1,1,1,0,0,1,1),
(0,0,0,0,0,1,0,0,0,1,0,1,1),
(0,1,1,0,0,1,0,0,1,0,0,0,0),
(0,1,1,1,0,1,0,0,1,0,0,0,1),
\\&(0,0,1,1,0,1,0,0,1,0,0,1,0),
(1,0,1,0,0,1,0,0,1,0,0,0,1),
(0,0,0,0,0,1,0,1,0,0,0,0,0),
(0,0,1,0,0,1,0,1,0,0,0,0,1),
\\&(0,0,1,1,0,1,0,1,0,0,0,1,0),
(0,0,1,1,0,1,0,1,0,0,0,1,1),
(0,0,1,1,0,1,0,1,0,0,1,0,0),
(0,1,1,0,0,1,0,1,0,0,1,1,0),
\\&(0,0,1,0,0,1,0,1,0,1,0,0,0),
(0,0,0,1,0,1,0,1,0,1,0,1,1),
(0,0,1,0,0,1,0,1,0,1,0,1,1),
(0,0,0,0,0,1,0,1,0,1,1,1,0),
\\&(1,1,1,0,0,1,0,1,1,0,0,0,0),
(1,0,0,1,0,1,0,1,1,0,0,0,1),
(1,0,1,1,0,1,0,1,1,0,0,1,1),
(0,0,0,1,0,1,0,1,1,0,0,1,1),
\\&(0,0,1,1,0,1,0,1,1,0,1,0,1),
(0,0,1,1,0,1,0,1,1,0,1,1,0),
(1,0,1,0,0,1,0,1,1,0,1,0,1),
(1,0,0,1,0,1,0,1,1,1,0,0,0),
\\&(0,0,0,0,0,1,0,1,1,1,0,0,1),
(0,0,0,1,0,1,0,1,1,1,0,1,0),
(1,1,0,0,0,1,0,1,1,1,0,1,1),
(1,0,1,0,0,1,0,1,1,1,1,0,0),
\\&(1,0,0,0,0,1,0,1,1,1,1,0,1),
(0,1,0,1,0,1,0,1,1,1,0,1,1),
(0,0,1,1,0,1,0,1,1,1,0,0,1),
(0,1,1,0,0,1,0,1,1,0,1,1,1),
\\&(1,1,1,1,0,1,1,0,0,0,0,0,1),
(1,0,1,1,0,1,1,0,0,1,0,0,1),
(0,1,1,0,0,1,1,0,0,1,0,0,1),
(1,1,1,0,0,1,1,0,0,1,1,0,0),
\\&(0,1,1,1,0,1,1,0,1,0,0,0,0),
(0,0,1,1,0,1,1,0,1,0,0,1,1),
(1,0,0,1,0,1,1,0,1,0,0,1,1),
(1,0,1,0,0,1,1,0,1,0,1,0,1),
\\&(0,1,1,0,0,1,1,0,1,0,1,1,0),
(0,0,1,1,0,1,1,0,1,0,1,0,1),
(0,1,1,0,0,1,1,0,1,0,1,0,1),
(0,0,1,1,0,1,1,0,1,1,0,0,1),
\\&(1,0,1,1,0,1,1,1,0,0,0,0,0),
(0,0,1,1,0,1,1,1,0,1,0,0,0),
(0,0,1,0,0,1,1,1,0,1,0,0,1),
(1,1,1,0,0,1,1,1,0,1,0,1,1),
\\&(1,1,1,1,0,1,1,1,1,0,0,0,0),
(1,0,1,1,0,1,1,1,1,0,0,0,1),
(0,0,1,0,0,1,1,1,1,0,0,0,0),
(0,1,1,1,0,1,1,1,1,0,0,0,1),
\\&(1,0,1,0,0,1,1,1,1,0,1,0,0),
(0,0,0,0,0,1,1,1,1,0,1,0,0),
(1,1,0,1,0,1,1,1,1,1,0,0,0),
(1,1,1,0,0,1,1,1,1,1,0,0,1),
\\&(0,1,1,1,0,1,1,1,1,1,0,1,1),
(0,1,0,1,1,0,0,1,1,0,1,1,0),
(0,1,1,1,1,0,0,1,1,1,1,0,0),
(1,1,1,1,1,0,0,1,1,1,1,1,0),
\\&(0,1,0,1,1,0,1,0,0,0,1,0,0),
(1,1,0,1,1,0,1,0,1,0,0,0,1),
(0,0,1,0,1,0,1,1,0,0,0,0,0),
(0,0,1,1,1,0,1,1,1,1,1,0,0),
\\&(1,1,1,1,1,1,0,0,1,1,0,0,1),
(0,0,0,1,1,1,0,1,0,0,0,0,0),
(1,0,1,0,1,1,0,1,0,0,0,0,0),
(0,1,1,1,1,1,0,1,0,0,1,0,0),
\\&(0,0,1,1,1,1,0,1,0,1,0,0,1),
(0,0,0,1,1,1,0,1,0,1,0,1,0),
(1,0,0,1,1,1,0,1,1,0,0,1,0),
(0,0,0,0,1,1,0,1,1,0,1,0,0),
\\&(0,0,1,1,1,1,0,1,1,0,1,0,0),
(0,0,1,0,1,1,0,1,1,0,1,1,0),
(1,0,0,0,1,1,0,1,1,0,1,1,0),
(0,0,0,1,1,1,0,1,1,1,0,0,0),
\\&(1,0,0,0,1,1,0,1,1,1,0,0,0),
(0,1,1,1,1,1,0,1,1,1,0,1,1),
(0,1,0,0,1,1,1,0,0,0,0,0,0),
(1,0,1,0,1,1,1,0,0,0,1,1,0),
\\&(1,1,1,1,1,1,1,0,0,1,0,0,1),
(1,1,0,1,1,1,1,0,1,0,0,0,0),
(0,1,1,1,1,1,1,0,1,0,0,1,1),
(0,1,1,1,1,1,1,0,1,1,0,0,0),
\\&(0,0,1,1,1,1,1,1,0,0,0,0,0),
(1,0,1,0,1,1,1,1,0,0,0,0,1),
(0,1,0,1,0,1,0,1,1,0,1,1,0),
(0,0,0,1,1,1,1,1,0,1,0,0,0),
\\&(0,0,1,1,1,1,1,1,0,1,1,0,0),
(0,1,0,1,0,1,0,1,1,1,0,0,0),
(1,0,1,1,1,1,1,1,1,0,0,0,0),
(0,0,0,0,1,1,1,1,1,0,0,0,0),
\\&(0,0,1,0,1,1,1,1,1,0,0,0,1),
(1,1,1,1,1,1,1,1,1,0,0,1,1),
(0,0,1,0,1,1,1,1,1,0,1,0,0),
(0,1,1,1,0,1,0,1,1,0,0,1,1),
\\&(1,0,1,0,1,1,1,1,1,1,1,0,1),
(0,1,1,0,0,1,0,1,1,1,0,0,0)
\}.
\end{align*}}
Clearly, $|S_4^{13}|=122$. For a verification program showing that $S_4^{13}$ percolates under the $4$-neighbour bootstrap process in $Q_{13}$, see this \href{https://colab.research.google.com/drive/1_owaZNVjQTtzwKwdJ38Z51VGLKjKb-Fd?usp=sharing}{Google Colab Notebook} and the ancillary file \texttt{FourNeighbourBootstrapVerification.ipynb} uploaded with the arxiv version of this paper. 
\end{proof}

The next construction is for $d=14$. This matches the lower bound in Theorem~\ref{th:MN}.

\begin{lem}
\label{lem:Q14}
There exists a percolating set $S_4^{14}$ of cardinality $148$ for the $4$-neighbour bootstrap process in $Q_{14}$. 
\end{lem}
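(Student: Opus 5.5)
The plan follows the pattern of Lemmas~\ref{lem:Q6}--\ref{lem:Q13}: exhibit an explicit set $S_4^{14}\subseteq V(Q_{14})$ of cardinality $148$ and certify percolation by direct simulation of the $4$-neighbour bootstrap process. First, note that $148=\frac{14(14^2+3\cdot 14+14)}{24}+1$, so the set must meet the bound of Theorem~\ref{th:MN} exactly. There is therefore no slack: every initially infected vertex has to be used efficiently.

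Listing $148$ vectors of length $14$ by hand is unwieldy, so I would first try to obtain the set in structured form via Lemma~\ref{lem:meta}. The natural choice is $r=4$ and $k=4$, with blocks in $Q_{10}$. For these take $S_1=S_1^{10}$, $S_2=S_2^{10}$ and $S_3=S_3^{10}$ from Lemma~\ref{lem:r=3}, which are nested and have sizes $1$, $6$ and $23$. For $S_4$ take the set $S_4^{10}$ of size $61$ from Lemma~\ref{lem:Q10}.

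Conditions \ref{eq:blocksPercolate} and \ref{eq:nested} of Lemma~\ref{lem:meta} then hold automatically. It remains to find a labeling $\ell_0:V(Q_4)\to\{0,\dots,4\}$ that percolates under the $4$-meta bootstrap process and satisfies
\[61a_4+23a_3+6a_2+a_1=148,\qquad a_i:=|\ell_0^{-1}(i)|.\]
The solutions with $a_4=1$ include $(a_3,a_2,a_1)=(3,3,0)$ and $(2,6,5)$, among others. Since $Q_4$ has only $16$ vertices, I would search exhaustively over labelings with these profiles. Each candidate is checked by running Rules~1 and~2 to termination.

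If no $k=4$ labeling works, I would try $k=6$ with blocks in $Q_8$. These blocks have sizes $35$, $16$, $5$ and $1$, coming from Lemma~\ref{lem:Q8} and Lemma~\ref{lem:r=3}. The equation becomes $35a_4+16a_3+5a_2+a_1=148$, and the search space $V(Q_6)$ is still modest.

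The main obstacle is this search step. It is not clear a priori that any meta labeling hits the exact count, because the meta process loses some efficiency compared with an unstructured optimum. If that route fails, I would fall back on a direct computer search (as with AlphaEvolve for the smaller cases) for an unstructured $148$-element set. One could seed it with the meta-construction from the best near-miss labeling and then apply local moves that delete a vertex and repair percolation.

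Once a candidate set is in hand, the proof itself is immediate. Either Lemma~\ref{lem:meta} applies directly, or percolation is verified by running the deterministic bootstrap process on all $2^{14}$ vertices with the same verification program cited for the earlier lemmas, and checking that the final infected set is $V(Q_{14})$.
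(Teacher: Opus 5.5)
\paragraph{Gap: no witness.} Your proposal does not prove the lemma. The whole content of the statement is the existence of one concrete $148$-element set, and you never produce one. You describe searches and note that a witness, once found, could be checked.

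\paragraph{The $k=4$ route is impossible.} Suppose $\ell_0$ percolates under the $4$-meta process on $Q_4$, with $a_i=|\ell_0^{-1}(i)|$ and $61a_4+23a_3+6a_2+a_1=148$. Then $a_4\ge 1$, since with no label-$4$ vertex no vertex ever acquires label $4$. Also $a_4\le 2$, since $3\cdot 61>148$. Because $Q_4$ has only $16$ vertices, the only solutions are
\[(a_4,a_3,a_2,a_1)\in\{(1,3,3,0),(1,3,2,6),(1,2,6,5),(1,1,10,4),(2,1,0,3),(2,0,4,2),(2,0,3,8)\}.\]
Feed the same $\ell_0$ into Lemma~\ref{lem:meta} with blocks of dimension $6$ (sizes $18,10,4,1$) or dimension $12$ (sizes $98,31,7,1$), where Lemma~\ref{lem:r=3} involves no rounding.
\begin{itemize}
\item For $a_4=1$ you get percolating sets in $Q_{16}$ of sizes $212,211,207,203<213$.
\item For $a_4=2$ you get percolating sets in $Q_{10}$ of sizes $49,54,56<61$.
\end{itemize}
Every case contradicts Theorem~\ref{th:MN}. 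The underlying reason is that $148$ equals the Morrison--Noel bound with no slack. Meanwhile $|S_3^{10}|=23$ exceeds $\frac{10\cdot 13}{6}+1$ by $\tfrac13$, and this loss is paid once per label-$3$ subcube. Indeed, the paper's own $\ell_0^4$ from Lemma~\ref{lem:Q4meta} yields $149$ here.

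\paragraph{The $k=6$ fallback.} Comparing with Theorem~\ref{th:MN} in $Q_{12}$ and $Q_{18}$ in the same way, every $k=6$ profile with $a_4=1$ fails as well. Again $|S_3^{8}|=16$ carries a $\tfrac13$ loss. So your fallback could only work with at least two fully infected subcubes, and you give no evidence that such a labeling exists.

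\paragraph{What the paper does.} What remains of your plan is the unstructured computer search, and that is exactly the paper's proof. It lists an explicit $148$-element subset of $V(Q_{14})$, found with AlphaEvolve, and certifies percolation with the cited verification program. This also explains why $d=14\equiv 2\pmod 6$ appears as a seed rather than as an output of Lemma~\ref{lem:meta}: the inductive steps in the proof of Theorem~\ref{th:main} only reach $d\equiv 0,4\pmod 6$. To have a proof, you must supply the witness set itself. The meta-labeling shortcut, in the forms you propose, is either impossible or unsupported in this dimension.
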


\begin{proof}
Define
{\tiny
\begin{align*}
S_4^{14}:=
\{
&(0, 0, 1, 0, 0, 0, 0, 0, 0, 0, 0, 0, 0, 0),
(0, 0, 1, 1, 0, 0, 0, 0, 0, 0, 0, 0, 0, 1),
(0, 0, 0, 1, 0, 0, 0, 0, 0, 0, 0, 0, 1, 0),
(0, 1, 1, 1, 1, 0, 0, 0, 0, 0, 0, 0, 0, 1),
\\&(0, 1, 1, 0, 1, 0, 0, 0, 0, 0, 0, 1, 0, 0),
(1, 0, 1, 0, 0, 0, 0, 0, 0, 0, 0, 1, 0, 0),
(0, 1, 1, 1, 1, 0, 0, 0, 0, 0, 0, 1, 1, 0),
(1, 0, 0, 1, 0, 0, 0, 0, 0, 0, 0, 1, 0, 0),
\\&(1, 0, 0, 1, 0, 0, 0, 0, 0, 0, 1, 0, 0, 0),
(0, 0, 0, 0, 1, 0, 0, 0, 0, 0, 1, 0, 0, 1),
(1, 1, 0, 0, 0, 0, 0, 0, 0, 0, 1, 0, 1, 0),
(0, 0, 0, 1, 0, 0, 0, 0, 0, 0, 1, 0, 1, 1),
\\&(1, 0, 0, 0, 0, 0, 0, 0, 0, 0, 1, 1, 0, 0),
(0, 1, 0, 0, 0, 0, 0, 0, 0, 0, 1, 1, 0, 1),
(0, 0, 0, 0, 0, 0, 0, 0, 0, 0, 1, 1, 1, 0),
(0, 1, 1, 0, 0, 0, 0, 0, 0, 0, 1, 1, 0, 0),
\\&(0, 0, 0, 1, 1, 0, 0, 0, 0, 1, 0, 0, 0, 0),
(1, 1, 0, 1, 1, 0, 0, 0, 0, 1, 0, 0, 0, 0),
(1, 0, 0, 1, 0, 0, 0, 0, 0, 1, 0, 0, 0, 0),
(0, 1, 1, 0, 0, 0, 0, 0, 0, 0, 1, 0, 1, 0),
\\&(1, 0, 0, 0, 0, 0, 0, 0, 0, 1, 0, 1, 0, 0),
(1, 0, 0, 1, 1, 0, 0, 0, 0, 1, 0, 1, 0, 1),
(0, 0, 1, 0, 0, 0, 0, 0, 0, 1, 0, 1, 0, 1),
(0, 0, 0, 1, 0, 0, 0, 0, 0, 1, 1, 0, 0, 1),
\\&(0, 0, 0, 0, 0, 0, 0, 0, 0, 1, 1, 1, 0, 1),
(1, 1, 0, 0, 0, 0, 0, 0, 1, 0, 0, 0, 0, 1),
(1, 1, 0, 0, 0, 0, 0, 0, 1, 0, 0, 0, 1, 0),
(0, 0, 0, 0, 1, 0, 0, 0, 1, 0, 0, 0, 0, 1),
\\&(1, 0, 0, 0, 1, 0, 0, 0, 1, 0, 0, 1, 0, 0),
(0, 1, 1, 0, 0, 0, 0, 0, 1, 0, 0, 0, 1, 0),
(0, 1, 1, 0, 0, 0, 0, 0, 1, 0, 0, 1, 0, 0),
(0, 1, 0, 0, 0, 0, 0, 0, 1, 0, 0, 1, 1, 0),
\\&(1, 0, 0, 0, 1, 0, 0, 0, 1, 0, 1, 0, 0, 0),
(0, 0, 0, 0, 0, 0, 0, 0, 1, 0, 1, 0, 0, 1),
(0, 1, 1, 1, 0, 0, 0, 0, 1, 0, 0, 1, 1, 1),
(0, 0, 1, 0, 0, 0, 0, 0, 1, 0, 1, 0, 1, 1),
\\&(0, 0, 0, 0, 0, 0, 0, 0, 1, 1, 0, 0, 1, 1),
(0, 0, 0, 0, 0, 0, 0, 0, 1, 1, 0, 1, 0, 1),
(0, 0, 1, 0, 1, 0, 0, 0, 1, 1, 1, 1, 1, 0),
(1, 0, 1, 0, 0, 0, 0, 1, 0, 0, 0, 0, 0, 0),
\\&(0, 0, 0, 0, 0, 0, 0, 1, 0, 0, 0, 0, 0, 1),
(0, 1, 0, 0, 0, 0, 0, 1, 0, 0, 0, 0, 1, 0),
(0, 0, 0, 1, 0, 0, 0, 1, 0, 0, 0, 1, 0, 0),
(1, 1, 0, 0, 0, 0, 0, 1, 0, 0, 0, 1, 0, 0),
\\&(0, 0, 0, 0, 0, 0, 0, 1, 0, 0, 0, 1, 1, 0),
(0, 0, 0, 0, 0, 0, 0, 1, 0, 0, 1, 0, 1, 0),
(0, 0, 1, 0, 0, 0, 0, 1, 0, 0, 1, 1, 0, 0),
(1, 1, 1, 0, 0, 0, 0, 1, 0, 0, 1, 1, 0, 1),
\\&(1, 0, 0, 0, 0, 0, 0, 1, 0, 0, 1, 1, 1, 0),
(0, 0, 1, 1, 0, 0, 0, 1, 0, 0, 1, 1, 1, 0),
(0, 0, 1, 0, 0, 0, 0, 0, 0, 0, 0, 0, 1, 1),
(0, 0, 0, 0, 0, 0, 0, 1, 0, 1, 0, 1, 0, 0),
\\&(1, 0, 0, 1, 0, 0, 0, 1, 0, 1, 0, 1, 0, 0),
(0, 1, 0, 0, 0, 0, 0, 1, 1, 0, 0, 0, 0, 1),
(0, 0, 0, 0, 0, 0, 0, 1, 1, 0, 0, 0, 1, 0),
(0, 0, 0, 0, 0, 0, 0, 1, 1, 0, 0, 1, 0, 1),
\\&(0, 0, 0, 0, 0, 0, 0, 1, 1, 0, 1, 0, 1, 1),
(0, 1, 0, 0, 0, 0, 0, 1, 1, 1, 0, 0, 0, 0),
(0, 0, 0, 0, 0, 0, 0, 1, 1, 1, 0, 0, 0, 1),
(0, 1, 0, 0, 1, 0, 0, 1, 1, 1, 0, 0, 1, 0),
\\&(0, 0, 0, 0, 0, 0, 0, 1, 1, 1, 1, 0, 0, 0),
(1, 0, 0, 0, 0, 0, 1, 0, 0, 0, 0, 0, 0, 0),
(0, 0, 0, 0, 0, 0, 1, 0, 0, 0, 0, 0, 0, 1),
(0, 1, 0, 0, 0, 0, 1, 0, 0, 0, 0, 0, 1, 0),
\\&(1, 1, 1, 0, 0, 0, 1, 0, 0, 0, 0, 0, 1, 0),
(0, 1, 1, 0, 0, 0, 1, 0, 0, 0, 0, 1, 0, 0),
(1, 0, 1, 1, 0, 0, 1, 0, 0, 0, 0, 0, 1, 0),
(0, 0, 1, 1, 0, 0, 1, 0, 0, 0, 0, 0, 1, 1),
\\&(0, 0, 1, 1, 1, 0, 1, 0, 0, 0, 0, 0, 0, 1),
(0, 0, 0, 0, 0, 0, 1, 0, 0, 0, 1, 0, 0, 0),
(0, 1, 0, 0, 1, 0, 1, 0, 0, 0, 1, 0, 0, 1),
(1, 1, 0, 0, 1, 0, 1, 0, 0, 0, 0, 1, 0, 0),
\\&(0, 1, 1, 1, 1, 0, 1, 0, 0, 0, 1, 1, 0, 0),
(0, 0, 1, 1, 0, 0, 1, 0, 0, 0, 1, 1, 1, 0),
(1, 1, 0, 1, 1, 0, 1, 0, 0, 1, 0, 0, 0, 1),
(0, 0, 1, 1, 1, 0, 0, 0, 0, 0, 1, 0, 0, 1),
\\&(0, 0, 0, 0, 0, 0, 1, 0, 1, 0, 0, 0, 0, 0),
(0, 0, 1, 0, 1, 0, 1, 0, 1, 0, 0, 0, 0, 0),
(0, 1, 0, 0, 1, 0, 1, 0, 1, 0, 0, 1, 0, 1),
(0, 1, 0, 1, 0, 0, 1, 0, 1, 1, 0, 0, 0, 0),
\\&(0, 1, 1, 0, 0, 0, 1, 1, 0, 0, 0, 0, 1, 0),
(1, 0, 0, 0, 0, 0, 1, 1, 0, 0, 0, 1, 0, 0),
(0, 0, 1, 1, 0, 0, 1, 1, 0, 0, 0, 1, 1, 0),
(0, 0, 1, 0, 0, 0, 1, 1, 0, 0, 1, 0, 1, 0),
\\&(0, 1, 1, 1, 1, 0, 1, 1, 0, 1, 0, 0, 0, 0),
(0, 1, 1, 1, 0, 0, 0, 0, 0, 0, 0, 0, 0, 0),
(0, 0, 1, 0, 0, 0, 1, 1, 1, 0, 1, 1, 0, 0),
(1, 1, 0, 0, 1, 0, 1, 1, 1, 1, 1, 0, 0, 0),
\\&(0, 0, 1, 1, 0, 0, 1, 1, 1, 1, 1, 0, 1, 0),
(0, 1, 1, 1, 1, 1, 0, 0, 0, 0, 0, 0, 0, 0),
(1, 1, 1, 0, 0, 1, 0, 0, 0, 0, 0, 0, 0, 1),
(0, 0, 1, 0, 0, 1, 0, 0, 0, 0, 0, 0, 0, 1),
\\&(0, 1, 1, 0, 0, 1, 0, 0, 0, 0, 0, 0, 1, 0),
(0, 0, 1, 0, 0, 1, 0, 0, 0, 0, 0, 1, 0, 0),
(0, 1, 1, 1, 1, 1, 0, 0, 0, 0, 0, 0, 1, 1),
(1, 0, 0, 0, 0, 1, 0, 0, 0, 0, 0, 1, 1, 0),
\\&(0, 0, 0, 1, 0, 1, 0, 0, 0, 0, 0, 1, 0, 0),
(1, 0, 0, 0, 0, 1, 0, 0, 0, 0, 1, 0, 0, 0),
(0, 1, 0, 1, 1, 1, 0, 0, 0, 0, 0, 1, 1, 0),
(1, 1, 1, 1, 0, 1, 0, 0, 0, 0, 0, 0, 1, 0),
\\&(0, 0, 1, 1, 0, 1, 0, 0, 0, 0, 1, 1, 0, 0),
(0, 0, 1, 1, 1, 1, 0, 0, 0, 1, 0, 0, 0, 1),
(0, 0, 1, 0, 0, 1, 0, 0, 0, 1, 0, 0, 1, 0),
(0, 0, 0, 0, 0, 1, 0, 0, 0, 1, 0, 0, 1, 1),
\\&(0, 0, 0, 0, 0, 1, 0, 0, 0, 1, 0, 1, 0, 1),
(0, 0, 1, 0, 0, 1, 0, 0, 0, 1, 1, 0, 0, 1),
(1, 1, 1, 1, 1, 1, 0, 0, 0, 1, 1, 1, 1, 1),
(0, 1, 1, 1, 0, 1, 0, 0, 1, 0, 0, 0, 0, 0),
\\&(1, 1, 0, 1, 0, 1, 0, 0, 1, 0, 0, 0, 0, 0),
(0, 0, 1, 0, 0, 1, 0, 0, 1, 0, 0, 0, 1, 0),
(1, 1, 1, 1, 0, 1, 0, 0, 1, 0, 0, 0, 1, 1),
(0, 0, 0, 0, 0, 1, 0, 0, 1, 0, 0, 1, 1, 0),
\\&(0, 0, 0, 0, 0, 1, 0, 0, 1, 1, 0, 0, 0, 1),
(1, 1, 0, 1, 0, 1, 0, 0, 1, 1, 0, 1, 0, 0),
(0, 0, 0, 0, 0, 1, 0, 0, 1, 1, 1, 0, 0, 0),
(0, 0, 1, 1, 0, 1, 0, 0, 1, 1, 1, 0, 0, 1),
\\&(0, 1, 0, 1, 1, 0, 1, 0, 0, 0, 0, 0, 0, 1),
(1, 1, 0, 0, 1, 1, 0, 1, 1, 0, 0, 0, 0, 0),
(1, 1, 0, 0, 0, 1, 0, 1, 1, 0, 0, 0, 0, 1),
(0, 1, 0, 0, 1, 1, 0, 1, 1, 0, 0, 1, 1, 1),
\\&(0, 1, 1, 0, 0, 1, 1, 0, 0, 0, 0, 0, 0, 0),
(1, 0, 1, 0, 0, 1, 1, 0, 0, 0, 0, 0, 0, 0),
(0, 1, 1, 1, 1, 1, 1, 0, 0, 0, 0, 0, 0, 1),
(1, 1, 1, 1, 0, 1, 1, 0, 0, 0, 0, 0, 0, 0),
\\&(0, 1, 0, 0, 1, 1, 1, 0, 0, 0, 0, 0, 0, 0),
(1, 1, 0, 1, 0, 1, 1, 0, 0, 0, 0, 0, 0, 1),
(1, 0, 0, 0, 1, 1, 1, 0, 0, 0, 0, 0, 0, 0),
(0, 0, 0, 1, 1, 1, 1, 0, 0, 0, 0, 0, 0, 0),
\\&(0, 1, 0, 0, 0, 1, 1, 0, 0, 0, 1, 0, 0, 0),
(1, 0, 0, 0, 0, 1, 1, 0, 0, 0, 0, 1, 0, 0),
(0, 0, 0, 0, 0, 1, 1, 0, 0, 0, 1, 0, 1, 1),
(1, 0, 1, 0, 1, 1, 1, 0, 0, 0, 1, 1, 0, 0),
\\&(0, 0, 0, 1, 0, 1, 1, 0, 0, 1, 0, 0, 0, 0),
(0, 1, 0, 1, 1, 1, 1, 0, 0, 1, 0, 0, 0, 0),
(0, 0, 1, 1, 1, 1, 1, 0, 0, 1, 0, 0, 1, 0),
(1, 1, 0, 1, 0, 0, 0, 0, 0, 0, 0, 0, 0, 0),
\\&(0, 0, 0, 0, 0, 1, 1, 0, 1, 0, 1, 0, 1, 0),
(1, 0, 0, 0, 1, 1, 1, 0, 1, 1, 0, 0, 0, 0),
(0, 0, 1, 1, 0, 1, 1, 0, 1, 1, 0, 0, 1, 1),
(1, 1, 0, 0, 1, 0, 0, 0, 0, 0, 0, 1, 1, 1),
\\&(1, 1, 1, 0, 0, 0, 0, 0, 0, 0, 0, 0, 0, 0),
(1, 0, 1, 0, 1, 0, 1, 0, 0, 0, 0, 0, 0, 0),
(1, 1, 1, 1, 1, 1, 1, 1, 0, 1, 0, 1, 1, 0),
(0, 0, 1, 0, 1, 1, 1, 0, 0, 0, 0, 0, 0, 0),
\\&(1, 1, 1, 0, 1, 1, 1, 1, 1, 0, 0, 0, 0, 0),
(0, 0, 1, 0, 1, 1, 1, 1, 1, 0, 0, 0, 0, 0),
(1, 1, 0, 1, 1, 1, 1, 1, 1, 0, 0, 1, 0, 1),
(1, 1, 0, 0, 1, 1, 1, 1, 1, 0, 1, 0, 0, 0)
\}.
\end{align*}}
Clearly, $|S_4^{14}|=148$. For a verification program showing that $S_4^{14}$ percolates under the $4$-neighbour bootstrap process in $Q_{14}$, see this \href{https://colab.research.google.com/drive/1_owaZNVjQTtzwKwdJ38Z51VGLKjKb-Fd?usp=sharing}{Google Colab Notebook} and the ancillary file \texttt{FourNeighbourBootstrapVerification.ipynb} uploaded with the arxiv version of this paper. 
\end{proof}

The next construction is for $d=15$. This matches the lower bound in Theorem~\ref{th:MN}.

\begin{lem}
\label{lem:Q15}
There exists a percolating set $S_4^{15}$ of cardinality $179$ for the $4$-neighbour bootstrap process in $Q_{15}$. 
\end{lem}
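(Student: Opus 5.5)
The plan is to treat this exactly like Lemmas~\ref{lem:Q6}--\ref{lem:Q14}: exhibit an explicit set $S_4^{15}\subseteq V(Q_{15})$ of $179$ vertices and certify by computer that it percolates under the $4$-neighbour bootstrap process. The target size is the Morrison--Noel bound of Theorem~\ref{th:MN} at $d=15$, since
\[\frac{15(15^2+3\cdot 15+14)}{24}=\frac{4260}{24}=177.5,\]
and rounding up and adding one gives $179$. Consequently no slack is available, and the set must be found rather than derived from a crude estimate.

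To find the set, I would start from a structured candidate built with Lemma~\ref{lem:meta}, taking $k=4$ and the blocks $S_1^{11},S_2^{11},S_3^{11}$ from Lemma~\ref{lem:r=3} together with $S_4^{11}$ from Lemma~\ref{lem:Q11}. These have sizes $1$, $7$, $27$ and $78$. Write $a,b,c,e$ for the number of vertices of $Q_4$ carrying labels $4,3,2,1$. The budget $78a+27b+7c+e=179$ with $a=1$ forces $27b+7c+e=101$; one admissible choice is $b=3$, $c=2$, $e=6$.

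\begin{itemize}
\item If some labeling $\ell_0:V(Q_4)\to\{0,\dots,4\}$ with these counts percolates under the $4$-meta bootstrap process, then Lemma~\ref{lem:meta} immediately yields a percolating set of size exactly $179$. Since $Q_4$ has only $16$ vertices, this can be decided by exhaustive search.
\item If no such labeling exists, I would instead use the best near-miss labeling as a seed for a local search (simulated annealing or an AlphaEvolve-style heuristic) directly in $Q_{15}$. The search keeps the size fixed at $179$ and swaps vertices so as to maximise the size of the final closure.
\end{itemize}

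Once a candidate is in hand, verifying it is routine. Run the $4$-neighbour process on all $2^{15}=32768$ vertices until it stabilises, and check that every vertex ends up infected. As with the other seed lemmas, this check would be recorded in the accompanying notebook. No human-readable argument is needed beyond listing the set and pointing to the verification.

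The main obstacle is the search itself, because the lower bound is tight here. For $d=9,\dots,12$, earlier PatternBoost searches found sets only within one or two of the optimum, so naive random search is unlikely to succeed. The first thing to try is the meta-labeling route on $Q_4$, or on $Q_6$ with the $d=9$ blocks, because it reduces the problem to a tiny exhaustive search. Only if that fails would I fall back on heuristic search in the full $Q_{15}$.
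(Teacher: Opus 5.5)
\textbf{There is a genuine gap: no set is ever produced.} Your proposal describes how you would look for a set, and for this lemma the set itself is the proof. The paper's proof is an explicit list of $179$ vertices of $Q_{15}$, found with AlphaEvolve, together with a computer check that the list percolates. A search procedure with no guaranteed outcome does not establish existence, and ``fall back on local search until it works'' is no exception. So as written there is no proof.

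Moreover, the route you plan to try first cannot succeed. A percolating $4$-meta labeling of $Q_4$ can be fed into Lemma~\ref{lem:meta} with blocks of \emph{any} dimension.

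\begin{enumerate}[(i)]
\item Suppose your labeling, with one label-$4$, three label-$3$, two label-$2$ and six label-$1$ vertices, percolated. Use it with $S_4^{12}$ from Lemma~\ref{lem:Q12} and $S_1^{12}\subseteq S_2^{12}\subseteq S_3^{12}$ from Lemma~\ref{lem:r=3}, of sizes $98,1,7,31$. This gives a percolating set in $Q_{16}$ of size $98+3\cdot 31+2\cdot 7+6=211$. But Theorem~\ref{th:MN} gives $m(Q_{16};4)\ge 213$, a contradiction.
\item The same argument rules out every labeling with one label-$4$ vertex that costs at most $180$ with the $d=11$ blocks. Such a labeling has $b\le 3$, and switching from the $d=11$ to the $d=12$ blocks adds exactly $20+4b\le 32$, giving at most $212$. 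In particular, the other admissible vectors $(b,c,e)=(2,6,5)$ and $(1,10,4)$ give $207$ and $203$.
\item With two label-$4$ vertices, your budget forces $b=0$. Then any vertex that reaches label $\ge 3$ has at least two neighbours of label $\ge 3$ at that moment. So the label-$4$ set stays inside the $2$-neighbour closure of the two initial vertices, which has at most $4$ vertices.
\end{enumerate}

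Hence, using $S_4^{11}$, the $Q_4$ route cannot beat the $78+3\cdot 27+3\cdot 7+1=181$ obtained from $\ell_0^4$ in Lemma~\ref{lem:Q4meta}. The loss is exactly the rounding in $|S_3^{11}|=27$ and $|S_2^{11}|=7$. This is why the recursion in Theorem~\ref{th:main} only reaches $d\equiv 0,4 \pmod 6$, and why $d=15$ needs a seed found directly. Your alternative on $Q_6$ with the $d=9$ blocks is also not a ``tiny'' exhaustive search, since $Q_6$ has $64$ vertices.
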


\begin{proof}
Define
{\tiny\begin{align*}
S_4^{15}:=
\{
&(0, 0, 0, 1, 0, 0, 0, 0, 0, 0, 0, 0, 0, 0, 0),
(0, 0, 1, 1, 1, 0, 0, 0, 0, 0, 0, 0, 0, 0, 0),
(0, 0, 1, 0, 0, 1, 0, 0, 0, 0, 0, 0, 0, 1, 0),
(0, 1, 0, 0, 1, 0, 0, 0, 0, 0, 0, 0, 0, 0, 0),
\\&(0, 1, 0, 1, 0, 1, 0, 0, 0, 0, 0, 0, 1, 0, 0),
(0, 0, 1, 0, 1, 0, 0, 0, 0, 0, 0, 0, 1, 0, 1),
(0, 0, 1, 0, 0, 0, 0, 0, 0, 0, 0, 0, 1, 0, 0),
(1, 1, 0, 0, 0, 1, 0, 0, 0, 0, 0, 0, 0, 1, 1),
\\&(0, 1, 1, 0, 0, 0, 0, 0, 0, 0, 0, 1, 0, 0, 0),
(0, 1, 0, 0, 0, 0, 0, 0, 0, 0, 0, 1, 0, 0, 1),
(0, 0, 1, 0, 1, 1, 0, 0, 0, 0, 0, 1, 0, 1, 0),
(0, 0, 0, 1, 1, 1, 0, 0, 0, 0, 0, 1, 0, 0, 0),
\\&(0, 0, 1, 0, 0, 1, 0, 0, 0, 0, 0, 1, 0, 0, 0),
(0, 1, 0, 0, 0, 1, 0, 0, 0, 0, 0, 1, 1, 0, 1),
(0, 0, 1, 0, 1, 1, 0, 0, 0, 0, 1, 0, 0, 0, 0),
(0, 0, 1, 0, 0, 1, 0, 0, 0, 0, 1, 0, 0, 0, 1),
\\&(0, 0, 0, 0, 1, 1, 0, 0, 0, 0, 1, 0, 0, 1, 0),
(1, 1, 0, 1, 1, 0, 0, 0, 0, 0, 1, 0, 0, 0, 0),
(0, 0, 1, 0, 1, 0, 0, 0, 0, 0, 1, 0, 1, 0, 0),
(0, 0, 1, 0, 0, 1, 0, 0, 0, 0, 1, 0, 1, 0, 0),
\\&(0, 0, 0, 0, 1, 1, 0, 0, 0, 0, 1, 1, 0, 0, 0),
(1, 1, 1, 1, 0, 1, 0, 0, 0, 1, 0, 0, 0, 0, 0),
(1, 1, 0, 1, 0, 0, 0, 0, 0, 1, 0, 0, 0, 0, 0),
(0, 0, 1, 0, 0, 0, 0, 0, 0, 1, 0, 0, 0, 0, 0),
\\&(0, 0, 1, 1, 0, 0, 0, 0, 0, 1, 0, 0, 0, 0, 1),
(0, 0, 1, 0, 1, 0, 0, 0, 0, 1, 0, 0, 0, 1, 0),
(0, 0, 0, 0, 0, 0, 0, 0, 0, 1, 0, 0, 1, 0, 1),
(1, 0, 0, 1, 0, 0, 0, 0, 0, 1, 0, 0, 0, 1, 1),
\\&(0, 1, 0, 0, 0, 0, 0, 0, 0, 1, 0, 1, 0, 0, 0),
(0, 0, 0, 0, 1, 0, 0, 0, 0, 0, 0, 0, 0, 1, 0),
(0, 0, 0, 0, 0, 0, 0, 0, 0, 1, 0, 1, 1, 0, 0),
(0, 0, 0, 0, 0, 0, 0, 0, 0, 1, 1, 0, 0, 0, 1),
\\&(0, 0, 1, 0, 1, 0, 0, 0, 0, 1, 1, 0, 1, 0, 1),
(0, 0, 1, 0, 1, 1, 0, 0, 1, 0, 0, 0, 0, 0, 0),
(0, 0, 1, 1, 1, 0, 0, 0, 1, 0, 0, 0, 0, 0, 1),
(0, 0, 1, 1, 1, 0, 0, 0, 1, 0, 0, 0, 0, 1, 0),
\\&(0, 0, 0, 1, 0, 0, 0, 0, 1, 0, 0, 0, 0, 0, 1),
(0, 0, 0, 0, 1, 1, 0, 0, 1, 0, 0, 0, 0, 1, 0),
(0, 0, 0, 0, 0, 1, 0, 0, 1, 0, 0, 0, 1, 0, 0),
(0, 1, 0, 0, 0, 0, 0, 0, 1, 0, 0, 0, 0, 0, 1),
\\&(0, 0, 0, 0, 0, 0, 0, 0, 1, 0, 0, 0, 1, 1, 0),
(0, 1, 0, 0, 1, 0, 0, 0, 1, 0, 0, 0, 0, 1, 0),
(0, 1, 0, 0, 1, 0, 0, 0, 1, 0, 0, 1, 0, 0, 0),
(0, 0, 0, 0, 1, 0, 0, 0, 1, 0, 0, 0, 0, 0, 1),
\\&(0, 0, 0, 0, 0, 0, 0, 0, 1, 0, 0, 1, 0, 1, 0),
(0, 0, 0, 1, 1, 0, 0, 0, 1, 0, 0, 1, 0, 1, 0),
(0, 0, 0, 0, 0, 0, 0, 0, 1, 0, 1, 1, 0, 0, 0),
(0, 1, 1, 0, 0, 0, 0, 0, 1, 1, 0, 0, 0, 0, 1),
\\&(0, 0, 0, 0, 0, 0, 0, 0, 1, 1, 0, 0, 0, 1, 0),
(0, 0, 0, 0, 0, 0, 0, 0, 1, 1, 0, 0, 1, 0, 0),
(0, 1, 0, 0, 0, 0, 0, 0, 1, 1, 0, 1, 0, 1, 0),
(0, 0, 1, 1, 1, 0, 0, 0, 1, 1, 0, 1, 0, 1, 0),
\\&(0, 0, 0, 0, 0, 0, 0, 0, 1, 1, 1, 0, 0, 0, 0),
(0, 0, 0, 1, 0, 0, 0, 0, 1, 1, 1, 0, 1, 1, 0),
(0, 0, 0, 1, 1, 1, 0, 0, 0, 0, 0, 0, 0, 1, 0),
(1, 0, 1, 1, 1, 0, 0, 1, 0, 0, 0, 0, 0, 0, 0),
\\&(0, 0, 0, 0, 0, 0, 0, 1, 0, 0, 0, 0, 0, 0, 1),
(1, 1, 1, 1, 0, 1, 0, 1, 0, 0, 0, 0, 0, 1, 0),
(0, 0, 0, 1, 1, 1, 0, 1, 0, 0, 0, 0, 0, 0, 0),
(1, 0, 1, 1, 0, 0, 0, 1, 0, 0, 0, 0, 1, 0, 0),
\\&(1, 0, 1, 0, 0, 0, 0, 1, 0, 0, 0, 0, 1, 0, 1),
(0, 1, 0, 1, 0, 0, 0, 1, 0, 0, 0, 0, 1, 1, 0),
(1, 0, 0, 0, 0, 1, 0, 1, 0, 0, 0, 0, 1, 1, 1),
(0, 0, 0, 0, 1, 0, 0, 1, 0, 0, 0, 0, 1, 0, 1),
\\&(0, 1, 0, 0, 1, 0, 0, 1, 0, 0, 0, 1, 0, 0, 0),
(0, 1, 0, 0, 0, 0, 0, 1, 0, 0, 0, 1, 0, 1, 0),
(0, 1, 0, 0, 1, 0, 0, 1, 0, 0, 0, 1, 0, 1, 1),
(0, 0, 0, 1, 1, 1, 0, 1, 0, 0, 0, 1, 1, 0, 0),
\\&(1, 0, 1, 1, 1, 0, 0, 1, 0, 0, 0, 0, 1, 0, 1),
(0, 0, 1, 0, 1, 0, 0, 1, 0, 0, 1, 0, 0, 0, 0),
(0, 0, 0, 0, 1, 0, 0, 1, 0, 0, 1, 0, 0, 0, 1),
(0, 0, 0, 0, 1, 0, 0, 1, 0, 0, 1, 0, 0, 1, 0),
\\&(0, 1, 0, 0, 0, 0, 0, 1, 0, 0, 1, 0, 0, 0, 1),
(0, 0, 0, 0, 0, 0, 0, 1, 0, 0, 1, 0, 1, 1, 0),
(0, 0, 0, 0, 0, 0, 0, 1, 0, 0, 1, 1, 0, 0, 0),
(0, 0, 0, 1, 0, 0, 0, 1, 0, 1, 0, 0, 0, 0, 0),
\\&(1, 0, 1, 1, 1, 0, 0, 1, 0, 1, 0, 0, 0, 0, 1),
(1, 1, 0, 0, 0, 1, 0, 1, 0, 1, 0, 0, 0, 1, 1),
(1, 1, 0, 0, 0, 0, 0, 1, 0, 1, 0, 0, 1, 1, 0),
(1, 1, 0, 1, 0, 1, 0, 1, 0, 1, 0, 0, 1, 1, 0),
\\&(0, 0, 0, 0, 0, 0, 0, 1, 0, 1, 0, 1, 0, 0, 0),
(1, 0, 0, 0, 0, 1, 0, 1, 0, 1, 0, 0, 1, 1, 0),
(1, 0, 0, 0, 0, 1, 0, 1, 0, 1, 0, 1, 0, 0, 0),
(1, 1, 0, 1, 1, 0, 0, 1, 0, 1, 1, 0, 0, 0, 1),
\\&(0, 0, 0, 0, 0, 0, 0, 1, 0, 1, 1, 1, 0, 1, 0),
(0, 0, 0, 1, 0, 0, 0, 1, 1, 0, 0, 0, 0, 0, 0),
(0, 0, 0, 1, 1, 1, 0, 1, 1, 0, 0, 0, 0, 0, 1),
(0, 1, 0, 0, 0, 0, 0, 1, 1, 0, 0, 0, 0, 1, 0),
\\&(1, 0, 1, 1, 1, 1, 0, 1, 1, 0, 0, 0, 0, 1, 1),
(1, 0, 0, 1, 1, 0, 0, 1, 1, 0, 0, 1, 1, 0, 1),
(1, 0, 1, 1, 0, 1, 0, 1, 1, 0, 1, 0, 0, 1, 0),
(0, 0, 0, 0, 1, 0, 0, 1, 1, 0, 1, 0, 1, 1, 0),
\\&(1, 0, 1, 1, 0, 1, 0, 1, 1, 0, 1, 1, 0, 0, 0),
(0, 1, 0, 0, 1, 0, 0, 1, 1, 0, 1, 1, 0, 0, 1),
(0, 0, 0, 0, 0, 1, 0, 1, 1, 1, 0, 0, 0, 1, 0),
(1, 0, 1, 0, 1, 1, 0, 1, 1, 1, 0, 0, 0, 1, 1),
\\&(0, 1, 0, 0, 1, 0, 0, 1, 1, 1, 0, 1, 0, 1, 1),
(0, 0, 0, 0, 0, 0, 0, 1, 1, 1, 0, 1, 1, 0, 0),
(0, 0, 0, 0, 0, 0, 0, 1, 1, 1, 1, 0, 0, 1, 0),
(0, 1, 1, 0, 1, 0, 1, 0, 0, 0, 0, 0, 0, 0, 0),
\\&(0, 1, 0, 1, 0, 0, 1, 0, 0, 0, 0, 0, 0, 0, 0),
(0, 1, 0, 0, 0, 0, 1, 0, 0, 0, 0, 0, 0, 0, 1),
(0, 1, 0, 1, 0, 1, 1, 0, 0, 0, 0, 0, 0, 1, 1),
(0, 1, 0, 0, 1, 0, 1, 0, 0, 0, 0, 0, 0, 1, 0),
\\&(1, 1, 0, 1, 0, 1, 1, 0, 0, 0, 0, 0, 0, 0, 0),
(0, 0, 0, 0, 0, 0, 1, 0, 0, 0, 0, 0, 1, 0, 1),
(0, 0, 0, 0, 1, 0, 1, 0, 0, 0, 0, 0, 1, 0, 0),
(0, 0, 1, 0, 1, 1, 1, 0, 0, 0, 0, 0, 1, 0, 0),
\\&(1, 0, 0, 0, 1, 0, 1, 0, 0, 0, 0, 1, 0, 0, 1),
(0, 0, 0, 0, 1, 0, 1, 0, 0, 0, 0, 1, 0, 0, 0),
(0, 1, 1, 0, 1, 1, 1, 0, 0, 0, 0, 1, 0, 1, 1),
(0, 0, 0, 1, 1, 1, 1, 0, 0, 0, 0, 1, 1, 0, 0),
\\&(0, 1, 0, 0, 1, 0, 1, 0, 0, 0, 0, 1, 0, 1, 1),
(0, 0, 0, 0, 1, 0, 1, 0, 0, 0, 1, 0, 0, 0, 0),
(1, 0, 0, 1, 1, 1, 1, 0, 0, 0, 1, 0, 1, 0, 0),
(1, 0, 1, 0, 1, 1, 1, 0, 0, 0, 1, 0, 1, 0, 0),
\\&(0, 0, 1, 0, 1, 1, 1, 0, 0, 0, 1, 0, 1, 1, 0),
(0, 1, 0, 0, 0, 1, 0, 0, 0, 0, 0, 0, 0, 0, 1),
(0, 0, 0, 0, 1, 0, 1, 0, 0, 0, 1, 0, 1, 0, 1),
(0, 0, 0, 1, 1, 0, 1, 0, 0, 0, 1, 1, 0, 0, 1),
\\&(0, 0, 0, 0, 1, 0, 1, 0, 0, 0, 1, 1, 1, 0, 0),
(0, 0, 1, 1, 1, 1, 1, 0, 0, 0, 1, 1, 1, 0, 0),
(1, 1, 0, 0, 0, 0, 1, 0, 0, 1, 0, 0, 0, 0, 1),
(0, 0, 0, 0, 0, 0, 1, 0, 0, 1, 0, 0, 0, 1, 0),
\\&(1, 1, 0, 0, 1, 0, 0, 0, 0, 0, 0, 0, 0, 0, 1),
(0, 0, 0, 0, 0, 0, 1, 0, 0, 1, 1, 0, 0, 0, 0),
(0, 0, 0, 0, 1, 1, 1, 0, 0, 0, 0, 0, 0, 0, 1),
(0, 0, 0, 1, 1, 1, 1, 0, 1, 0, 0, 0, 0, 1, 0),
\\&(1, 0, 1, 1, 1, 1, 1, 0, 1, 0, 0, 1, 0, 0, 0),
(0, 0, 1, 1, 1, 1, 1, 0, 1, 0, 0, 1, 0, 1, 0),
(0, 0, 0, 1, 0, 0, 1, 0, 0, 0, 0, 0, 0, 0, 1),
(1, 1, 0, 1, 1, 1, 1, 0, 1, 0, 1, 0, 0, 1, 0),
\\&(0, 0, 0, 0, 1, 1, 1, 0, 1, 0, 1, 1, 0, 1, 0),
(0, 0, 0, 1, 1, 0, 1, 0, 0, 0, 0, 0, 0, 0, 0),
(1, 0, 1, 1, 0, 0, 1, 1, 0, 0, 0, 0, 0, 0, 0),
(0, 0, 1, 0, 1, 0, 1, 1, 0, 0, 0, 0, 0, 0, 0),
\\&(1, 0, 0, 1, 1, 1, 1, 1, 0, 0, 0, 0, 0, 0, 0),
(0, 0, 0, 1, 1, 1, 1, 1, 0, 0, 0, 0, 0, 0, 1),
(1, 0, 1, 1, 1, 1, 1, 1, 0, 0, 0, 0, 0, 0, 1),
(1, 1, 0, 0, 0, 0, 1, 1, 0, 0, 0, 0, 0, 0, 1),
\\&(1, 1, 0, 1, 1, 0, 1, 1, 0, 0, 0, 0, 0, 0, 0),
(0, 1, 1, 0, 0, 0, 1, 1, 0, 0, 0, 0, 0, 0, 0),
(0, 0, 1, 1, 1, 1, 1, 1, 0, 0, 0, 0, 0, 1, 1),
(0, 0, 0, 1, 1, 0, 1, 1, 0, 0, 0, 0, 1, 0, 1),
\\&(0, 1, 1, 0, 0, 0, 1, 1, 0, 0, 0, 0, 0, 1, 1),
(1, 1, 1, 1, 0, 0, 1, 1, 0, 0, 0, 0, 1, 0, 0),
(0, 1, 0, 0, 1, 1, 1, 1, 0, 0, 0, 1, 0, 0, 0),
(1, 0, 0, 0, 1, 1, 1, 1, 0, 0, 0, 1, 0, 0, 1),
\\&(0, 0, 0, 0, 1, 1, 1, 1, 0, 0, 0, 1, 1, 0, 0),
(0, 0, 0, 0, 0, 0, 1, 1, 0, 0, 0, 1, 1, 0, 0),
(0, 0, 0, 0, 0, 0, 1, 1, 0, 0, 0, 0, 0, 0, 0),
(1, 1, 0, 1, 0, 0, 1, 1, 0, 0, 1, 0, 0, 0, 0),
\\&(1, 1, 0, 1, 1, 1, 1, 1, 0, 0, 1, 0, 0, 0, 0),
(0, 0, 1, 0, 0, 0, 1, 0, 0, 0, 0, 0, 0, 1, 0),
(0, 0, 0, 1, 1, 1, 1, 1, 0, 0, 1, 0, 1, 0, 0),
(1, 0, 1, 1, 0, 1, 1, 1, 0, 0, 1, 0, 1, 0, 0),
\\&(0, 0, 0, 1, 0, 0, 1, 1, 0, 0, 1, 0, 1, 1, 0),
(0, 0, 1, 0, 0, 0, 1, 1, 0, 0, 1, 0, 0, 1, 1),
(0, 0, 1, 0, 0, 1, 1, 0, 0, 0, 0, 0, 0, 0, 0),
(0, 0, 0, 0, 0, 0, 1, 1, 0, 0, 1, 1, 0, 1, 0),
\\&(1, 1, 1, 1, 1, 0, 1, 1, 0, 1, 0, 0, 0, 0, 0),
(1, 1, 1, 1, 1, 1, 1, 1, 0, 1, 0, 0, 0, 0, 1),
(0, 1, 0, 0, 0, 0, 1, 1, 0, 1, 0, 0, 0, 0, 0),
(0, 0, 0, 1, 1, 0, 0, 0, 1, 0, 0, 0, 0, 0, 0),
\\&(1, 1, 0, 1, 0, 0, 1, 1, 0, 1, 0, 0, 0, 0, 0),
(0, 0, 0, 0, 0, 0, 1, 1, 0, 1, 0, 0, 1, 0, 1),
(1, 0, 1, 0, 1, 1, 1, 1, 0, 1, 0, 1, 0, 0, 0),
(0, 0, 1, 0, 1, 0, 1, 0, 0, 0, 0, 0, 0, 0, 1),
\\&(0, 1, 0, 1, 0, 0, 1, 1, 0, 1, 1, 0, 0, 0, 0),
(0, 1, 0, 0, 0, 0, 1, 1, 0, 1, 1, 0, 0, 0, 1),
(0, 0, 0, 0, 1, 0, 1, 1, 0, 0, 0, 0, 0, 0, 1),
(1, 0, 0, 0, 0, 0, 1, 1, 0, 0, 0, 1, 0, 0, 0),
\\&(1, 1, 1, 1, 0, 0, 0, 0, 0, 0, 0, 0, 0, 0, 0),
(1, 1, 1, 1, 1, 1, 1, 1, 0, 0, 0, 1, 1, 1, 0),
(0, 0, 0, 0, 1, 1, 1, 1, 0, 0, 0, 0, 0, 1, 1),
(1, 0, 1, 0, 1, 1, 1, 1, 1, 0, 1, 0, 1, 1, 0),
\\&(1, 0, 1, 1, 0, 1, 1, 1, 1, 0, 1, 1, 0, 1, 1),
(0, 1, 1, 1, 1, 0, 1, 1, 1, 0, 1, 1, 1, 0, 1),
(0, 1, 0, 0, 0, 1, 1, 1, 1, 1, 1, 1, 0, 1, 1)
\}.
\end{align*}}
Clearly, $|S_4^{15}|=179$. For a verification program showing that $S_4^{15}$ percolates under the $4$-neighbour bootstrap process in $Q_{15}$, see this \href{https://colab.research.google.com/drive/1_owaZNVjQTtzwKwdJ38Z51VGLKjKb-Fd?usp=sharing}{Google Colab Notebook} and the ancillary file \texttt{FourNeighbourBootstrapVerification.ipynb} uploaded with the arxiv version of this paper. 
\end{proof}

\section{4-Neighbour Meta Constructions}
\label{sec:constructionsMeta4}

The goal of this section is to present examples of percolating labelings for the $4$-meta bootstrap process on $Q_d$ for $d=4$ and $d=12$ that will play the role of $\ell_0$ in our applications of Lemma~\ref{lem:meta}.

\begin{lem}
\label{lem:Q4meta}
There exists a percolating labeling $\ell_0^4:V(Q_4)\to\{0,1,2,3,4\}$ for the $4$-meta bootstrap process in $Q_4$ such that there are
\begin{itemize}
    \item 1 vertex of label 1,
    \item 3 vertices of label 2,
    \item 3 vertices of label 3,
    \item 1 vertex of label 4,
\end{itemize}
\end{lem}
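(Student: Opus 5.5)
The proof will be an explicit construction: write down a labeling $\ell_0^4$ of $V(Q_4)$ with the prescribed label counts, then exhibit a sequence of applications of Rule~1 (Completion) and Rule~2 (Promotion) ending with every vertex labelled $4$. Since the final labeling does not depend on the order of rule applications, one valid sequence suffices. I do not yet have an explicit labeling that works.

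The counts are forced by how the lemma will be used. Feeding $\ell_0^4$ into Lemma~\ref{lem:meta} with $k=4$ gives
\[
|S_4^{d-4}|+3|S_3^{d-4}|+3|S_2^{d-4}|+|S_1^{d-4}|,
\]
which is exactly the increment of the Morrison--Noel bound of Theorem~\ref{th:MN} from $d-4$ to $d$. So no slack is available. Every label-$3$ vertex must complete using a single label-$4$ neighbour, every label-$2$ vertex using two, and the lone label-$1$ vertex using three. Anything the labels cannot cover must be reached through Promotion, where a label-$0$ vertex with four higher-labelled neighbours acquires the fourth-highest label among them.

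The natural first attempts fail, and each failure is instructive.
\begin{itemize}
\item Put $4$ at $0000$, the $3$'s at $e_1,e_2,e_3$, the $2$'s at $e_1+e_2$, $e_1+e_3$, $e_2+e_3$, and the $1$ at $e_1+e_2+e_3$. This completes the whole subcube $\{x_4=0\}$. After that, every vertex with $x_4=1$ has only one label-$4$ neighbour, so the process stalls.
\item Put $4$ at $0000$ and the $3$'s at $e_i+e_4$ for $i\le 3$. Promotion at $e_4$ then raises it to $3$, after which it completes, and this cascades. However, the $2$'s at $e_1,e_2,e_3$ leave the set of $4$'s stuck at layers $\le 1$ together with the $e_i+e_4$, and every remaining vertex sees at most two label-$4$ neighbours.
\end{itemize}
The lesson is that the labels must be spread so that Promotion fires repeatedly on label-$0$ vertices to create new label-$2$ and label-$3$ vertices, rather than the labels being clustered in one subcube.

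I would find the labeling by exhaustive computer search. There are only $\binom{16}{1,3,3,1,8}$ candidate labelings, and fewer than that up to the automorphism group of $Q_4$ (fix the label-$4$ vertex at $0000$). For each candidate, run the meta process to termination and keep any that percolate. Once one is found, the written proof lists it and records a short explicit chain of rule applications. For example:
\begin{enumerate}
\item the label-$3$ vertices adjacent to the $4$ complete;
\item the label-$2$ vertices now having two label-$4$ neighbours complete;
\item Promotion on certain label-$0$ vertices with four labelled neighbours;
\item and so on, until all $16$ vertices carry label $4$.
\end{enumerate}
As with the other constructions in the paper, this chain would also be confirmed with the verification notebook.

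The main obstacle is finding a labeling that works at all. Once it is in hand, checking it is routine bookkeeping on $16$ vertices.
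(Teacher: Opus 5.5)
Your proposal has a genuine gap. The lemma is a pure existence claim, and its whole content is a specific labeling, which you say you do not yet have. An exhaustive search over the $15!/(3!\,3!\,8!)=900900$ candidates (with the label-$4$ vertex fixed) is a feasible way to \emph{find} a proof. Describing it, however, establishes nothing: until a labeling is written down and its percolation checked, the search might as well fail. Your numbered chain of rule applications is only a template. Your two failed attempts and the observation that Promotion must fire on label-$0$ vertices are correct, but they are heuristics, not steps of a proof.

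The missing ingredient is the construction itself. The paper takes
\begin{itemize}
\item $\ell_0^4(0,1,1,0)=4$,
\item $\ell_0^4(1,0,1,0)=\ell_0^4(1,1,0,0)=\ell_0^4(1,1,1,1)=3$,
\item $\ell_0^4(0,0,0,0)=\ell_0^4(0,1,0,1)=\ell_0^4(1,0,0,1)=2$,
\item $\ell_0^4(0,0,1,1)=1$,
\end{itemize}
with all other labels $0$, and verifies percolation by computer. It can also be checked by hand, and the check realizes exactly the idea you were reaching for. The eight nonzero labels sit on precisely the even-weight vertices. So every odd-weight vertex has four labelled neighbours, and Promotion gives it the minimum of their labels: $(1,1,1,0)\mapsto 3$; $(0,1,0,0),(1,0,0,0),(1,1,0,1)\mapsto 2$; and $(0,0,0,1),(0,0,1,0),(0,1,1,1),(1,0,1,1)\mapsto 1$. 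Completion then cascades in this order:
\begin{enumerate}
\item $(1,1,1,0)$, adjacent to $(0,1,1,0)$;
\item $(1,0,1,0),(1,1,0,0),(1,1,1,1)$, each adjacent to $(1,1,1,0)$;
\item $(0,1,0,0),(1,0,0,0),(1,1,0,1)$, and then $(0,0,0,0),(0,1,0,1),(1,0,0,1)$, each with two label-$4$ neighbours at that point;
\item $(0,0,0,1),(0,0,1,0),(0,1,1,1),(1,0,1,1)$, each with three;
\item finally $(0,0,1,1)$.
\end{enumerate}
Writing down this labeling (or any other verified one) together with such a chain is what turns your plan into a proof.
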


\begin{proof}
Define $\ell_0^4:V(Q_4)\to\{0,1,2,3,4\}$ by letting
\[\ell_0^4(0,0,1,1)=1,\]
\[\ell_0^4(0,0,0,0)=\ell_0^4(0,1,0,1)=\ell_0^4(1,0,0,1)=2,\]
\[\ell_0^4(1,0,1,0)=\ell_0^4(1,1,0,0)=\ell_0^4(1,1,1,1)=3,\]
\[\ell_0^4(0,1,1,0)=4,\]
and mapping all other vertices to $0$. For a verification program showing that $\ell_0^4$ percolates under the $4$-meta bootstrap process in $Q_{4}$, see this \href{https://colab.research.google.com/drive/1_owaZNVjQTtzwKwdJ38Z51VGLKjKb-Fd?usp=sharing}{Google Colab Notebook} and the ancillary file \texttt{FourNeighbourBootstrapVerification.ipynb} uploaded with the arxiv version of this paper. 
\end{proof}

\begin{lem}
\label{lem:Q12meta}
There exists a percolating labeling $\ell_0^{12}:V(Q_{12})\to\{0,1,2,3,4\}$ for the $4$-meta bootstrap process in $Q_{12}$ such that there are
\begin{itemize}
    \item 55 vertices of label 1,
    \item 33 vertices of label 2,
    \item 9 vertices of label 3,
    \item 1 vertex of label 4,
\end{itemize}
and all other vertices have label $0$. 
\end{lem}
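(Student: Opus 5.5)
The plan is to prove Lemma~\ref{lem:Q12meta} the same way as Lemma~\ref{lem:Q4meta}: write down an explicit labeling $\ell_0^{12}$ of $V(Q_{12})$ with exactly $55$ vertices of label $1$, $33$ of label $2$, $9$ of label $3$ and one of label $4$, and then check that it percolates under the $4$-meta bootstrap process. Since the final labeling does not depend on the order in which the rules are applied, the check is a deterministic closure computation. Starting from $\ell_0^{12}$, repeatedly scan all $4096$ vertices. Apply Rule~1 to any vertex of label $i$ with at least $4-i$ neighbours of label $4$. Apply Rule~2 to any vertex with at least $4$ neighbours of higher label, raising its label to the fourth largest neighbouring label. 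Stop when nothing changes, and confirm that every vertex has label $4$. As with the other building blocks, this check would be done by the verification program in the accompanying notebook.

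The substantive work is finding the labeling. The counts are dictated by the intended use in Lemma~\ref{lem:meta} with $k=12$: the cost $55|S_1|+33|S_2|+9|S_3|+|S_4|$, with $S_1,S_2,S_3$ from Lemma~\ref{lem:r=3} and $S_4$ a seed from Section~\ref{sec:constructionsSeed4}, must match the Morrison--Noel bound in dimension $d$ from the value in dimension $d-12$. I would first look for structure before searching blindly. One option is a hierarchical design: blow up $\ell_0^4$ from Lemma~\ref{lem:Q4meta} by placing, in each subcube $Q_8^x$, a labeling that plays the role of an ``$i$-level'' object. One could also take the labelings $\ell_0^3,\ell_0^6$ from the proof of Lemma~\ref{lem:r=3} as models for which low labels are needed near the label-$4$ vertex. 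Such a layered guess would serve as a seed configuration.

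From the seed, I would run a local search (simulated annealing, or an evolutionary search as in the rest of the paper) over labelings with the prescribed label multiset. The moves are swapping positions of labels or moving a labelled vertex to a neighbour. The objective is the number of vertices that reach label $4$, with ties broken by the sum of final labels. The main obstacle is this search step. The label budget is extremely tight: it corresponds to equality in Theorem~\ref{th:MN} asymptotically, so essentially no Completion step may be wasted. I therefore expect naive random search to stall with a small set of vertices stuck at labels $2$ or $3$. To overcome this, I would exploit symmetry: impose invariance under a subgroup of coordinate permutations, for example a product of three copies of $Q_4$ permuted cyclically. This shrinks the search space and makes it more likely that the infection propagates uniformly. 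I would then relax the symmetry only at the end to fix small defects.

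Once a candidate is found, the proof is just the explicit table of the $98$ nonzero labels, a direct count confirming the multiplicities $55,33,9,1$, and the computer verification of percolation described above.
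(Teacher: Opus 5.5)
The form of your argument matches the paper's. The paper's proof of this lemma is an explicit table of the $98$ nonzero labels: label $4$ at $0^{12}$, nine vertices of label $3$, $33$ of label $2$ and $55$ of label $1$, found with AlphaEvolve. It is followed by a computer check that the $4$-meta closure gives every vertex label $4$. Your description of that check is correct: both rules are monotone, so the closure is order-independent. You are also right that the multiplicities $55,33,9,1$ are forced by requiring the recursion $d-12\to d$ to hit $\frac{d(d^2+3d+14)}{24}+1$ exactly.

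The gap is that you never produce $\ell_0^{12}$. For an existence statement of this kind, the explicit labeling is the entire proof. Your second and third paragraphs describe a procedure that may or may not succeed: a hierarchical seed built from $\ell_0^4$, an imposed cyclic symmetry on three blocks of four coordinates, and annealing or evolutionary search. You yourself expect naive search to stall. Nothing in them shows that a labeling with these multiplicities percolating under the $4$-meta process exists.

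Nor can such a labeling be read off from pieces already in the paper. Iterating $\ell_0^4$ three times via Lemma~\ref{lem:meta}, along $6\to10\to14\to18$, gives sets of sizes $61$, $149$ and $297$. The last is two more than the target $295$, because $|S_3^{x}|$ is rounded up when $x\equiv 2,4 \pmod 6$. So the $12$-dimensional labeling is genuinely new information. To turn your plan into a proof you must exhibit a concrete labeling and run the closure computation on it, which is exactly what the paper does.
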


\begin{proof}
Define $\ell_0^{12}:V(Q_{12})\to\{0,1,2,3,4\}$ by letting
{\footnotesize
\begin{align*}
\ell_0^{12}(0,0,0,0,1,0,0,0,0,0,0,1)=\ell_0^{12}(0,0,0,0,1,1,0,0,1,0,0,1)=\ell_0^{12}(0,0,0,1,0,0,1,0,1,0,0,1)\\
=\ell_0^{12}(0,0,0,1,1,0,0,0,0,1,1,0)=\ell_0^{12}(0,0,1,0,1,0,1,0,0,0,0,1)=\ell_0^{12}(0,0,1,1,0,0,0,1,0,0,0,1)\\
=\ell_0^{12}(0,0,1,1,1,0,1,0,0,0,1,1)=\ell_0^{12}(0,1,0,0,0,0,1,0,0,0,1,1)=\ell_0^{12}(0,1,0,0,0,0,1,0,0,1,0,0)\\
=\ell_0^{12}(0,1,0,0,0,1,1,0,0,0,0,0)=\ell_0^{12}(0,1,0,0,1,0,1,1,1,1,0,0)=\ell_0^{12}(0,1,0,0,1,1,0,1,0,0,0,1)\\
=\ell_0^{12}(0,1,0,0,1,1,0,1,1,0,1,1)=\ell_0^{12}(0,1,0,1,0,0,0,0,0,1,1,0)=\ell_0^{12}(0,1,0,1,0,0,1,0,0,0,1,0)\\
=\ell_0^{12}(0,1,1,0,0,1,0,0,1,0,1,0)=\ell_0^{12}(0,1,1,0,0,1,1,0,0,1,1,0)=\ell_0^{12}(0,1,1,1,0,0,0,0,0,0,0,1)\\
=\ell_0^{12}(0,1,1,1,0,1,0,0,1,0,0,1)=\ell_0^{12}(0,1,1,1,1,0,0,0,0,0,1,1)=\ell_0^{12}(1,0,0,0,0,0,1,0,1,0,1,1)\\
=\ell_0^{12}(1,0,0,0,0,1,0,0,1,0,0,1)=\ell_0^{12}(1,0,0,0,1,0,1,0,1,0,0,0)=\ell_0^{12}(1,0,0,0,1,0,1,1,1,0,0,1)\\
=\ell_0^{12}(1,0,0,1,0,0,1,0,0,0,1,1)=\ell_0^{12}(1,0,0,1,0,0,1,0,1,1,0,1)=\ell_0^{12}(1,0,0,1,0,1,1,0,0,1,0,1)\\
=\ell_0^{12}(1,0,0,1,1,0,0,0,0,0,0,0)=\ell_0^{12}(1,0,1,0,0,0,0,0,1,0,0,1)=\ell_0^{12}(1,0,1,0,0,1,0,0,0,0,1,0)\\
=\ell_0^{12}(1,0,1,0,0,1,1,0,0,0,1,1)=\ell_0^{12}(1,0,1,0,0,1,1,0,1,0,0,1)=\ell_0^{12}(1,0,1,0,1,0,0,1,0,0,0,0)\\
=\ell_0^{12}(1,0,1,1,0,0,0,0,0,0,1,0)=\ell_0^{12}(1,0,1,1,0,1,1,1,0,0,1,1)=\ell_0^{12}(1,0,1,1,1,0,0,0,0,1,0,0)\\
=\ell_0^{12}(1,1,0,0,1,1,0,0,0,1,0,1)=\ell_0^{12}(1,1,0,1,0,0,0,0,1,1,0,0)=\ell_0^{12}(1,1,0,1,0,0,0,1,0,1,0,0)\\
=\ell_0^{12}(1,1,0,1,0,0,1,0,1,1,1,1)=\ell_0^{12}(1,1,0,1,1,0,0,0,1,1,0,1)=\ell_0^{12}(1,1,1,0,0,0,0,0,1,0,1,1)\\
=\ell_0^{12}(1,1,1,0,0,1,0,1,0,0,1,0)=\ell_0^{12}(1,1,1,0,0,1,0,1,0,1,1,1)=\ell_0^{12}(1,1,1,0,0,1,1,1,1,1,1,1)\\
=\ell_0^{12}(1,1,1,0,1,0,0,0,1,0,1,0)=\ell_0^{12}(1,1,1,0,1,0,0,1,1,0,0,0)=\ell_0^{12}(1,1,1,0,1,0,0,1,1,1,0,1)\\
=\ell_0^{12}(1,1,1,0,1,1,0,0,1,0,0,0)=\ell_0^{12}(1,1,1,1,0,0,0,0,1,1,1,1)=\ell_0^{12}(1,1,1,1,0,0,0,1,1,1,0,1)\\
=\ell_0^{12}(1,1,1,1,0,1,1,1,0,0,0,1)=\ell_0^{12}(1,1,1,1,1,0,0,0,1,0,0,1)=\ell_0^{12}(1,1,1,1,1,0,1,0,0,1,1,1)\\
=\ell_0^{12}(1,1,1,1,1,1,0,1,0,1,0,1)=1,
\end{align*}
\begin{align*}
\ell_0^{12}(0,1,0,0,0,0,0,0,0,1,1,1)=\ell_0^{12}(0,1,0,0,0,0,0,0,1,0,1,1)=\ell_0^{12}(0,1,0,0,0,1,0,0,1,0,0,1)\\
=\ell_0^{12}(0,1,0,0,1,0,1,0,0,0,0,1)=\ell_0^{12}(0,1,1,0,0,1,0,0,0,0,0,1)=\ell_0^{12}(1,0,0,0,0,0,0,0,0,0,1,1)\\
=\ell_0^{12}(1,0,0,0,0,0,0,0,0,1,0,0)=\ell_0^{12}(1,0,0,0,0,0,0,0,1,0,0,0)=\ell_0^{12}(1,0,0,0,0,0,0,1,0,1,0,1)\\
=\ell_0^{12}(1,0,0,0,0,1,1,0,0,0,0,1)=\ell_0^{12}(1,0,0,0,1,0,0,0,0,1,0,1)=\ell_0^{12}(1,0,0,1,0,1,0,0,0,0,0,1)\\
=\ell_0^{12}(1,0,1,0,0,0,0,0,0,1,0,1)=\ell_0^{12}(1,1,0,0,0,0,0,0,0,0,0,1)=\ell_0^{12}(1,1,0,0,0,0,0,1,0,0,1,1)\\
=\ell_0^{12}(1,1,0,0,0,0,0,1,1,1,0,1)=\ell_0^{12}(1,1,0,0,0,1,0,0,0,0,1,0)=\ell_0^{12}(1,1,0,0,1,0,0,0,0,0,1,0)\\
=\ell_0^{12}(1,1,0,0,1,0,0,0,1,0,0,0)=\ell_0^{12}(1,1,0,0,1,0,0,1,0,1,0,1)=\ell_0^{12}(1,1,0,0,1,0,1,0,0,0,0,0)\\
=\ell_0^{12}(1,1,0,0,1,1,1,0,0,0,0,1)=\ell_0^{12}(1,1,0,1,0,0,0,0,0,1,1,1)=\ell_0^{12}(1,1,0,1,0,0,0,1,1,0,0,1)\\
=\ell_0^{12}(1,1,0,1,0,0,1,0,1,0,0,1)=\ell_0^{12}(1,1,0,1,0,1,0,0,0,0,0,0)=\ell_0^{12}(1,1,1,0,0,0,0,1,0,0,0,0)\\
=\ell_0^{12}(1,1,1,0,0,0,1,0,0,0,1,1)=\ell_0^{12}(1,1,1,0,0,0,1,0,1,0,0,1)=\ell_0^{12}(1,1,1,0,0,1,0,0,0,1,0,1)\\
=\ell_0^{12}(1,1,1,0,0,1,0,1,0,0,0,1)=\ell_0^{12}(1,1,1,0,1,0,0,0,0,0,0,1)=\ell_0^{12}(1,1,1,1,0,0,0,0,0,0,1,1)=2,
\end{align*}
\begin{align*}
\ell_0^{12}(0,0,0,0,0,0,0,1,0,0,0,1)=\ell_0^{12}(0,1,0,0,0,0,0,1,0,0,0,0)=\ell_0^{12}(0,1,0,1,0,0,0,0,0,0,1,1)\\
=\ell_0^{12}(0,1,0,1,0,1,0,0,0,0,0,1)=\ell_0^{12}(0,1,0,1,1,0,0,0,0,0,0,1)=\ell_0^{12}(1,0,0,0,0,0,0,1,0,0,0,0)\\
=\ell_0^{12}(1,1,0,0,0,0,1,1,0,1,0,1)=\ell_0^{12}(1,1,0,0,0,0,1,1,1,0,0,1)=\ell_0^{12}(1,1,1,0,0,0,1,1,0,0,0,1)=3,
\end{align*}
\begin{align*}
\ell_0^{12}(0,0,0,0,0,0,0,0,0,0,0,0)=4,
\end{align*}}
and mapping all other vertices to $0$. For a verification program showing that $\ell_0^{12}$ percolates under the $4$-meta bootstrap process in $Q_{12}$, see this \href{https://colab.research.google.com/drive/1_owaZNVjQTtzwKwdJ38Z51VGLKjKb-Fd?usp=sharing}{Google Colab Notebook} and the ancillary file \texttt{FourNeighbourBootstrapVerification.ipynb} uploaded with the arxiv version of this paper. 
\end{proof}

\section{Proofs of Main Theorems}
\label{sec:proofs}

We now present the proofs of the main theorems. 

\begin{proof}[Proof of Theorem~\ref{th:main}]
The lower bound follows from Theorem~\ref{th:MN} and so we focus on the upper bound. The cases $d=4$ and $6\leq d\leq 15$ are covered by the lemmas in Section~\ref{sec:constructionsSeed4}. So, suppose that $d\equiv 0,4\bmod 6$. We proceed by induction on $d$, where the base cases $d\in{4,6,12}$ are covered by Lemmas~\ref{lem:Q4},~\ref{lem:Q6} and~\ref{lem:Q12}, respectively. Let $d\geq16$ such that $d\equiv 0,4\bmod 6$. We divide the proof into two cases. 

\begin{casee}
$d\equiv 4\bmod 6$.
\end{casee}

By induction, there exists a percolating set $S_4^{d-4}$ of cardinality $\frac{(d-4)((d-4)^2+3(d-4)+14)}{24}+1$ for the $4$-neighbour bootstrap process in $Q_{d-4}$. Also, by Lemma~\ref{lem:r=3} and the fact that $d-4$ is divisible by $6$, there exist sets $S_1^{d-4}\subseteq S_2^{d-4}\subseteq S_3^{d-4}$ of vertices of $Q_{d-4}$ such that $|S_1^{d-4}|=1,|S_2^{d-4}|=\frac{d-4}{2}+1$ and $|S_3^{d-4}|=\frac{(d-4)(d-1)}{6}+1$ and $S_i^{d-4}$ percolates with respect to the $i$-neighbour bootstrap process on $Q_{d-4}$ for all $i\in\{1,2,3\}$. By feeding the labeling $\ell_0^4$ from Lemma~\ref{lem:Q4meta} and the sets $S_1^{d-4},S_2^{d-4},S_3^{d-4},S_4^{d-4}$ into Lemma~\ref{lem:meta}, we get that $m(Q_d;4)$ is at most
\begin{align*}
&\left(\frac{(d-4)((d-4)^2+3(d-4)+14)}{24}+1\right)+ 3\left(\frac{(d-4)(d-1)}{6}+1\right) + 3\left(\frac{d-4}{2}+1\right) + 1\\
&= \frac{d(d^2+3d+14)}{24}+1
\end{align*}
as desired.

\begin{casee}
$d\equiv 0\bmod 6$.
\end{casee}

By induction, there exists a percolating set $S_4^{d-12}$ of cardinality $\frac{(d-12)((d-12)^2+3(d-12)+14)}{24}+1$ for the $4$-neighbour bootstrap process in $Q_{d-12}$. Also, by Lemma~\ref{lem:r=3} and the fact that $d-12$ is divisible by $6$, there exist sets $S_1^{d-12}\subseteq S_2^{d-12}\subseteq S_3^{d-12}$ of vertices of $Q_{d-12}$ such that $|S_1^{d-12}|=1,|S_2^{d-12}|=\frac{d-12}{2}+1$ and $|S_3^{d-12}|=\frac{(d-12)(d-9)}{6}+1$ and $S_i^{d-12}$ percolates with respect to the $i$-neighbour bootstrap process on $Q_{d-12}$ for all $i\in\{1,2,3\}$. By feeding the labeling $\ell_0^{12}$ from Lemma~\ref{lem:Q12meta} and the sets $S_1^{d-12},S_2^{d-12},S_3^{d-12},S_4^{d-12}$ into Lemma~\ref{lem:meta}, we get that $m(Q_d;4)$ is at most
{\footnotesize
\begin{align*}
&\left(\frac{(d-12)((d-12)^2+3(d-12)+14)}{24}+1\right)+ 9\left(\frac{(d-12)(d-9)}{6}+1\right) + 33\left(\frac{d-12}{2}+1\right) + 55\\
&= \frac{d(d^2+3d+14)}{24}+1
\end{align*}}
as desired.

\end{proof}

\begin{proof}[Proof of Theorem~\ref{th:linearDiff}]
The cases $4\leq d\leq 15$ are covered by the lemmas in Section~\ref{sec:constructionsSeed4} and $d=16$ is covered by Theorem~\ref{th:main}. So, suppose that $d\geq17$. We give special attention to the case $d=17$. 

\begin{caseee}
$d=17$.
\end{caseee}

By Lemma~\ref{lem:Q13}, there exists a percolating set $S_4^{13}$ of cardinality $122$  for the $4$-neighbour bootstrap process in $Q_{13}$. Also, by Lemma~\ref{lem:r=3}, there exist sets $S_1^{13}\subseteq S_2^{13}\subseteq S_3^{13}$ of vertices of $Q_{13}$ such that $|S_1^{13}|=1,|S_2^{13}|=8$ and $|S_3^{13}|=36$ and $S_i^{13}$ percolates with respect to the $i$-neighbour bootstrap process on $Q_{13}$ for all $i\in\{1,2,3\}$. By feeding the labeling $\ell_0^{4}$ from Lemma~\ref{lem:Q4meta} and the sets $S_1^{13},S_2^{13},S_3^{13},S_4^{13}$ into Lemma~\ref{lem:meta}, we get $m(Q_{17};4)$ is at most
\[122 + 3\cdot 36 + 3\cdot 8+ 1 = 255< 272 = \left\lceil\frac{17(17^2+3\cdot 17+14)}{24}\right\rceil + 1 + 20\left\lfloor \frac{17-4}{12}\right\rfloor\]
and so the theorem is true in this case. 

\begin{caseee}
$d\geq18$.
\end{caseee}

By induction and the fact that $d-12\neq 5$, there exists a percolating set $S_4^{d-12}$ of cardinality $\left\lceil\frac{(d-12)((d-12)^2+3(d-12)+14)}{24}\right\rceil+1 +20\left\lfloor\frac{d-16}{12}\right\rfloor$  for the $4$-neighbour bootstrap process in $Q_{d-12}$. Also, by Lemma~\ref{lem:r=3}, there exist sets $S_1^{d-12}\subseteq S_2^{d-12}\subseteq S_3^{d-12}$ of vertices of $Q_{d-12}$ such that $|S_1^{d-12}|=1,|S_2^{d-12}|=\left\lceil\frac{d-12}{2}\right\rceil+1\leq \frac{d-12}{2}+\frac{1}{2}+1$ and $|S_3^{d-12}|=\left\lceil\frac{(d-12)(d-9)}{6}\right\rceil+1\leq \frac{(d-12)(d-9)}{6}+\frac{1}{3}+1$ and $S_i^{d-12}$ percolates with respect to the $i$-neighbour bootstrap process on $Q_{d-12}$ for all $i\in\{1,2,3\}$. By feeding the labeling $\ell_0^{12}$ from Lemma~\ref{lem:Q12meta} and the sets $S_1^{d-12},S_2^{d-12},S_3^{d-12},S_4^{d-12}$ into Lemma~\ref{lem:meta}, we get $m(Q_d;4)$ is at most
{\scriptsize
\begin{align*}
&\left(\left\lceil\frac{(d-12)((d-12)^2+3(d-12)+14)}{24}\right\rceil+1 + 20\left\lfloor\frac{d-16}{12}\right\rfloor\right)+ 9\left(\left\lceil\frac{(d-12)(d-9)}{6}\right\rceil+1\right) + 33\left(\left\lceil\frac{d-12}{2}\right\rceil+1\right) + 55\\
&=\Biggl\lceil\frac{(d-12)((d-12)^2+3(d-12)+14)}{24} + 9\left(\left\lceil\frac{(d-12)(d-9)}{6}\right\rceil+1\right) + 33\left(\left\lceil\frac{d-12}{2}\right\rceil+1\right) + 55\Biggr\rceil+1 + 20\left\lfloor\frac{d-16}{12}\right\rfloor\\
&\leq  \left\lceil\frac{d(d^2+3d+14)}{24} + 3 + \frac{33}{2}\right\rceil+1+20\left\lfloor\frac{d-16}{12}\right\rfloor\\
&\leq  \left\lceil\frac{d(d^2+3d+14)}{24}\right\rceil+\frac{81}{4}+1+20\left\lfloor\frac{d-16}{12}\right\rfloor
\end{align*}}
which, since $m(Q_d;4)$ is an integer, implies the result. 
\end{proof}

\section{Conclusion}

While we have only been able to obtain an exact result for some values of $d$, we have little doubt that Theorem~\ref{th:main} extends to all $d\geq4$, except for the case $d=5$ which was ruled out by exhaustive computer search by Morrison and Noel~\cite[Section~6]{MorrisonNoel18}.

\begin{conj}
For all $d\geq4$ with $d\neq5$, it holds that
\[m(Q_d; 4) = \left\lceil\frac{d(d^2+3d+14)}{24}\right\rceil+1.\]
\end{conj}

More generally, Morrison and Noel~\cite{MorrisonNoel18} ask whether, for every $r\geq1$, there exists $d_0(r)$ such that Theorem~\ref{th:MN} is tight for all $d\geq d_0(r)$. We believe that the answer is probably ``yes.'' However, verifying this for general $r$ will most likely require a new strategy for finding constructions which is not so heavily reliant on computer assistance. 

\begin{ack}
The author would like to thank the Google DeepMind AlphaEvolve team for the opportunity to participate as a Trusted Tester with early access to the system, and for their continued support throughout this project. The author is especially grateful to Adam Zsolt Wagner, who was the primary point of contact and provided extremely helpful guidance on the platform, and who ran a version of one of my experiments with extended CPU time. The author would also like to thank Natasha Morrison for valuable conversations related to the topic of this paper. 
\end{ack}

\bibliographystyle{plain}
\bibliography{bootstrap}
\end{document}